\documentclass[10pt]{amsart}
\usepackage[T1]{fontenc}
\usepackage[latin1]{inputenc}
\usepackage{xypic,array}
\usepackage{amsfonts}
\usepackage{amsmath}
\usepackage{amssymb}
\usepackage{amsthm}
\usepackage{amssymb}
\usepackage{setspace}
\usepackage{tikz}
\usepackage{booktabs}
\usepackage{hyperref}
\usepackage{longtable}
\usepackage{geometry}
\usetikzlibrary{trees}
\usepackage{amsfonts}
\usepackage{amsmath}
\usepackage{epsfig}
\usepackage{color}
\usepackage{mathrsfs}
\usepackage{latexsym}
\usepackage{textcomp}
\usetikzlibrary{trees}
\usepackage{marginnote}
\usepackage{multirow}

\usepackage{fancyvrb} 

\newtheorem{Theorem}{Theorem}[section]
\newtheorem{mthm}{Main Theorem}
\newtheorem{Lemma}[Theorem]{Lemma}
\newtheorem{Prop}[Theorem]{Proposition}
\newtheorem{Def}[Theorem]{Definition}
\newtheorem{Cor}[Theorem]{Corollary}
\newtheorem{Ex}[Theorem]{Example}

\newtheorem{Obs}[Theorem]{Remark}

\setcounter{page}{1}
\linespread{1.1}

\newcommand{\quotf}{{\mathcal Q}{\it uot}}

\newcommand{\sch}{\mathfrak{Sch}_{/\C}}
\newcommand{\sets}{\mathfrak{Set}}

\def\C{{\mathbb{C}}}

\def\P{{\mathbb{P}}}

\def\Z{{\mathbb{Z}}}
\def\sF{{\mathscr{F}}}
\def\sD{{\mathscr{D}}}

\def\G{{\mathscr{G}}}

\def\O{{\mathcal{O}_{\mathbb{P}^3}}}
\def\OZ{{\mathcal{O}_{Z}}}
\def\IZ{{\mathcal{I}_{Z}}}
\def\OC{{\mathcal{O}_{C}}}
\def\IC{{\mathcal{I}_{C}}}

\def\OX{{\mathcal{O}_{X}}}

\def\G{{\mathscr{G}}}

\def\EE{{\mathcal{E}}}

\def\O{{\mathcal{O}_{\mathbb{P}^3}}}

\def\OX{{\mathcal{O}_{X}}}
\def\OZ{{\mathcal{O}_{Z}}}


\newcommand{\op}[1]{{\mathcal O}_{\mathbb{P}^{#1}}}
\newcommand{\p}[1]{{\mathbb{P}^{#1}}}
\newcommand{\tp}[1]{{\rm T}{\mathbb{P}^{#1}}}
\DeclareMathOperator{\rk}{{rk}}
\DeclareMathOperator{\coker}{{coker}}
\DeclareMathOperator{\ext}{{Ext}}
\DeclareMathOperator{\Hom}{{Hom}}
\newcommand{\inext}{{\mathcal E}{\it xt}}

\DeclareMathOperator{\sing}{Sing}

\DeclareMathOperator{\img}{{im}}

\DeclareMathOperator{\Pic}{{Pic}}

\title[ Classification of the invariants of foliations by curves of low degree on $\mathbb{P}^3$]
{Classification of the invariants of foliations by curves of low degree on the three-dimensional projective space}

\author[M. Corr\^ea]{M. Corr\^ea}
\thanks{ }
\dedicatory{}
\address{
Universit\`a degli Studi di Bari, 
Via E. Orabona 4, I-70125, Bari, Italy
}
\email{mauricio.barros@uniba.it}

\author[  M. Jardim]{M. Jardim}
\thanks{ }
\dedicatory{}
\address{IMECC - UNICAMP \\ Departamento de Matem\'atica \\
Rua S\'ergio  Buarque de Holanda, 651\\ 13083-970 Campinas-SP, Brazil}
\email{jardim@ime.unicamp.br}

\author[S.  Marchesi]{S.  Marchesi}
\thanks{}
\dedicatory{}
 \address{ Universitat de Barcelona \\
Facultat de Matem\`atiques i Inform\`atica
 \\
Gran Via de les 
Corts Catalanes 585
\\ 08007, Barcelona, 
Spain}
\address{Centre de Recerca Matem\`atica, Edifici C, Campus Bellaterra, 08193 Bellaterra, Spain}
\email{marchesi@ub.edu}
\dedicatory{}

\email{ }
\keywords{}
\subjclass{}
\date{}
\begin{document}

\begin{abstract}
We study foliations by curves on the three-dimensional projective space with no isolated singularities, which is equivalent to assuming that the conormal sheaf is locally free.
We provide a classification of the topological and algebraic invariants of the conormal sheaves and singular schemes for such foliations by curves, up to degree 3.  In particular, we prove that  foliations by curves of degree 1 or 2 are contained in a pencil of planes or are Legendrian, and are given by the complete intersection of two codimension one distributions. Furthermore, we prove that the conormal sheaf of a foliation by curves of degree 3 with reduced singular scheme either splits as a sum of line bundles or is an instanton bundle. For degree larger than 3, we focus on two classes of foliations by curves, namely Legendrian foliations and those whose conormal sheaf is a twisted null-correlation bundle. We give characterizations of such foliations, describe their singular schemes and their moduli spaces.
\end{abstract}

\maketitle
\tableofcontents
\bigskip

\section{Introduction}
The qualitative study of polynomial differential equations was initiated in classical works by  Poincar\'e, Darboux, and  Painlev\'e, see for instance \cite{Darboux,Pain,Poincare}. In the modern terminology, these works provided us with study of holomorphic  foliations by curves on $\mathbb{P}^2$ by analyzing their possible algebraic leaves.

Since then, classifications of codimension one foliations on higher dimensional projective spaces have also been obtained. To be more precise, Jouanolou classified codimension one foliations of degrees 0 and 1 in \cite{J}; Cerveau and Lins Neto showed in \cite{CL} that there exist six irreducible components of foliations of degree 2 on projective spaces and   in   \cite{CL3}  they  proved that foliations of degree three are either transversely affine foliations, or are rational pullbacks of foliations on $\mathbb{P}^2$. 

Recently, the authors of \cite{CCJ} initiated a systematic study of codimension one holomorphic distributions on $\mathbb{P}^3$, analyzing the properties of their singular schemes and tangent sheaves. In particular, a classification of codimension one distributions of degree at most 2 with locally free tangent sheaves was provided, together with a description of the geometry of certain the moduli space of distributions. 

By contrast, classifications for  distributions of codimension two are not widely known.  Since a distribution of codimention two is generated by a single twisted vector field, it is automatically integrable. In addition, its leaves are 1-dimensional complex submanifolds, hence it is called a \textit{foliations by curves}. This is a misnomer, though, since the leaves are not, in general, algebraic curves. 

Note that generic foliations by curves, understood as a twisted holomorphic vector fields, have only isolated singularities, and the challenge is to understand foliations by curves with non-isolated singularities. In this work we are interested on the class of non-generic foliations whose singular schemes are of pure dimension one. We say that such foliations are of {\it local complete intersection type}, since they are given by twisted 2-forms which are locally decomposable even along their singular scheme, i.e, the foliation is given locally by the intersection of two codimension one distributions. This also means that the conormal sheaf is locally free, see Lemma \ref{n-loc free}. Therefore, in order to provide a clasification of the topological and algebraic invariants of locally complete intersection foliations it is sufficient to describe the geometry of their conormal bundles and their singular schemes in the spirit of \cite{CCJ}.  Analogoulsy, we say that a foliation is of {\it global complete intersection type} if it is defined globally as the intersection of two codimension one distributions.

 We present a general theory of foliations by curves on non-singular projective varieties in Section \ref{sec:basics}. Next, we construct the moduli space of foliations by curves in Section \ref{sec:moduli}, proving certain criteria that will later allow us to check whether certain specific moduli spaces of foliations by curves are irreducible and smooth; in particular, see Theorem \ref{quot-thm} and Lemma \ref{lemma-moduli}.

 We then focus on foliations by curves on $\p3$; these are given by short exact sequences of the form
$$ \sF ~:~ 0 \longrightarrow N_\sF^* \longrightarrow \Omega^1_{\p3} \stackrel{\phi^\vee}{\longrightarrow} \IZ(d-1) \longrightarrow 0, $$
where $N_\sF^*$ is the conormal sheaf and $Z$ is the singular scheme of $\sF$; the integer $d\ge0$ is the degree of $\sF$. As mentioned above, $\sF$ is of global complete intersection type if and only if $N_\sF^*$ is locally free, if and only if $Z$ has pure dimension 1. Under this hypothesis, one can show (see Lemma \ref{bound c2}) that
$$ d+2\le c_2(N_\sF^*) \le d^2+2d+1. $$
The \textit{topological classification} of foliations by curves of a given degree $d$ is to determine the integers $c$ in the interval above for which there exists a foliation $\sF$ of degree $d$ such that $c_2(N_\sF^*)=c$, and then characterize the conormal sheaf $N_\sF^*$ and the singular scheme $Z$. A \textit{full classification} of foliations by curves of a given degree $d$ is achieved by describing the irreducible components of its moduli space.

 Our first goal is to provide the topological classification of locally complete intersection foliations by curves of degree at most 3 on $\P^3$.
Since there are no locally complete intersection foliations by curves of degree 0 (see the first paragraph of Section \ref{sec:deg12}), our study starts in degrees 1 and 2. In these cases, it turns out that locally complete intersection foliations are actually globally complete intersection, and these can be fully classified using \cite[Theorem 5.1]{CJV} (reformulated as Theorem \ref{determi} below). Therefore, a full classification of locally complete intersection foliations by curves of degrees 1 and 2 is achieved by showing that the moduli spaces of such foliations are irreducible quasi-projective varieties, and their dimensions are computed.


We remark that in all statements below by a \emph{curve} we mean a closed subscheme of $\p3$ of pure dimension 1  (keep in mind, however, that the leaves of a foliation by curves are not curves in this sense, as observed above).

\begin{mthm}\label{classification-degree12}
Let $\sF$ be a foliation by curves on $\P^3$ of degree $d\in\{1,2\}$. If $\sing(\sF)$ is a  curve, then its conormal sheaf $N_\sF^*$  splits as a sum of line bundles. More precisely, we have that 
\begin{enumerate}
\item if $d=1$, then $N_\sF^*\simeq\O(-2)^{\oplus 2}$ and $\sing(\sF)$ consists of two disjoint lines;
\item if $d=2$, then $N_\sF^*\simeq\O(-2)\oplus\O(-3)$
and $\sing(\sF)$ is a connected curve of degree $5$ and arithmetic
genus $1$;
\item 
  if   $\mathscr{F}'$ is a foliation by curves  on  $\mathbb{P}^3$, with   degree  $d\in\{1,2\}$, such that $\sing(\mathscr{F}) \subset \sing(\mathscr{F}') $, then $ \mathscr{F}'=\mathscr{F}$.
\end{enumerate}
In particular, $\sF$ is contained in a pencil of planes or is Legendrian, and it is given by the complete intersection of two codimension one distributions.

In addition, the moduli space of locally complete intersection foliations of degree $d\in\{1,2\}$ is an irreducible quasi-projective variety of dimension 8 if $d=1$, and dimension $20$  if $d=2$.
\end{mthm}

 Our next result provides the topological classification of locally complete intersection foliations by curves of degree 3; we find examples of foliations with conormal sheaves that do not split as a sum of line bundles. Recall that an \emph{instanton bundle} on $\p3$ is a stable rank 2 locally free sheaf $E$ satisfying $h^1(E(-2))=0$; $c_2(E)$ is called the \emph{charge} of $E$. Moreover, $E$ is said to be a \emph{'t Hooft} instanton bundle if, $h^0(E(1))\ge1$, and a \emph{special 't Hooft} instanton bundle if, in addition, $h^0(E(1))=2$, see \cite{HN}. 

\begin{mthm}\label{classification-degree3} 
Let $\sF$ be a foliation by curves on $\P^3$ of degree $3$. If $\sF$ is of local complete intersection type, then one of the following possibilities hold:
\begin{enumerate}
\item $N_\sF^*=\O(-2)\oplus\O(-4)$, and $\sing(\sF)$ is a connected curve of degree $10$ and arithmetic genus $5$;
\item $N_\sF^*=\O(-3)^{\oplus 2}$, and $\sing(\sF)$ is a connected curve of degree $9$ and arithmetic genus $3$; 
\item $N_\sF^*=E(-3)$, where $E$ is stable rank 2 locally free sheaf with $c_1(E)=0$ and $1 \leq c_2(E) \leq 5$; the singular scheme $\sing(\sF)$ is a curve of degree $9-c_2(E)$ and arithmetic genus $p_a(C)=8-3c_2(E)$.
\end{enumerate}
If, in addition, $\sing(\sF)$ is reduced, then $1 \leq c_2(E) \leq 4$, $E$ is an instanton bundle (though not a special 't Hooft instanton bundle of charge 3 or a 't Hooft instanton bundle of charge 4), and $\sing(\sF)$ is connected if and only if $c_2(E)=1,2$.
 
Conversely, for each $n\in\{1,2,3,4,5\}$, there is foliation by curves $\sF$ of degree 3 on $\p3$ such that $N_\sF^*(3)$ is an instanton bundle of charge $n$. 
\end{mthm}

We observe that \cite[Theorem 5.1]{CJV} states that a globally complete intersection foliation by curves in $\P^3$ of degree $d\geq 1$ is completely determined by its conormal sheaf and singular scheme, see also Theorem \ref{determi} below. In light of the stated results, we have that a locally complete intersection foliation of degree 1 or 2 is determined by its singular scheme. On the other hand, there are locally complete intersection foliations of degree 3 which are not globally complete intersections, and these are not determined by their singular schemes, as it is illustrated in Example \ref{example5lines}.  

One consequence of our  topological classification is that degree and genus of the singular scheme are not enough to distinguish the possible foliations of local complete intersection type which are not of global complete intersections. The new invariant that comes into play is the cohomology module of the conormal sheaf
$$ M_{\sF}:= H^1_*(N_\sF^*):=\bigoplus_{p\in\Z} H^1(N_\sF^*(p)), $$
regarded as a graded $\C[x_0,x_1,x_2,x_3]$-module,  where $x_0,x_1,x_2,x_3$ are homogeneous
coordinates on $\mathbb{P}^3$. 

\begin{table}[h]
	\centering
	\begin{tabular}{ |c | c | c | c | c | c | }
		\hline
		 $(\deg(C),p_a(C)) $  &  $ c_2(N_\sF^*(3))$  & $\dim M_{\sF}$ & $h^0(\OC)$    \\
		\hline \hline
\cline{2-4}
		(8,5) & 1   & 1 &   1  \\
			\cline{2-4}
		\hline \hline
		 $(7,2)$ & $2$  &  4 & 1 \\
		\hline \hline
		\multirow{2}{0.75cm}{\centering $(6,-1)$}    &   \multirow{2}{0.75cm}{\centering $3$}     & 8 &    2 \\
		\cline{3-4}
		&    &  $ 9 $ & 3  \\ \hline \hline
	$(5,-4)$	&  $4$     & 14  &  5  \\
		\hline
	\end{tabular}
	\medskip
	\caption{ Classification of the topological and algebraic invariants of foliations of degree 3 which are not global complete intersection, with reduced singular scheme. }
	\label{table deg 3}
\end{table}
 
Regarding the existence part of Main Theorem \ref{classification-degree3}, we prove a somewhat stronger statement: every null-correlation bundle (instanton of charge  $1$) arises as the conormal sheaf, up to twist, of a foliation by curves of degree 3; see Propositions \ref{c_2=1}  below. 

Beyond degree 3, we focus on two particular classes of foliations by curves. First, we consider the so-called \emph{Legendrian foliations}; a foliation by curves is called \emph{Legendrian} if it is a sub-distribution of a contact distribution on $\P^3$, see details in Section \ref{sec:legendrian}. We prove that such foliations are globally complete intersections, and establish the following characterization.

\begin{mthm}\label{mthm3}
Every Legendrian foliation $\sF$ of degree $d$ is of the form $\omega_0\wedge\omega$, where $\omega_0$ is a contact form and $\omega\in H^0(\Omega^1_{\p3}(d+1))$. In addition, the moduli space of the Legendrian foliations of degree $d$ is an irreducible quasi-projective variety of dimension
$$d\cdot\binom{d+3}{2} - \binom{d+2}{3}+4\:\:\mbox{ if } \:\: d \geq 2$$
and of dimension 8 if $d=1$.
\end{mthm}

We remark that there exists only one contact form $\omega_0\in H^0(\Omega^1_{\p3}(2))$,  up to automorphism of $\p3$.

Finally, we consider those locally complete intersection foliations by curves whose conormal sheaf is a twisted null-correlation bundle, which is the simplest rank 2 locally free sheaf on $\P^3$ which does not split as a sum of line bundles.

These foliations can also be regarded as the simplest locally complete intersections foliations which are not globally complete intersections from an algebraic point of view. Indeed, one important algebraic invariant of a foliation is the \emph{Rao module} of its singular scheme $Z$, namely the graded $\C[x_0,x_1,x_2,x_3]$-module
which is closely related with the graded module $H^1_*(N_\sF^*)$ mentioned above. One can prove that the conormal sheaf of a foliation by curves splits as a sum of line bundles if and only if the Rao module of singular scheme is 1-dimensional over $\C$, see \cite[Theorem 2]{CJV}. Our last main result shows that the foliations by curves whose singular scheme has a 2-dimensional Rao module are precisely the ones for which the conormal sheaf is a null-correlation bundle, up to twist.

\begin{mthm}\label{mthm4}
Let $\sF$ be a foliation by curves on $\mathbb{P}^3$. The conormal sheaf is a twisted null-correlation bundle if and only if the singular scheme is a curve with 2-dimensional Rao module. Furthermore, the moduli space of foliations by curves of degree $2k+1$ $(k\ge1)$ whose conormal sheaf is a twisted null-correlation bundle is an irreducible quasi-projective variety of dimension 
$$ 8 \binom{k+4}{3} - 2\binom{k+5}{3} -3k - 3. $$
The singular scheme of such foliations is always connected, and also smooth for a generic one.
\end{mthm}

\subsection*{Acknowledgments}
We would like to thank Israel Vainsencher and Alan Muniz for useful discussions. 
MC was supported by the CNPQ grants number 202374/2018-1, 302075/2015-1, and 400821/2016-8; he was also supported by the FAPESP grant number 2015/20841-5; he is supported by the 
Prin 2022 Interactions between Geometric Structures and Function Theories;  he is grateful to Universidade Estadual de Campinas for its hospitality; he would like to thank the Universidade  Federal de Minas Gerais, the institution he was affiliated with when this work started. 
MJ is supported by the CNPQ grant number 302889/2018-3 and the FAPESP Thematic Project 2018/21391-1. 
SM was partially supported by FAPESP grant number 2019/08279-0, by CNPQ grant number 303075/2017-1, by PID2020-113674GB-I00, and by the Spanish State Research Agency, through the Severo Ochoa and Mar\'ia de Maeztu Program for Centers and Units of Excellence in R$\&$D (CEX2020-001084-M) and is a member of INdAM-GNSAGA. SM would like to thank the Universidade Estadual de Campinas, the institution he was affiliated with when this work started. This work was also partially funded by CAPES - Finance Code 001. 


\section{Foliations by curves}\label{sec:basics}

Let $X$ be a nonsingular projective variety of dimension $n$. Recall that a \emph{codimension $r$ distribution} $\sF$ on $X$ is given by an exact sequence
\begin{equation}\label{eq:Dist}
\mathscr{F}:\  0  \longrightarrow T_\sF \stackrel{\phi}{ \longrightarrow} TX \stackrel{\pi}{ \longrightarrow} N_{\sF}  \longrightarrow 0,
\end{equation}
where $T_\sF$ is a coherent sheaf of rank $s:=n-r$, and $N_{\sF}$ is a torsion free sheaf. The sheaves $T_\sF$ and $N_{\sF}$ are called the \emph{tangent} and the \emph{normal} sheaves of $\mathscr{F}$, respectively. Note that $T_\sF$ must be reflexive \cite[Proposition 1.1]{H2}.

The \emph{singular scheme} of $\mathscr{F}$ is defined as follows. Taking the maximal exterior power of the dual morphism $\phi^\vee:\Omega^1_X\to T_\sF^*$ we obtain a morphism $\Omega^{s}_X\to \det(T_\sF)^*$;  the image of the induced   morphism $\Omega^{s}_X\otimes \det(T_\sF)\to \mathcal{O}_X$ is an ideal sheaf $\IZ$ of a subscheme $Z\subset X$, which is called the singular scheme of $\mathscr{F}$. 

Finally, we introduce the notion of integrability. A \emph{foliation} is an integrable distribution, which means a distribution whose tangent sheaf is closed under the Lie bracket of vector fields, that is $[\phi(T_\sF),\phi(T_\sF)]\subset \phi(T_\sF)$.

In this paper we focus on the case $r=n-1$. Clearly, every distribution of codimension $n-1$ is integrable, and it is called a \emph{foliation by curves}. In addition, $T_\sF$ must be a line bundle on $X$, which we denote by $\mathscr{L}$ from now on, while the normal sheaf $N_\sF$ is a torsion-free sheaf of rank $n-1$. Therefore, a foliation by curves is simply given by a nontrivial section $\phi\in H^0(TX\otimes\mathscr{L}^*)$, whose cokernel is a torsion free sheaf. 

Dualizing the sequence \eqref{eq:Dist}, we obtain
\begin{equation}\label{dual dist}
0 \to N_\sF^* \to \Omega_X^1 \stackrel{\phi^\vee}{\rightarrow} \mathscr{L}^* \to \inext^1(N_\sF,\OX) \to 0,
\end{equation}
thus $\inext^1(N_\sF,\OX)\simeq \OZ\otimes\mathscr{L}^*$, where $Z$ is the singular scheme of $\sF$. In other words, the singular set of $N_\sF$ coincides with the singular locus of $\sF$ as a set, which can also be regarded as the vanishing locus of $\phi$ as a section in  $H^0(TX\otimes\mathscr{L}^*)$. We also conclude that $\inext^p(N_\sF,\OX)=0$ for $p\ge 2$.

Cutting sequence \eqref{dual dist}, we obtain the following short exact sequence
\begin{equation}\label{pfaff}
0 \to N_\sF^* \to \Omega_X^1 \stackrel{\phi^\vee}{\rightarrow} \IZ\otimes\mathscr{L}^* \to 0,
\end{equation}
which will play an important role in this paper; the sheaf $N_\sF^*$ is called the \emph{conormal sheaf} of the foliation $\sF$.  

Conversely, we dualize the sequence in display \eqref{pfaff} to obtain
\begin{equation}\label{dual pfaff}
0 \to \mathscr{L} \stackrel{\phi}{\rightarrow} TX \to N_\sF^{**} \to \inext^1(\IZ,\OX)\otimes\mathscr{L}
\to 0.
\end{equation}
Since $N_\sF=\coker\phi$ by definition, we recover the original foliation by curves
$$ 0 \to \mathscr{L} \stackrel{\phi}{\rightarrow} TX \to N_\sF \to 0, $$
and conclude that
\begin{equation}\label{n-ndd}
0 \to N_\sF \to N_\sF^{**} \to \inext^2(\OZ,\OX)\otimes\mathscr{L} \to 0.
\end{equation}

Note that $Z$ might not be pure dimensional; let $R$ be the maximal subsheaf of $\OZ$ of codimension greater than $2$; the quotient $\OZ/R$ is the structure sheaf of a (possibly empty) scheme of pure codimension 2, which we denote by $C$. These facts are described in the short exact sequence
\begin{equation} \label{sqc:OC}
0 \to R \to \OZ \to \OC \to 0 . 
\end{equation}
It follows that $\inext^2(\OZ,\OX)\simeq \inext^2(\OC,\OX)=\omega_C\otimes\omega_X$, where $\omega_C$ and $\omega_X$ are the dualizing sheaves, and $\inext^p(\OZ,\OX)\simeq \inext^p(R,\OX)$ for $p\ge3$. This observation has {\color{cyan}two} interesting consequences. First, the sequence in display \eqref{n-ndd} can be rewritten in the following manner
\begin{equation}\label{n-ndd2}
0 \to N_\sF \to N_\sF^{**} \to \omega_C\otimes\omega_X\otimes\mathscr{L} \to 0.
\end{equation}

 Furthermore, we obtain the characterization stated in the following lemma.

\begin{Lemma}\label{n-loc free}
Let $\sF$ be a foliation by curves on a smooth projective variety $X$. $N_\sF^*$ is locally free if and only if its singular locus has pure codimension 2.
\end{Lemma}

\begin{proof}
The dualization of the sequence in display \eqref{pfaff} also leads to the isomorphisms
$$ \inext^p(N_\sF^{*},\OX) \simeq \inext^{p+1}(\IZ,\OX)\otimes\mathscr{L}\simeq \inext^{p+2}(R,\OX)\otimes\mathscr{L}. $$
The rightmost sheaf vanishes for all $p\ge1$ if and only $R=0$, which is equivalent to saying that $Z$ has pure codimension 2.
\end{proof}


\section{Moduli spaces of foliations by curves}\label{sec:moduli}

In \cite[Section 2.3]{CCJ} the authors developed a general construction of the moduli spaces of distributions on projetive varieties. The present section is dedicated to a slightly different \emph{dual} construction more suitable to understand foliations by curves. 

The set of all foliations by curves with a fixed line bundle $\mathscr{L}$ as tangent sheaf is simply the open subset  of \emph{saturated} sections $\phi\in H^0(TX\otimes\mathscr{L}^*)$, that is
$$ H^0(TX\otimes\mathscr{L}^*)^{\rm sat} := \{ \phi\in H^0(TX\otimes\mathscr{L}^*) ~|~ \coker\phi \textrm{ is torsion free } \}. $$
This set can be stratified according with the Hilbert polynomial of the vanishing locus of $\phi$. With this in mind, let $P$ be a fixed polynomial of degree at most $\dim X-2$; we define the set
$$ \mathcal{D}^P_{\mathscr{L}} :=  \{ \phi\in H^0(TX\otimes\mathscr{L}^*)^{\rm sat} ~|~ P_{\mathcal{O}_{Z_\phi}\otimes\mathscr{L}^\vee}(t)=P \}, $$
where $Z_\phi$ is the vanishing locus of $\phi$, that is  $\mathcal{O}_{Z_\phi}=\coker \phi^\vee $. Note that $\mathcal{D}^P_{\mathscr{L}}$ can be regarded as a locally closed subscheme of $H^0(TX\otimes\mathscr{L}^*)$. Here, $P_F(t)$ denotes the Hilbert polynomial of the sheaf $F$ on $X$. 

The set $\mathcal{D}^P_\mathscr{L}$ can be given an alternative description in terms of the Grothendieck quot-scheme for the cotangent bundle $\Omega^1_X$. Let us briefly recall its definition, using \cite[Section 2.2]{HL} as main reference.

Let $\sch$ denote the category of schemes of finite type over $\C$, and $\sets$ be the category of sets. Fix a polynomial $P\in\mathbb{Q}[t]$, and consider the functor
$$ \quotf^P : \sch^{\rm op} \to \sets ~~,~~ \quotf^P(S) := \{(N,\eta)\}/\sim $$
where
\begin{itemize}
\item[(i)] $N$ is a coherent sheaf of $\mathcal{O}_{X\times S}$-modules, flat over $S$, such that the Hilbert polynomial of $N_s:=N|_{X\times\{s\}}$ is equal to $P$ for every $s\in S$;
\item[(ii)] $\eta:\pi^*_X \Omega^1_X \to N$ is an epimorphism, where $\pi_X:X\times S\to X$ is the standard projection onto the first factor.
\end{itemize}
In addition, we say that $(N,\eta)\sim(N',\eta')$ if there exists an isomorphism $\gamma:N\to N'$ such that $\gamma\circ\eta=\eta'$.

Finally, if $f:R\to S$ is a morphism in $\sch$, we define $\quotf^P(f):\quotf^P(S)\to\quotf^P(R)$ by
$(N,\eta) \mapsto (f^*N,f^*\eta)$. Elements of the set $\quotf^P(S)$ will be denoted by $[N,\eta]$.

Let us recall the following result, which is just an adaptation of \cite[Theorem 2.2.4 and Proposition 2.2.8]{HL} suitable for our purposes.

\begin{Theorem}\label{quot-thm}
The functor $\quotf^P$ is represented by a projective scheme $\mathcal{Q}^P$ of finite type over $\C$, that is, there exists an isomorphism of functors $\quotf^P \stackrel{\sim}{\longrightarrow} \Hom(\cdot,\mathcal{Q}^P)$. 
In addition, if \linebreak $\ext^1(\ker\eta,N)=0$, then $\mathcal{Q}^P$ is smooth at a point $[N,\eta]$, and $\dim T_{[N,\eta]}\mathcal{Q}^P = \dim\Hom(\ker\eta,N)$.
\end{Theorem}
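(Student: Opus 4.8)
The plan is to recognize $\quotf^P$ as the Grothendieck Quot functor associated to the fixed coherent sheaf $\Omega^1_X$ on the nonsingular projective variety $X$ (where $\Omega^1_X$ is in fact locally free), and then to invoke the standard representability and deformation theory of Quot schemes as developed in \cite[Section 2.2]{HL}, the whole statement being an adaptation of the cited proposition to this ``dual'' setting in which one takes quotients of $\Omega^1_X$ rather than of a free sheaf.

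For representability, I would first establish boundedness of the family of quotients $\eta:\Omega^1_X\twoheadrightarrow N$ with fixed Hilbert polynomial $P$, producing an integer $m_0$ such that for every such quotient and every $m\ge m_0$ both $N(m)$ and $(\ker\eta)(m)$ are $m$-regular, globally generated, and have vanishing higher cohomology. Applying $\pi_{X*}(\,\cdot\,(m))$ to a flat family over a base $S$ then yields a surjection from the fixed vector space $H^0(\Omega^1_X(m))$ onto a locally free $\mathcal{O}_S$-module of rank $P(m)$, giving a natural transformation from $\quotf^P$ to the Grassmannian functor of $P(m)$-dimensional quotients of $H^0(\Omega^1_X(m))$. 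This transformation identifies $\quotf^P$ with a subfunctor represented by a closed subscheme $\mathcal{Q}^P\subset\mathrm{Grass}(H^0(\Omega^1_X(m)),P(m))$, cut out by the requirement that the universal quotient remain flat with Hilbert polynomial $P$ after composing with the evaluation map; this is where the flattening stratification and the closedness of the regularity conditions enter. In particular $\mathcal{Q}^P$ is projective and of finite type over $\C$, and the universal quotient on $X\times\mathcal{Q}^P$ furnishes the isomorphism $\quotf^P\xrightarrow{\sim}\Hom(\cdot,\mathcal{Q}^P)$, with the equivalence relation $\sim$ matching the identifications built into the Grassmannian construction.

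For the smoothness statement, I would run the usual tangent–obstruction argument. Fixing a point $[N,\eta]$ and setting $K:=\ker\eta$, so that $0\to K\to\Omega^1_X\xrightarrow{\eta}N\to0$, first-order deformations of this quotient over $\mathrm{Spec}\,\C[\epsilon]/(\epsilon^2)$ are $\C[\epsilon]$-flat quotients of $\Omega^1_X\otimes\C[\epsilon]$ restricting to $\eta$; such data are canonically parametrized by $\Hom(K,N)$, measuring the infinitesimal motion of $K$ inside $\Omega^1_X$. This gives $T_{[N,\eta]}\mathcal{Q}^P\cong\Hom(K,N)$. The obstruction to extending a deformation across a square-zero extension lives in $\ext^1(K,N)$; hence the hypothesis $\ext^1(K,N)=0$ forces every obstruction to vanish, the deformation functor is unobstructed, and $\mathcal{Q}^P$ is smooth at $[N,\eta]$ with $\dim T_{[N,\eta]}\mathcal{Q}^P=\dim\Hom(K,N)$.

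Since representability is essentially a citation of the Grothendieck construction, the step I expect to require the most care is the deformation-theoretic second assertion: one must check, compatibly with the relation $\sim$ defining $\quotf^P$, that the tangent space is canonically $\Hom(K,N)$ and that the obstruction genuinely lands in $\ext^1(K,N)$, which is the standard tangent–obstruction theory of Quot schemes with $\Omega^1_X$ in the role of the fixed ambient sheaf. Granting this, the smoothness criterion and the dimension count follow immediately from the vanishing of $\ext^1(K,N)$.
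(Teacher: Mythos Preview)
Your proposal is correct and matches the paper's treatment: the paper does not supply its own argument but simply records the statement as ``an adaptation of \cite[Proposition 2.2.8]{HL} suitable for our purposes,'' and what you sketch is precisely that standard Grothendieck construction (boundedness, embedding into a Grassmannian via $m$-regularity, flattening stratification) together with the usual tangent--obstruction theory $T_{[N,\eta]}\mathcal{Q}^P\cong\Hom(\ker\eta,N)$ with obstructions in $\ext^1(\ker\eta,N)$. There is nothing to compare beyond noting that you have written out what the paper merely cites.
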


We assume from now on that $\Pic(X)=\mathbb{Z}\cdot\OX(1)$, so that the isomorphism class of a line bundle on $X$ is uniquely determined  by an integer called its degree in the sequel; for each $d\in\Z$, let $P_d:=P_{\OX(d)}$ and set $\mathcal{D}^P_{d}:=\mathcal{D}^P_{\OX(d)}$. 

We argue that there is a set theoretical bijection between $\mathcal{D}^P_{-d}$ and the open subset of $\mathcal{Q}^{P'}$, where $P':=P_{d}-P$, consisting of those pairs $[L,\eta]\in\mathcal{Q}^{P'}$ such that $L$ is a (rank 1) torsion free sheaf. Indeed, take $\phi\in\mathcal{D}^P_{-d}$; the sheaf $\img(\phi^\vee)=\IZ\otimes\OX(d)$ (compare with the exact sequence in display \eqref{pfaff}) is a quotient of $\Omega^1_X$ whose Hilbert polynomial is precisely $P'$ as above. Conversely, given a quotient $\eta:\Omega^1_X\to L$ with $L$ being a torsion free sheaf with $P_L(t)=P'$; it follows that $\rk(L)=1$, thus $L^*$ is a line bundle and $\eta^\vee\in H^0(TX\otimes L^{**})$.

From now on, we will regard the set $\mathcal{D}^P_{-d}$ as a scheme with the schematic structure inherited from the quot-scheme $\mathcal{Q}^{P'}$. 

Let $\mathcal{D}^{P,st}_{-d}$ denote the open subset of $\mathcal{D}^{P}_{-d}$ consisting of foliations by curves on $X$ whose conormal sheaf is $\mu$-stable. Let also $M^{P'}$ denote the moduli space of reflexive sheaves on $X$ with Hilbert polynomial equal to $P'$.  Following the same ideas and proofs as in \cite[Section 2.3]{CCJ}, especially Lemmas 2.5 and 2.6 there, we obtain the following statement.

\begin{Lemma}\label{lemma-moduli}
Let $X$ be a non singular projective variety of dimension $n$ with rank 1 Picard group, and let $P$ be polynomial of degree at most $n-2$. There exists a forgetful morphism
$$ \varpi:\mathcal{D}^{P,st}_{-d}\to M^{P_d-P} ~~,~~ [L,\eta] \mapsto \ker\eta, $$
sending a foliation by curves to its conormal sheaf. In addition, if $N_\sF^*=\ker\eta$ satisfies \linebreak $\ext^1(N_\sF^*,\Omega^1_X)=\ext^2(N_\sF^*,N_\sF^*)=0$, then $[L,\eta]$ is a non-singular point of $\mathcal{D}^{P,st}$, $\varpi$ is a submersion, and 
$$ \dim_{[L,\eta]}\mathcal{D}^{P,st}_{-d} = \dim \ext^1(N_\sF^*,N_\sF^*) + \dim \Hom(N_\sF^*,\Omega^1_X) -1 . $$
\end{Lemma}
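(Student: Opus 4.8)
The plan is to adapt the argument of \cite[Lemmas 2.5 and 2.6]{CCJ} to the present dual setting. First I would construct the forgetful morphism $\varpi$. By the discussion preceding the statement, $\mathcal{D}^{P,st}_{-d}$ is an open subscheme of the quot-scheme $\mathcal{Q}^{P'}$, with $P'=P_d-P$, parametrizing quotients $\eta:\Omega^1_X\to L$ with $L$ torsion free of rank $1$ and $\ker\eta$ a $\mu$-stable reflexive sheaf. Given a family of such quotients over a base $S$, the key scheme-theoretic point is that the kernel of the universal quotient is flat over $S$; this follows as in \cite{CCJ} from flatness of both $L$ and $\pi_X^*\Omega^1_X$ over $S$. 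The resulting flat family of $\mu$-stable reflexive sheaves then induces, via the corepresentability of the moduli functor, a classifying morphism $S\to M^{P_d-P}$, and the naturality of this assignment produces the morphism $\varpi$ sending $[L,\eta]\mapsto\ker\eta=N_\sF^*$.

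For the infinitesimal statements, the central tool is the exact sequence \eqref{pfaff}, that is $0\to N_\sF^*\to\Omega^1_X\to L\to 0$ with $L=\IZ\otimes\mathscr{L}^*$. Applying $\Hom(N_\sF^*,-)$ and reading off the segment $\ext^1(N_\sF^*,\Omega^1_X)\to\ext^1(N_\sF^*,L)\to\ext^2(N_\sF^*,N_\sF^*)$, the two hypotheses $\ext^1(N_\sF^*,\Omega^1_X)=\ext^2(N_\sF^*,N_\sF^*)=0$ immediately give $\ext^1(N_\sF^*,L)=0$. Applying Theorem \ref{quot-thm} with quotient $L$ and kernel $N_\sF^*$, this vanishing shows that $\mathcal{Q}^{P'}$ is smooth at $[L,\eta]$ with tangent space $\Hom(N_\sF^*,L)$; since $\mathcal{D}^{P,st}_{-d}$ is open in $\mathcal{Q}^{P'}$, the point $[L,\eta]$ is non-singular there, with the same tangent space.

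To extract the dimension formula, I would use that the vanishing of $\ext^1(N_\sF^*,\Omega^1_X)$ truncates the long exact sequence above into
$$0\to\Hom(N_\sF^*,N_\sF^*)\to\Hom(N_\sF^*,\Omega^1_X)\to\Hom(N_\sF^*,L)\to\ext^1(N_\sF^*,N_\sF^*)\to 0,$$
while $\mu$-stability of $N_\sF^*$ forces it to be simple, so $\Hom(N_\sF^*,N_\sF^*)=\C$. Taking the alternating sum of dimensions yields
$$\dim_{[L,\eta]}\mathcal{D}^{P,st}_{-d}=\dim\Hom(N_\sF^*,L)=\dim\ext^1(N_\sF^*,N_\sF^*)+\dim\Hom(N_\sF^*,\Omega^1_X)-1,$$
as claimed. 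Finally, for the submersion assertion I would identify the differential of $\varpi$ at $[L,\eta]$ with the connecting map $\Hom(N_\sF^*,L)\to\ext^1(N_\sF^*,N_\sF^*)$ of the same sequence: a first-order deformation of the quotient $\eta$ deforms its kernel, and its Kodaira--Spencer class is exactly this image. Since the target is $T_{N_\sF^*}M^{P_d-P}=\ext^1(N_\sF^*,N_\sF^*)$ (here the obstruction space $\ext^2(N_\sF^*,N_\sF^*)$ vanishes, so $M^{P_d-P}$ is itself smooth at $N_\sF^*$), and since this map is surjective precisely because $\ext^1(N_\sF^*,\Omega^1_X)=0$, the morphism $\varpi$ is a submersion.

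I expect the genuine obstacle to be the scheme-theoretic part of the first paragraph --- checking flatness of the kernel in families and that the induced classifying map is an honest morphism of schemes --- together with the precise identification of $d\varpi$ with the connecting homomorphism; once \eqref{pfaff} is in hand, the vanishing deductions and the dimension bookkeeping are routine homological algebra.
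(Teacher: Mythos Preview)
Your proposal is correct and follows precisely the approach indicated by the paper, which does not give a detailed proof but simply refers to \cite[Lemmas 2.5 and 2.6]{CCJ} and says the argument is the same in this dual setting. The homological bookkeeping you outline---applying $\Hom(N_\sF^*,-)$ to the sequence \eqref{pfaff}, extracting $\ext^1(N_\sF^*,L)=0$ from the two vanishing hypotheses, invoking Theorem \ref{quot-thm}, using simplicity of the stable sheaf $N_\sF^*$, and identifying $d\varpi$ with the connecting map---is exactly the content of those lemmas in \cite{CCJ}, transported to quotients of $\Omega^1_X$ rather than subsheaves of $TX$.
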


\bigskip

Note that $\dim\ext^1(N_\sF^*,N_\sF^*)$ is precisely the dimension of $M^{P'}$ at the isomorphism class of $N_\sF^*$, while $\dim \Hom(N_\sF^*,\Omega^1_X) -1$ gives the dimension of the set of monomorphisms $N_\sF^*\to \Omega^1_X$ with torsion free cokernel, up to scalar multiplication. Therefore, a family of foliations by curves of the form
$$ \sF ~:~ 0 \to N_\sF^* \to \Omega^1_X \to \IZ\otimes\OX(d) \to 0 ~~{\rm where} ~~ Z:=\sing(\sF) $$
satisfying the two vanishing conditions on the previous lemma forms an irreducible component of $\mathcal{D}^{P,st}_{-d}$, understood as the moduli space of foliations by curves with stable conormal sheaf.


\section{Foliations by curves on $\p3$}\label{basics-p3}

From now on we will only consider foliations by curves in $X=\p3$. The sequence in display \eqref{pfaff} then simplifies to
\begin{equation}\label{pfaff2}
\sF ~:~ 0 \longrightarrow N_\sF^* \longrightarrow \Omega^1_{\p3} \stackrel{\phi^\vee}{\longrightarrow} \IZ(d-1) \longrightarrow 0,
\end{equation}
with $\phi\in H^0(T\p3(d-1))$, where $d\ge0$ is called the degree of $\sF$. The sheaf $R$ 
defined by the sequence in display \eqref{sqc:OC} is a sheaf of dimension 0, while the scheme $C$ is a curve; we will often denote it by $\sing_1(\sF)$, the 1-dimensional component of the singular locus of the foliation $\sF$.

From the Euler sequence, $\phi$  is represented by a  homogeneous polynomial vector fields $\widetilde{\phi} = \sum_{i=0}^3 F_i\frac{\partial}{\partial x_i}$ with $\deg(F_i) = d$  on $\mathbb{C}^4$ and the singular scheme of $\sF$  is given by the homogeneous ideal generated by the $2\times 2$ minors of the matrix	

\begin{equation}\label{eq:minors}
		\begin{bmatrix}
			F_0 & F_1 & F_2 & F_3 \\ x_0 & x_1 & x_2 & x_3
		\end{bmatrix}.
	\end{equation}


Our first step towards a deeper understanding of foliations by curves in $\p3$ is to determine a relation between the Chern classes of the conormal sheaf and the numerical invariants of the singular scheme.

\begin{Theorem}\label{P:SLocus}
Let $\sF$ be a foliation by curves of degree $d$ with $C$ and $R$ as defined by the sequence in display \eqref{sqc:OC}. One has
\begin{itemize}
\item[(i)] $c_1(N_\sF^*)=-3-d$;
\item[(ii)] $c_2(N_\sF^*) = d^2+2d+3-\deg(C)$;
\item[(iii)] $c_3(N_\sF^*) = h^0(R) = d^3 + d^2 + d + 1 - 3\deg(C)(d-1) - 2\chi(\OC)$.
\end{itemize}
\end{Theorem}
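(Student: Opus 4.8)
The plan is to read off the three Chern classes from the defining sequence \eqref{pfaff2}
\[ 0 \to N_\sF^* \to \Omega^1_{\p3} \to \IZ(d-1) \to 0 \]
by computing the Chern character of the two outer terms. From the Euler sequence one has $c(\Omega^1_{\p3})=(1-h)^4=1-4h+6h^2-4h^3$, equivalently $\mathrm{ch}(\Omega^1_{\p3})=3-4h+2h^2-\tfrac23h^3$, where $h$ is the hyperplane class. For the quotient I would use $\mathrm{ch}(\IZ)=1-\mathrm{ch}(\OZ)$ together with the decomposition \eqref{sqc:OC}, which gives $\mathrm{ch}(\OZ)=\mathrm{ch}(\OC)+\mathrm{ch}(R)$. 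Since $R$ is $0$-dimensional, $\mathrm{ch}(R)=h^0(R)\,[\mathrm{pt}]$; and applying Riemann--Roch to the Hilbert polynomial $\chi(\OC(t))=\deg(C)\,t+\chi(\OC)$ of the curve $C$ yields $\mathrm{ch}(\OC)=\deg(C)\,[\mathrm{line}]+\big(\chi(\OC)-2\deg(C)\big)[\mathrm{pt}]$. Twisting by $e^{(d-1)h}$ then makes $\mathrm{ch}(\IZ(d-1))$ explicit.

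Since $N_\sF^*$ has rank $2$, its first two Chern classes are immediate: the degree-one part of $\mathrm{ch}(\Omega^1_{\p3})-\mathrm{ch}(\IZ(d-1))$ gives $c_1(N_\sF^*)=-4-(d-1)=-(d+3)$, which is (i); and Whitney's relation $c_2(\Omega^1_{\p3})=c_2(N_\sF^*)+c_1(N_\sF^*)\,c_1(\IZ(d-1))+c_2(\IZ(d-1))$, with $c_1(\IZ(d-1))=(d-1)h$ and $c_2(\IZ(d-1))=\deg(C)\,h^2$, gives $c_2(N_\sF^*)=d^2+2d+3-\deg(C)$, which is (ii). These are routine Chern-class manipulations.

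The essential point is (iii). Extracting the degree-three part and converting through $c_3=2\,\mathrm{ch}_3-\tfrac13 c_1^3+c_1c_2$ produces a value of $c_3(N_\sF^*)$ that is linear in \emph{both} $\chi(\OC)$ and $h^0(R)$. To pin it down I would prove independently that
\[ c_3(N_\sF^*)=h^0(R). \]
For this, recall that the dualization of \eqref{pfaff} in the proof of Lemma \ref{n-loc free} yields $\inext^1(N_\sF^*,\op3)\simeq\inext^3(R,\op3)\otimes\mathscr{L}$ and $\inext^p(N_\sF^*,\op3)=0$ for $p\ge2$; as $R$ is $0$-dimensional, Serre/local duality identifies $\inext^3(R,\op3)$ with a $0$-dimensional sheaf of length $h^0(R)$, so $\inext^1(N_\sF^*,\op3)$ has length $h^0(R)$. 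Since $N_\sF^*$ is a rank-$2$ reflexive sheaf on $\p3$, its third Chern class equals the length of $\inext^1(\,\cdot\,,\op3)$ (Hartshorne, \emph{Stable reflexive sheaves}), and the identity follows. Substituting $c_3(N_\sF^*)=h^0(R)$ into the Chern-character expression and solving for $h^0(R)$ then gives the closed form $h^0(R)=d^3+d^2+d+1-3\deg(C)(d-1)-2\chi(\OC)$, establishing (iii).

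The main obstacle is exactly this last identity $c_3(N_\sF^*)=h^0(R)$: the numerical (Riemann--Roch) computation by itself cannot separate $h^0(R)$ from $\chi(\OC)$, and one genuinely needs the reflexive-sheaf input --- the interpretation of $c_3$ as the length of the local $\inext^1$ together with the $\inext$ identification from Lemma \ref{n-loc free} --- to disentangle the two invariants and obtain both assertions of (iii). The remaining steps are standard Riemann--Roch and Whitney-formula bookkeeping.
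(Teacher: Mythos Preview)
Your approach is correct and essentially identical to the paper's: both extract the Chern classes from the exact sequence \eqref{pfaff2} via multiplicativity (you phrase it through Chern characters, the paper through the total Chern class and Whitney's formula), and both use the identity $c_3(N_\sF^*)=h^0(R)$ as the decisive extra input for (iii). Your explicit justification of this identity via the $\inext$-isomorphisms of Lemma~\ref{n-loc free} and Hartshorne's result on rank~2 reflexive sheaves is in fact more detailed than the paper, which simply asserts it.
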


We observe that in \cite{CFNV,Gil,Vai} the authors determine the number of isolated singularities under the hypothesis that $R$ is the structure sheaf of a 0-dimensional scheme disjoint from $C$.

\begin{proof}
Consider the exact sequence (\ref{pfaff2}), and use $c( \Omega_{\P^3}^1)=c(N_\sF^*)\cdot c(\IZ(d-1))$ to obtain
\begin{equation}\label{eq:IT}
\begin{array}{rll}
-4 & = & c_1(N_\sF^*)+c_1(\IZ(d-1))\\
6 & = & c_1(N_\sF^*)\cdot c_1(\IZ(d-1))+c_2(N_\sF^*)+c_2(\IZ(d-1))\\
-4 & = & c_3(N_\sF^*)+c_3(\IZ(d-1))+c_1(N_\sF^*)\cdot c_2(\IZ(d-1))+c_2(N_\sF^*)\cdot c_1(\IZ(d-1))
\end{array}
\end{equation}

The first equality gives $c_1(N_\sF^*)=-3-d$, since $c_1(\IZ(d-1))=d-1$.

Since $c_2(\IZ(d-1))=\deg(C)$, the substitution of the values of the first Chern classes in the second equation, implies
$$ c_2(N_\sF^*)= d^2+2d+3-\deg(C). $$

Moreover, the substitution from the values of the first and second Chern classes in the third equation, we obtain
\begin{equation}\label{star}
d^3+d^2+d+1+c_3(\IZ(d-1)) + c_3(N_\sF^*) - 3d\deg(C) = 0.
\end{equation}
On the other hand, we have that
\begin{equation}\label{ast1}
c_3(\IZ(d-1))=c_3(\IZ)-(d-1)\deg(C) , ~~ {\rm and}
\end{equation}
\begin{equation}\label{ast2}
c_3(\IZ) = c_3(\IC) - c_3(\mathcal{O}_R) = 4\deg(C) - 2 \chi(\OC) - 2h^0(R).
\end{equation}
The substitution of the expression in display \eqref{ast1} and \eqref{ast2} in equation \eqref{star}, together with the fact that $c_3(N_\sF^*)=h^0(R)$ leads to
$$ h^0(R)=d^3+d^2+d+1 -3\deg(C)(d-1)-2\chi(\OC), $$
as claimed.
\end{proof}

Let us analyse two extreme situations. First, if the foliation $\sF$ has only isolated singularities, that is $\OZ=R$, then $N_{\sF}$ is reflexive by Lemma \ref{n-loc free} with Chern classes 
\begin{equation}\label{eq:ising}
c_2(N_{\sF}) = d^2+2d+3 ~~~{\rm and}~~~
c_3(N_{\sF}) = d^3+d^2+d+1. 
\end{equation}

On the other hand, if $Z$ has pure dimension 1, that is $R=0$, then $N_\sF^*$ is locally free by Lemma \ref{n-loc free} and one obtains the following expressions for the degree and arithmetic genus of $C$ in terms of the degree of the distribution and the second Chern class of the conormal sheaf:
\begin{eqnarray}\label{eq-deggenus}
\label{eq:degZ} \deg(C) & = & d^2 + 2d + 3 - c_2(N_\sF^*) \\
\label{eq:gen} p_a(C) & = & d^3 + d^2 + d -3(d-1)c_2(N_\sF^*)/2 - 4.
\end{eqnarray}

\begin{Lemma}\label{bound c2}
If $\sF$ is a foliation by curves of degree $d$ on $\p3$, then
$$ d+2 \le c_2(N_\sF^*) \le d^2 + 2d + 3. $$
If, in addition, $N_\sF^*$ is locally free, then $c_2(N_\sF^*) \le d^2 + 2d + 1$.
\end{Lemma}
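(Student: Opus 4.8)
The plan is to convert the asserted bounds on $c_2(N_\sF^*)$ into bounds on $\deg(C)$ by means of Theorem \ref{P:SLocus}(ii), which gives $c_2(N_\sF^*) = d^2 + 2d + 3 - \deg(C)$. Under this substitution the two-sided inequality for $c_2$ becomes equivalent to $0 \le \deg(C) \le d^2 + d + 1$, and the locally free refinement becomes equivalent to $\deg(C) \ge 2$. The upper bound $c_2(N_\sF^*) \le d^2 + 2d + 3$ is then immediate, since $\deg(C) \ge 0$, with equality exactly when $C = \emptyset$, i.e. when $\sF$ has only isolated singularities, recovering \eqref{eq:ising}.

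For the lower bound I would restrict the defining section $\phi \in H^0(\tp3(d-1))$ to a general plane $H \cong \p2$. Using the splitting $\tp3|_H \simeq \tp2 \oplus \op{2}(1)$ coming from $\tp3|_H \simeq \tp2 \oplus N_{H/\p3}$, write $\phi|_H = (\psi, g)$ with $\psi \in H^0(\tp2(d-1))$ and $g \in H^0(\op{2}(d))$. For a general $H$ the tangential component $\psi$ has $0$-dimensional zero scheme, whose length equals
\[ c_2(\tp2(d-1)) = d^2 + d + 1. \]
Since $C \subset \{\phi = 0\}$, every point of $C \cap H$ lies in $\{\psi = 0\}$; as a general $H$ meets $C$ in exactly $\deg(C)$ distinct points, we obtain $\deg(C) \le d^2 + d + 1$, hence $c_2(N_\sF^*) \ge d + 2$.

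For the refinement when $N_\sF^*$ is locally free, Lemma \ref{n-loc free} gives $R = 0$, so $c_3(N_\sF^*) = h^0(R) = 0$ and $C$ is a nonempty curve of pure dimension one, whence $\deg(C) \ge 1$. It remains to exclude $\deg(C) = 1$. A pure one-dimensional subscheme of $\p3$ of degree $1$ is a line, so $\chi(\OC) = 1$; substituting $\deg(C) = 1$ and $\chi(\OC) = 1$ into the formula for $c_3(N_\sF^*)$ in Theorem \ref{P:SLocus}(iii) and imposing $c_3 = 0$ yields $d^3 + d^2 - 2d + 2 = 0$, which has no root with $d \ge 0$. Therefore $\deg(C) \ge 2$, giving $c_2(N_\sF^*) \le d^2 + 2d + 1$.

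The main obstacle is the genericity statement used in the lower bound, namely that for a general plane $H$ the tangential part $\psi$ of $\phi|_H$ has only isolated zeros rather than vanishing along a curve of $H$; establishing this requires a transversality argument for the plane section. Once it is in place, identifying the length of $\{\psi = 0\}$ with $c_2(\tp2(d-1))$ and comparing with the $\deg(C)$ points of $C \cap H$ is routine.
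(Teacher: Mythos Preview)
Your treatment of the upper bound and of the locally-free refinement agrees with the paper's proof line for line. For the lower bound $c_2(N_\sF^*)\ge d+2$, however, the paper does not argue directly: it simply invokes \cite[Theorem~1.1]{Sancho}, which asserts $\deg(C)\le d^2+d+1$ for the one-dimensional part of the singular scheme of a degree-$d$ foliation by curves on $\p3$.

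Your plane-section approach is a plausible route to that same inequality, and you honestly flag the missing step: that for a general plane $H$ the tangential component $\psi\in H^0(\tp2(d-1))$ has $0$-dimensional vanishing locus. But this is not a minor detail --- establishing it is essentially the content of the cited bound, so as written your proposal has a genuine gap at the lower bound. One additional wrinkle you do not mention: the splitting $\tp3|_H\simeq \tp2\oplus\op2(1)$ is not canonical (splittings form a torsor over $H^0(\tp2(-1))\cong\C^3$), so $\psi$ itself depends on this auxiliary choice, and any genericity argument must take that into account as well. Absent a proof of the transversality, you should cite \cite{Sancho} as the paper does.
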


\begin{proof}
The upper bound follows from the second equality in Theorem \ref{P:SLocus} by noticing that $\deg(C)\ge0$. The equality is attained when $\sF$ is a generic foliation, so that $\deg(C)=0$.
Assume that $\sF$ is not generic, i.e, $C\neq \emptyset$.
 It follows from \cite[Theorem 1.1]{Sancho} that $$\deg(C) \le d^2+d+1;$$ substituting this in the second equality in Theorem \ref{P:SLocus} gives the lower bound in the statement of the lemma.

If $N_\sF^*$ is locally free, then $\deg(C)\ge1$; if equality holds, it follows that $C$ must be a line, so $p_a(C)=0$. The equality in display \eqref{eq:gen} would imply that the polynomial equation
$$ d^3 + d^2 - 2d + 2 = 0 $$
has an integer solution, which it does not. Thus $\deg(C)\ge2$, and we obtain the improved upper bound in the second part of the statement.
\end{proof}

Next, we give a cohomological criterion for the connectedness of the singular scheme of foliations by curves analogous to the criterion given for codimension one distributions in \cite[Theorem 3.8]{CCJ}. 

\begin{Prop}\label{con}
Let $\sF$ be a foliation by curves on $\P^3$ of degree $d\ge2$ with locally free conormal sheaf. If $h^2(N_\sF^*(1-d))=0$, then $Z:=\sing_1(\sF)$ is connected. Otherwise, $Z$ has $h^2(N_\sF^*(1-d))+1$ connected components, when it is reduced.
\end{Prop}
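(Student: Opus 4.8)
The plan is to convert the connectedness question into a computation of $h^0(\OZ)$, and then read off that quantity from the cohomology of $N_\sF^*$. Since $N_\sF^*$ is locally free, Lemma \ref{n-loc free} gives $R=0$, so $Z=\sing_1(\sF)$ has pure dimension one; in particular $Z\neq\emptyset$ (indeed $\deg C\ge 2$ by Lemma \ref{bound c2}), so $H^0(\IZ)=0$. First I would feed this into the structure sequence $0\to\IZ\to\O\to\OZ\to 0$ and use $h^0(\O)=1$, $h^1(\O)=0$ on $\p3$ to extract the short exact sequence $0\to\C\to H^0(\OZ)\to H^1(\IZ)\to 0$, whence
$$ h^0(\OZ)=1+h^1(\IZ). $$

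Next I would compute $h^1(\IZ)$ from the defining sequence \eqref{pfaff2} twisted by $1-d$,
$$ 0\to N_\sF^*(1-d)\to\Omega^1_{\p3}(1-d)\to\IZ\to 0, $$
whose long exact cohomology sequence contains
$$ H^1(\Omega^1_{\p3}(1-d))\to H^1(\IZ)\to H^2(N_\sF^*(1-d))\to H^2(\Omega^1_{\p3}(1-d)). $$
By Bott's formula one has $H^2(\Omega^1_{\p3}(k))=0$ for every $k$, while $H^1(\Omega^1_{\p3}(k))\neq 0$ only when $k=0$; since the hypothesis $d\ge 2$ forces $1-d\le -1\neq 0$, both outer terms vanish and therefore $h^1(\IZ)=h^2(N_\sF^*(1-d))$. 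Combining with the previous display yields the key identity
$$ h^0(\OZ)=1+h^2(N_\sF^*(1-d)). $$

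Finally I would translate this identity into the stated conclusion. If $h^2(N_\sF^*(1-d))=0$ then $h^0(\OZ)=1$, and any projective scheme with $h^0(\OZ)=1$ is connected, since a disconnected scheme carries at least two linearly independent locally constant functions; this proves the first assertion, and notably it needs no reducedness hypothesis. For the refined count, when $Z$ is reduced each of its connected components is a reduced connected projective scheme over $\C$, so its structure sheaf has a one-dimensional space of global sections; hence $h^0(\OZ)$ is exactly the number of connected components of $Z$, which by the identity above equals $h^2(N_\sF^*(1-d))+1$.

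The ingredients here are routine: the Bott vanishing for $\Omega^1_{\p3}$ and the bookkeeping across the structure and Pfaff sequences. The only delicate point is the passage from $h^0(\OZ)$ to the number of connected components, and this is precisely where reducedness is indispensable (it guarantees that $H^0$ of each component is merely $\C$ rather than a larger Artinian algebra); this explains why the first half of the statement holds unconditionally while the second half requires $Z$ reduced. I would also emphasize that the hypothesis $d\ge 2$ enters exactly to annihilate $H^1(\Omega^1_{\p3}(1-d))$: in degree $d=1$ that term becomes one-dimensional and the identity would acquire a correction term.
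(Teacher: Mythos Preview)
Your proof is correct and follows essentially the same approach as the paper's: twist the Pfaff sequence by $1-d$, use the vanishing of $H^1$ and $H^2$ of $\Omega^1_{\p3}(1-d)$ for $d\ge2$ to identify $h^1(\IZ)=h^2(N_\sF^*(1-d))$, and then combine with the structure sequence of $Z$ to get $h^0(\OZ)=1+h^2(N_\sF^*(1-d))$. Your write-up is in fact more thorough than the paper's own proof---you justify $Z\neq\emptyset$, spell out the Bott vanishing explicitly, and explain why reducedness is needed only for the component count.
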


In particular, the singular scheme of a foliation by curves of global complete intersection type is always connected.

\begin{proof}
Taking cohomology on the following exact sequence
$$ 0 \longrightarrow N_\sF^*(1-d)  \stackrel{\phi}{ \longrightarrow} \Omega_{\P^3}^1 (1-d)\to \IZ \longrightarrow 0, $$
we obtain the equality $h^1(\IZ)=h^2(N_\sF^*(1-d))$, since $d\ge2$. It follows that
$$ h^0(\OZ)=h^1(\IZ)+1=h^2(N_\sF^*(1-d))+1. $$ 
If $h^2(N_\sF^*(1-d))=0$, then $Z$ must be connected. Otherwise, if $Z$ is reduced, then the number of connected components of $Z$ is precisely $h^0(\OZ)$.
\end{proof}

\begin{Obs}\rm
The hypothesis $d\ge2$ is necessary. In fact, the conormal sheaf of a foliation by curves of degree 0 is reflexive but not locally free, and its singular set consists of a single point, see the first paragraph of Section \ref{sec:deg12} below. Furthermore, there exist foliations of degree 1 with $N_\sF^*=\O(-2)^{\oplus2}$ whose singular set consists of two skew lines, see Example \ref{Ex-deg1} below. \qed 
\end{Obs}

We complete this section by proving a useful technical result that partially describes the cohomology of the normal sheaves of foliations by curves. 

\begin{Lemma}\label{L:CH}
If $\sF$ is a foliation by curves on $\p3$ of degree $d\ge1$, then
\begin{enumerate}
\item[(i)] $h^0(N_\sF^*(p))=0\mbox{ for } p\leq 1$;
\item[(ii)] $h^1(N_\sF^*(p))=0$ for $p\leq 1-d$;
\item[(iii)] $h^3(N_\sF^*(p))=0$ for $p\geq d-2$.
\end{enumerate}
If, in addition, $\sing_1(\sF)$ is reduced, then $h^2(N_\sF^*(p))=c_3(N_\sF^*)$ for $p\leq -d$.
\end{Lemma}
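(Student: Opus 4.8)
The plan is to twist the fundamental sequence \eqref{pfaff2} by $\O(p)$,
$$ 0 \to N_\sF^*(p) \to \Om(p) \xrightarrow{\phi^\vee} \IZ(d-1+p) \to 0, $$
and read off the cohomology of $N_\sF^*(p)$ from the associated long exact sequence. Three inputs feed the computation: Bott's formula for the cotangent bundle on $\p3$, which gives $h^0(\Om(p))=0$ for $p\le1$, $h^1(\Om(p))=0$ for $p\ne0$ (with $h^1(\Om)=1$), $h^2(\Om(p))=0$ for all $p$, and $h^3(\Om(p))=0$ for $p\ge-2$; the vanishing $H^0(\IZ(m))=0$ for $m\le0$, valid because $Z\ne\emptyset$; and the structure sequence \eqref{sqc:OC}, combined with $H^1(\O(m))=H^2(\O(m))=0$ for all $m$. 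Note also that $N_\sF^*$ is reflexive, being a saturated subsheaf of the locally free $\Om$ (the quotient $\Om/N_\sF^*\cong\IZ(d-1)$ is torsion free).

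Statement (i) is immediate: $N_\sF^*(p)\hookrightarrow\Om(p)$ induces $H^0(N_\sF^*(p))\hookrightarrow H^0(\Om(p))=0$ for $p\le1$. For (ii) I would examine the segment $H^0(\IZ(d-1+p))\to H^1(N_\sF^*(p))\to H^1(\Om(p))$. For $p\le1-d$ we have $d-1+p\le0$, hence $H^0(\IZ(d-1+p))=0$; and for $p\ne0$ we have $H^1(\Om(p))=0$, so $H^1(N_\sF^*(p))=0$. Since $p\le1-d$, the value $p=0$ occurs only when $d=1$, so the single remaining case is $d=1,\ p=0$. I expect to dispatch it by showing that the map $H^1(\Om)\to H^1(\IZ)$ induced by $\phi^\vee$ is injective; as the preceding term $H^0(\IZ)$ vanishes, $H^1(N_\sF^*)=\ker\bigl(H^1(\Om)\to H^1(\IZ)\bigr)$, and since $H^1(\Om)\cong\C$ is generated by $c_1(\O(1))$ it suffices to see this generator is not annihilated.

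For (iii), from the long exact sequence $H^2(\Om(p))=0$ together with $H^3(\Om(p))=0$ (which holds for $p\ge d-2\ge-1$) give $H^3(N_\sF^*(p))\cong H^2(\IZ(d-1+p))$, and the structure sequence identifies $H^2(\IZ(m))\cong H^1(\OZ(m))\cong H^1(\OC(m))$. Thus (iii) is equivalent to $H^1(\OC(m))=0$ for $m\ge2d-3$. By Serre duality on $\p3$ one has dually $h^3(N_\sF^*(p))=h^0(N_\sF^{**}(-4-p))$, and I would attack this via the four term sequence \eqref{dual pfaff}: the image of $T\p3\to N_\sF^{**}$ is $N_\sF$, a quotient of $T\p3$ by $\mathscr{L}=\O(1-d)$, so $H^0(N_\sF(m))=0$ for $m\le-2$ because $H^0(T\p3(m))=0$ there, and the sections that remain come from the curve supported quotient $\omega_C\otimes\omega_{\p3}\otimes\mathscr{L}$ of \eqref{n-ndd2}. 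I expect this last point to be the \textbf{main obstacle}: controlling $H^0$ of the dualizing sheaf of the singular curve in negative twists does not follow from a crude degree count, since $\deg(C)$ may be as large as $d^2+d+1$ by Lemma \ref{bound c2}; the genuine content of (iii) is exactly this regularity style vanishing for $C$.

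Finally, for the concluding assertion I would use the segment $H^1(\Om(p))\to H^1(\IZ(d-1+p))\to H^2(N_\sF^*(p))\to H^2(\Om(p))$. For $p\le-d$ both $\Om$-terms vanish, giving $H^2(N_\sF^*(p))\cong H^1(\IZ(d-1+p))$; and since $d-1+p\le-1$, the sequence $0\to\IZ\to\O\to\OZ\to0$ yields $H^1(\IZ(d-1+p))\cong H^0(\OZ(d-1+p))$. Splitting this through \eqref{sqc:OC} gives $h^0(\OZ(m))=h^0(R)+h^0(\OC(m))$, where $h^0(R)$ is independent of the twist $m$ since $R$ has dimension $0$, and equals $c_3(N_\sF^*)$ by Theorem \ref{P:SLocus}(iii). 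The reducedness hypothesis on $C$ enters precisely here: for a reduced curve the natural map $\OC\hookrightarrow\bigoplus_i\mathcal{O}_{C_i}$ into its integral components is injective, so $H^0(\OC(m))\hookrightarrow\bigoplus_i H^0(\mathcal{O}_{C_i}(m))=0$ for $m<0$, each $\O(m)|_{C_i}$ having negative degree. Hence $h^0(\OC(d-1+p))=0$ for $p\le-d$, leaving $h^2(N_\sF^*(p))=c_3(N_\sF^*)$, as claimed.
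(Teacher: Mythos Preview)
Your treatment of (i), (ii), and the final assertion is essentially the paper's argument: twist the sequence \eqref{pfaff2}, use Bott's formula for $\Om$, and for the last claim pass through $H^1(\IZ(d-1+p))\cong H^0(\OZ(d-1+p))$ and split via \eqref{sqc:OC}, invoking reducedness of $C$ to kill $h^0(\OC(m))$ for $m<0$. In fact you are more scrupulous than the paper on (ii): the case $d=1,\,p=0$ is not addressed in the published proof either, which simply records that the left term vanishes for $p+d-1\le 0$ and the right term vanishes for $p\ne 0$.

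Where you genuinely get stuck is (iii). You correctly reach $h^3(N_\sF^*(p))=h^0\bigl((N_\sF^*)^\vee(-4-p)\bigr)=h^0(N_\sF^{**}(-4-p))$ via Serre duality, but then you try to compute $H^0(N_\sF^{**}(-4-p))$ through the sequences \eqref{dual pfaff} and \eqref{n-ndd2}, which leads you back to controlling sections of $\omega_C$ in negative twists --- exactly the obstacle you flag. The missing idea is that $N_\sF^*$ is a \emph{rank two} reflexive sheaf, so the perfect pairing $N_\sF^*\otimes N_\sF^*\to\det(N_\sF^*)$ gives
\[
(N_\sF^*)^\vee \;\cong\; N_\sF^*\otimes\det(N_\sF^*)^{-1} \;=\; N_\sF^*(d+3),
\]
since $c_1(N_\sF^*)=-d-3$. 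Hence $h^3(N_\sF^*(p))=h^0\bigl(N_\sF^*(d-1-p)\bigr)$, and for $p\ge d-2$ one has $d-1-p\le 1$, so this vanishes by (i). This one-line reduction is the paper's proof of (iii); there is no need to analyze $\omega_C$ or the normal sheaf at all.
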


\begin{proof} 
The  first item follows easily from the exact sequence in display \eqref{pfaff2}, since $h^0(\Omega_{\p3}^1(p))=0$ for $p\leq 1$. Item (iii) is then obtained via Serre duality (see \cite[Thm. 2.5]{H2}), noticing that 
$$ (N_\sF^*)^{*} = N_\sF^*\otimes  \det (N_\sF^*)^{*} = N_\sF^*(d+3) $$
since $N_\sF^*$ is a rank two reflexive sheaf, see \cite[Prop 1.10]{H2}.

For item (ii), we have the exact sequence in cohomology
$$ H^0(\IZ(d+p-1))\rightarrow H^1(N_\sF^*(p))\rightarrow H^1(\Omega_{\p3}^1(p)). $$
The term on the left vanishes for $p+d-1\leq 0$, while the term of the right vanishes for all $p\neq 0$.

When $C=\sing_1(\sF)$ is reduced, we have that $h^0(\OC(k))=0$ for $k\le-1$, thus
$$ h^1(\IZ(k))=h^0(\OZ(k))=h^0(R)=c_3(N_\sF^*) $$
in the same range. The first part of item (v) then follows from the cohomology sequence
\[ H^1(\Omega_{\P^3}^1(p)) \to H^1(\IZ(p+d-1))\rightarrow H^2(N_\sF^*(p))\rightarrow H^2(\Omega_{\P^3}^1(p)), \]
since the leftmost and rightmost terms vanish for $p\le -d<0$.
\end{proof}

If $N_\sF^*$ is locally free, then Serre duality implies that $h^1(N_\sF^*(k))=h^2(N_\sF^*(d-k-1))$, and the following claim follows easily from the previous lemma.

\begin{Cor}
If $\sF$ is a foliation by curves on $\p3$ of degree $d\ge1$ such that $N_\sF^*$ is locally free and $\sing_1(\sF)$ is reduced, then $h^1(N_\sF^*(p))=h^2(N_\sF^*(p))=0$ for $p\geq2d+1$.
\end{Cor}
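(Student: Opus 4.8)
The plan is to deduce both vanishings from the ranges already recorded in Lemma \ref{L:CH} by transporting them across Serre duality. Since $N_\sF^*$ is assumed locally free of rank two, the text has already observed the duality isomorphism $h^1(N_\sF^*(k))=h^2(N_\sF^*(d-k-1))$, coming from $(N_\sF^*)^*\cong N_\sF^*(d+3)$ together with $\omega_{\p3}=\O(-4)$; applying it in the other direction likewise gives $h^2(N_\sF^*(k))=h^1(N_\sF^*(d-k-1))$. Thus the whole statement reduces to checking that the index shift $k\mapsto d-k-1$ carries the half-line $p\ge 2d+1$ into the degrees where Lemma \ref{L:CH} already guarantees vanishing.

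The one ingredient that genuinely needs to be pointed out is that $c_3(N_\sF^*)=0$. This holds because $N_\sF^*$ is locally free of rank two, so its Chern classes in degrees above the rank vanish; equivalently, by Theorem \ref{P:SLocus}(iii) one has $c_3(N_\sF^*)=h^0(R)$, and local freeness forces $R=0$. This is the crucial point, since the last clause of Lemma \ref{L:CH} only asserts $h^2(N_\sF^*(p))=c_3(N_\sF^*)$ for $p\le -d$; it is precisely the vanishing of $c_3$ that upgrades this to $h^2(N_\sF^*(p))=0$ throughout that range.

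With this in hand I would argue as follows. For the first vanishing, write $h^1(N_\sF^*(p))=h^2(N_\sF^*(d-p-1))$ by Serre duality; when $p\ge 2d+1$ we have $d-p-1\le -d-2\le -d$, so the now-vanishing reduced-case clause of Lemma \ref{L:CH} gives $h^1(N_\sF^*(p))=0$. For the second, apply duality the other way, $h^2(N_\sF^*(p))=h^1(N_\sF^*(d-p-1))$; for $p\ge 2d+1$ the argument satisfies $d-p-1\le -d-2\le 1-d$, which lies in the range of Lemma \ref{L:CH}(ii), so $h^1(N_\sF^*(d-p-1))=0$ and hence $h^2(N_\sF^*(p))=0$.

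There is no serious obstacle here: the content is entirely bookkeeping, and the only step requiring a moment's thought is the reduction recording $c_3=0$, without which the $h^2$ statement at $p\le -d$ would merely identify the cohomology with $c_3$ rather than make it vanish. I would double-check the numerics; in fact the same computation yields the sharper combined threshold $p\ge 2d-1$, but the clean bound $p\ge 2d+1$ stated in the corollary is immediate from the inequalities above.
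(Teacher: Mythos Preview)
Your proof is correct and follows essentially the same approach as the paper, which simply records the Serre duality identity $h^1(N_\sF^*(k))=h^2(N_\sF^*(d-k-1))$ and says the claim ``follows easily from the previous lemma.'' Your version is more explicit---in particular you spell out why $c_3(N_\sF^*)=0$ is needed to convert the last clause of Lemma~\ref{L:CH} into an actual vanishing---but the argument is the same.
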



 Theorem 5.1 in \cite{CJV} states that, under a hypothesis on the codimension of the singular scheme, globally complete intersection foliations of dimension one are determined by their singular schemes. In fact, Lemma \ref{n-loc free} above guarantees that the hypothesis we just mentioned always holds, thus \cite[Theorem 5.1]{CJV} can be restated as follows.

\begin{Theorem}\label{determi}\cite[Theorem 5.1]{CJV}
Let $\mathscr{F}$ be a foliation by curves
on  $\mathbb{P}^m$ of degree $d\geq m-2$ such that $N_\sF^*$ splits as a sum of line bundles. If $\mathscr{F}'$ is a foliation by curves on $\mathbb{P}^m$ of degree $d$, such that $\sing(\mathscr{F}) \subset \sing(\mathscr{F}') $, then $ \mathscr{F}'=\mathscr{F}$.
\end{Theorem}

We finish this section with a technical result that will be useful below.

\begin{Lemma}\label{sub-genusdeg1}
Let $\G$ be a codimension one distribution of degree 1 on $\p3$. If $\sing(\G)$  has degree 3, then it does not contain a double line of genus $<-1$. 
\end{Lemma}
\begin{proof}
Since $\deg(\sing(\G))=3$, then by \cite[section 8]{CCJ} we have that $W:=\sing(\G)$ has pure dimension one. We argue by contradiction and assume that $W$ contains a double line $S$ of genus $p_a(S)<-1=-\deg(\G)$. Then by \cite[Corollary 9]{GJM} the singular scheme  $\sing(\G)$ contains $S_{red}^{(2)}$ the  second infinitesimal neighborhood of  $S_{red}$. In particular,   $\sing(\G)=  S_{red}^{(2)}$, since $\deg(S_{red}^{(2)})=3$.  A contradiction, because by \cite[Section 8, pages 12 and 13]{CCJ}, such distribution would be singular in codimension one.
\end{proof}


\section{Foliations with locally free conormal sheaf of degree 1 and 2} \label{sec:deg12}

Foliations by curves of degree 0 on $\p3$ are quite simple to describe. These are given by the choice of a nontrivial section $\sigma\in H^0(\tp3(-1))$, leading to the exact sequence
$$ \sF ~~:~~ 0\to \op3(1) \stackrel{\sigma}{\to} \tp3 \to S_p \to 0, $$
where $S_p$ is the rank 2 reflexive sheaf defined by the resolution
$$ 0 \to \op3 \to \op3(1)^{\oplus 3} \to S_p \to 0 ,$$
with $p$ being the point where the three linear sections of the first morphism vanish simultaneously. Note that $\sing(\sF)=\{p\}$. Such sheaves are called 1-tail and have been studied, more in general, in \cite{CMM}.

In particular, the conormal sheaf of a foliation by curves of degree 0 is never locally free. The dual description of such foliations is given by the exact sequence
$$ \sF ~~:~~ 0 \to S_p(-3) \to \Omega^1_{\p3} \stackrel{\sigma^\vee}{\to} \mathcal{I}_p(-1) \to 0. $$
 The closure of the leaves of the foliation $\sF$ are lines passing through the singular point $p$. Thus, any two degree zero foliations are equivalent up to automorphism. We refer to \cite[Chapitre 7]{CeD} for the classification of foliations of degree zero and any dimension. 

\bigskip

Let us now consider foliations by curves of degree 1 on $\p3$ with locally free conormal sheaf.

\begin{Theorem} \label{thm-deg1}
If $\sF$ be a foliation by curves on $\P^3$ of degree 1 with locally free conormal sheaf, then $N_\sF^*=\op3(-2)^{\oplus 2}$ and $Sing(\sF)$ consists of two skew lines. 
\end{Theorem}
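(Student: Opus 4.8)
The plan is to first pin down the Chern classes of $N_\sF^*$, then force the splitting of the conormal bundle by producing a nowhere-vanishing section of a twist, and finally identify the singular scheme geometrically. Since $\sF$ has degree $1$ and locally free conormal, Lemma \ref{n-loc free} tells me that $Z:=\sing(\sF)$ is a pure one-dimensional curve $C$, and Theorem \ref{P:SLocus} gives $c_1(N_\sF^*)=-4$ together with $c_2(N_\sF^*)=6-\deg(C)$. To nail down $c_2$ I would set $E:=N_\sF^*(2)$, so that $c_1(E)=0$ and $c_2(E)=c_2(N_\sF^*)-4$. By Lemma \ref{L:CH}(i) we have $h^0(E(-1))=h^0(N_\sF^*(1))=0$, so $E$ carries no sub-line bundle of positive degree and is thus $\mu$-semistable; the Bogomolov inequality $c_1(E)^2\le 4c_2(E)$ then forces $c_2(E)\ge 0$, while Lemma \ref{bound c2} gives $c_2(N_\sF^*)\le 4$. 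Hence $c_2(N_\sF^*)=4$, so $\deg(C)=2$ and, by formula \eqref{eq:gen}, $p_a(C)=-1$.

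Next I would prove $E\cong\op3^{\oplus 2}$. Now $E$ is a rank two bundle with $c_1(E)=c_2(E)=0$, so $\chi(E)=2$ and $E^*\cong E$. The point is to get a section. Twisting the Pfaff sequence \eqref{pfaff2} by $-2$ yields $0\to E(-4)\to\Omega^1_{\p3}(-2)\to\IZ(-2)\to 0$, and Bott's vanishing $h^0(\Omega^1_{\p3}(-2))=h^1(\Omega^1_{\p3}(-2))=0$ together with $h^0(\IZ(-2))=0$ gives $h^1(E(-4))=0$ and $h^0(E(-4))=0$; by Serre duality these are $h^2(E)=h^3(E)=0$. Therefore $h^0(E)=\chi(E)+h^1(E)\ge 2$. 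Any nonzero $s\in H^0(E)$ has vanishing locus of codimension $\ge 2$ (since $h^0(E(-1))=0$) and degree $c_2(E)=0$, hence empty, so $s$ is nowhere vanishing and fits in $0\to\op3\xrightarrow{s}E\to\op3\to 0$; this splits because $\ext^1(\op3,\op3)=H^1(\op3)=0$. Thus $E\cong\op3^{\oplus 2}$ and $N_\sF^*\cong\op3(-2)^{\oplus 2}$, which also exhibits $\sF$ as a complete intersection of two codimension one distributions.

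To identify $C$ I would pass to the tangent side. As $d=1$, the foliation is a global vector field $\phi\in H^0(T\p3)$, i.e.\ a matrix $A\in\mathfrak{gl}_4(\C)$ modulo scalars, and $Z$ is the projectivized locus of eigenvectors of $A$. We already know $C$ is pure of dimension one, of degree $2$ and arithmetic genus $-1$. \emph{If} $C$ is reduced, we are done quickly: a reduced pure one-dimensional curve of degree $2$ is a smooth conic, a pair of incident lines, or a pair of skew lines; the first two are connected and hence have $p_a\ge 0$, so $p_a(C)=-1$ leaves only two skew lines, whose eigenplanes $V_1,V_2\subset\C^4$ are then complementary, confirming skewness. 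So the whole problem reduces to proving reducedness of $C$, equivalently to determining which Jordan types of $A$ produce a purely one-dimensional eigenvector scheme.

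The main obstacle is exactly this reducedness. The numerical and cohomological data alone do \emph{not} settle it: a non-reduced double line (a ribbon supported on a single line with $\mathcal I_\ell/\IC\cong\op3|_\ell$) also has degree $2$, arithmetic genus $-1$, is a local complete intersection (hence pure, with $R=0$), and in fact yields the very same split conormal bundle $\op3(-2)^{\oplus 2}$ by the argument of the previous paragraph. Such a structure arises from a matrix $A$ with nilpotent Jordan blocks, so it cannot be excluded by Chern-class or vanishing arguments and must be ruled out by the finer scheme-theoretic analysis of $A$: one runs through the Jordan types, discards those giving divisorial vanishing (non-saturated $\phi$) or isolated fixed points (forcing $R\ne 0$), and must verify that the remaining nilpotent configurations do not produce a saturated foliation with purely one-dimensional singular scheme, leaving only the diagonalizable $(2,2)$ case. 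Carrying out this case analysis honestly — in particular handling the nilpotent blocks — is where the real work lies.
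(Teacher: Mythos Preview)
Your argument for $N_\sF^*\cong\op3(-2)^{\oplus2}$ is correct and takes a genuinely different route from the paper. The paper invokes a criterion of Roggero--Valabrega \cite{RV2}: if a rank~$2$ bundle with $c_1=-4$ does not split then $h^1(N_\sF^*(1))\ne0$; via the Pfaff sequence this equals $h^0(\IC(1))$, forcing $C$ to lie in a plane, which contradicts $p_a(C)=-1$. You instead pin down $c_2(N_\sF^*)=4$ first (Bogomolov for the lower bound, Lemma~\ref{bound c2} for the upper) and then split $E=N_\sF^*(2)$ directly by producing a nowhere-vanishing section through Riemann--Roch and the vanishings $h^2(E)=h^3(E)=0$. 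Your approach is more self-contained; the paper's is shorter once the external splitting criterion is granted.

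On the identification of $C$ you are more scrupulous than the paper, which simply asserts that $\deg(C)=2$ and $p_a(C)=-1$ force two skew lines. You are right that a non-planar double line (a ribbon on $L$ with $\mathcal{I}_L/\IC\cong\mathcal{O}_L$) has the same numerical invariants and is a local complete intersection. However, your expectation that a Jordan-form analysis would \emph{eliminate} this case is mistaken: take $A=J_2(0)\oplus J_2(0)$, i.e.\ the vector field $x_1\partial_{x_0}+x_3\partial_{x_2}$. This section of $T\p3$ is saturated, and its singular scheme is the lci double line with ideal $(x_1^2,\,x_0x_3-x_1x_2,\,x_1x_3,\,x_3^2)$ --- pure of dimension one --- so its conormal sheaf is locally free and hence isomorphic to $\op3(-2)^{\oplus2}$ by your own argument. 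Thus the case analysis you outline would not terminate where you predict; the honest conclusion is that $\sing(\sF)$ is either two skew lines or this particular double structure (the flat limit as the lines collide). Both the paper's proof and its literal statement silently pass over this degeneration.
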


\begin{proof}
Since $c_1(N_\sF^*)=-4$, \cite[Corollary 2.2]{RV2} tells us that if $N_\sF^*$ does not split as a sum of line bundles, then $h^1(N_\sF^*(1))\ne0$. 

Note from the sequence
$$ 0\to N_\sF^* \to \Omega^1_{\p3} \to \IC \to 0, $$
where $C:=\sing(\sF)$ is a curve of degree $6-c_2(N_\sF^*)$, that $h^1(N_\sF^*(1))=h^0(\IC(1))$, so if $N_\sF^*$ does not split as a sum of line bundles, then $C$ is a plane curve. However, the expressions in display \eqref{eq:gen} imply that $p_a(C)=-1$, and no plane curve can have negative genus.

We conclude that $N_\sF^*$ must split as a sum of line bundles, and the only possibility in degree 1 is $\op3(-2)^{\oplus 2}$. In addition, since $\deg(C)=2$ and $p_a(C)=-1$, $C$ must consist of the union of two skew lines.
\end{proof}

Our next goal is the classification of degree 2 foliations.

\begin{Theorem} \label{thm-deg2}
Let $\sF$ be a foliation by curves on $\P^3$ of degree 2 with locally free conormal sheaf. Then $N_\sF^*=\op3(-2)\oplus\op3(-3)$ and $\sing(\sF)$ is a connected curve of degree 5 and arithmetic genus 1. 
\end{Theorem}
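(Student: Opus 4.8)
The plan is to follow exactly the template established in the degree 1 case (Theorem \ref{thm-deg1}), adapting the numerics. First I would record the basic invariants. Since $d=2$, Theorem \ref{P:SLocus} gives $c_1(N_\sF^*)=-5$, and from the sequence in display \eqref{pfaff2} we have
\begin{equation*}
0\to N_\sF^*\to\Omega^1_{\p3}\to\IC(1)\to 0,
\end{equation*}
with $C:=\sing(\sF)$ of degree $\deg(C)=11-c_2(N_\sF^*)$ by display \eqref{eq:degZ}. Taking cohomology, I would extract the identity $h^1(N_\sF^*(1))=h^0(\IC(2))$, using that $h^0(\Omega^1_{\p3}(1))=h^1(\Omega^1_{\p3}(1))=0$. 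The point of this identity is that a nonvanishing $h^0(\IC(2))$ forces $C$ to lie on a quadric surface.

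Next I would invoke the splitting criterion. Because $c_1(N_\sF^*)=-5$ is odd, the normalized bundle $N_\sF^*(3)$ has $c_1=1$; the relevant consequence of a Castelnuovo–Mumford / Rao-type criterion (as in \cite[Corollary 2.2]{RV2}, used identically in Theorem \ref{thm-deg1}) is that if $N_\sF^*$ does not split as a sum of line bundles then $h^1(N_\sF^*(1))\ne 0$, hence $h^0(\IC(2))\ne0$ and $C$ is contained in a quadric. The main work is then to derive a contradiction from the assumption that $N_\sF^*$ is non-split, i.e. that $C$ lies on a quadric. The arithmetic genus is pinned down by display \eqref{eq:gen}: with $d=2$ one gets $p_a(C)=6-\tfrac{3}{2}c_2(N_\sF^*)$, which is an integer only when $c_2(N_\sF^*)$ is even. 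I would combine this with the bound from Lemma \ref{bound c2}, namely $d+2=4\le c_2(N_\sF^*)\le d^2+2d+1=9$ in the locally free case, to restrict $c_2(N_\sF^*)$ to a short list of admissible values, and correspondingly $(\deg(C),p_a(C))$ to finitely many pairs.

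The heart of the argument, and the step I expect to be the main obstacle, is ruling out that $C$ sits on a quadric for each surviving numerical case. The clean way is a Castelnuovo-type bound: a curve of degree $e$ lying on an irreducible quadric in $\p3$ has arithmetic genus at most roughly $\tfrac{1}{4}e^2-e+1$ (the $(a,b)$-divisor computation on $\P^1\times\P^1$, with the reducible/degenerate quadric cases handled separately), and one checks that the forced pair $(\deg(C),p_a(C))$ violates this bound—exactly as the "no plane curve has negative genus" obstruction was used in the degree 1 proof. I would therefore verify, case by case over the admissible even values of $c_2(N_\sF^*)$, that the predicted genus is incompatible with lying on any quadric, eliminating the non-split possibility entirely.

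Having excluded non-split conormal sheaves, $N_\sF^*$ must be a sum of two line bundles $\op3(-a)\oplus\op3(-b)$ with $a+b=5$. I would discard the degenerate splittings using the vanishing $h^0(N_\sF^*(1))=0$ from Lemma \ref{L:CH}(i), which forces $a,b\ge2$; together with $a+b=5$ this leaves only $\{a,b\}=\{2,3\}$, giving $N_\sF^*=\op3(-2)\oplus\op3(-3)$. This fixes $c_2(N_\sF^*)=6$, whence display \eqref{eq:degZ} and display \eqref{eq:gen} yield $\deg(C)=5$ and $p_a(C)=1$. Finally I would establish connectedness via Proposition \ref{con}: it suffices to check $h^2(N_\sF^*(1-d))=h^2(N_\sF^*(-1))=0$, which is immediate from the explicit splitting since $h^2(\op3(-3))=h^2(\op3(-4))=0$. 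This completes all the assertions of the statement.
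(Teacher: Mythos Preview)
Your approach has a genuine gap at the ``heart of the argument.'' The Castelnuovo-type bound on quadrics does not yield the contradiction you need. First, a minor slip: the correct genus formula from display \eqref{eq:gen} with $d=2$ is $p_a(C)=10-\tfrac{3}{2}c_2(N_\sF^*)$, not $6-\tfrac{3}{2}c_2(N_\sF^*)$. With the corrected numbers the surviving cases are
\[
c_2=4:\ (\deg C,\,p_a C)=(7,4),\qquad c_2=6:\ (5,1),\qquad c_2=8:\ (3,-2).
\]
For $c_2=4$, a divisor of type $(2,5)$ on a smooth quadric has degree $7$ and genus $(2-1)(5-1)=4$; for $c_2=8$, three disjoint lines in one ruling form a degree-$3$ curve with $p_a=-2$ lying on a smooth quadric. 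So in both non-split candidate cases the pair $(\deg C,p_a C)$ is perfectly compatible with $C\subset Q$, and no contradiction arises. The degree~1 template succeeded only because plane curves are automatically connected, forcing $p_a\ge 0$; on a quadric, curves may be disconnected and the analogous lower bound disappears. (There is also a question of whether \cite[Corollary 2.2]{RV2} really delivers $h^1(N_\sF^*(1))\ne 0$ when $c_1=-5$; you assert this without checking the odd-$c_1$ normalization carefully.)

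The paper's proof proceeds quite differently and does not use the Roggero--Valabrega criterion at all in degree~2. Setting $E:=N_\sF^*(2)$ (so $c_1(E)=-1$, $c_2(E)=c_2(N_\sF^*)-6$), it observes that for $c_2\in\{4,6\}$ the Bogomolov inequality $c_1^2-4c_2>0$ forces $E$ to be non-stable, hence $h^0(E)\ne 0$; a nontrivial section then either contradicts the numerics ($c_2=4$) or is nowhere vanishing, giving $E\simeq\op3\oplus\op3(-1)$ ($c_2=6$). The hard case is $c_2=8$, where $c_2(E)=2$: if $h^0(E)\ne0$ a diagram chase against a non-locally-free null correlation sheaf gives a contradiction, and if $h^0(E)=0$ then $E$ is stable with $(c_1,c_2)=(-1,2)$, hence the cohomology of an explicit monad, and a direct syzygy computation shows no such $E$ can inject into $\Omega^1_{\p3}(2)$ with the required degeneracy locus. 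This case-by-case bundle analysis is what replaces your quadric-containment argument.
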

\begin{proof}
Lemma \ref{bound c2} tells us that $4\le c_2(N_\sF^*)\le 9$, while the expressions in displays \eqref{eq:degZ} and \eqref{eq:gen} yield
$$ \deg(C) = 11 - c_2(N_\sF^*) ~~{\rm and} $$
$$ p_a(C) = 10 - 3c_2(N_\sF^*)/2, $$
where $C:=\sing(\sF)$. Since $c_2(N_\sF^*)$ must be even, there are only 3 possible values: 4, 6 and 8.

Set $E:=N_\sF^*(2)$, and note that $c_1(E)=-1$ and $c_2(E)=c_2(N_\sF^*)-6$. Lemma \ref{L:CH} implies that $h^0(E(k))=0$ for $k\le-1$.  

If $c_2(N_\sF^*)=4$, then $c_1(N_\sF^*)^2-4c_2(N_\sF^*)=9>0$, so $N_\sF^*$ cannot be $\mu$-stable. It follows that $h^0(E)\ne0$, so let $\sigma\in H^0(E)$ be a nontrivial section. If $\sigma$ does not vanish, then $E=\op3\oplus\op3(-1)$,
which contradicts $c_2(E)=-2$; so let $Y:=(\sigma)_0$. It would follow that $\deg(Y)=c_2(E)=-2$, again a contradiction.

Now let $c_2(N_\sF^*)=6$. Again $c_1(N_\sF^*)^2-4c_2(N_\sF^*)=1>0$, so $N_\sF^*$ cannot be $\mu$-stable. Again, it follows that $h^0(E)\ne0$, but since $c_2(E)=0$ any section $\sigma\in H^0(E)$ must be nowhere vanishing, thus $E=\op3\oplus\op3(-1)$, implying that $N_\sF^*=\op3(-2)\oplus\op3(-3)$. The connectedness of $\sing(\sF)$, in this case, is an immediate consequence of Proposition \ref{con}.

Finally, assume that $c_2(N_\sF^*)=8$, so $c_2(E)=2$. If $h^0(E)\ne 0$, then we obtain the diagram
$$\xymatrix{
& 0\ar[d] & 0\ar[d] & & \\
& \op3 \ar@{=}[r]\ar[d] & \op3\ar[d] & & \\
0\ar[r] &  E \ar[r]\ar[d] & \Omega^1_{\p3}(2) \ar[r]\ar[d] & \IC(3) \ar[r]\ar@{=}[d] & 0 \\
0\ar[r] & \mathcal{I}_Y(-1) \ar[r]\ar[d] & N(1) \ar[r]\ar[d] & \IC(3) \ar[r] & 0 \\
& 0 & 0 & &
}$$
where $N$ is a non locally free null-correlation sheaf, and $Y:=(\sigma)_0$ is a curve of degree $c_2(E)=2$; in addition, $C$ is a curve of degree 3 and genus -2. However, we claim that there can be no injective morphism $\mathcal{I}_Y(-1)\hookrightarrow N(1)$ when $\deg(Y)>1$, leading to a contradiction. Indeed, a non locally free null-correlation sheaf satisfies the following short exact sequence
$$ 0 \to N \to \op3^{\oplus2} \to \mathcal{O}_L(1) \to 0, $$
where $L$ is a line \cite[Section 6]{JMT}. A monomorphism $\varphi:\mathcal{I}_Y(-1)\hookrightarrow N(1)$ would then lead to the following commutative diagram
$$\xymatrix{
& 0\ar[d] & 0\ar[d] & 0\ar[d] & \\
0\ar[r] & \mathcal{I}_Y(-1) \ar[r]\ar[d]^{\varphi} & \op3(-1) \ar[d]^{\varphi^{\vee\vee}} \ar[r] & \mathcal{O}_Y(-1) \ar[r] \ar[d] & 0 \\
0\ar[r] &  N(1) \ar[r] & \op3(1)^{\oplus2} \ar[r] & \mathcal{O}_L(1) \ar[r] & 0 
}$$
implying that $Y$ must be a subscheme of the line $L$, thus $\deg(Y)\le1$.

It follows that $E$ must be a $\mu$-stable rank 2 bundle with $(c_1(E),c_2(E))=(-1,2)$.   In particular, we have that $H^0(E(1))\neq 0$ \cite[Proposition 1.1]{HS}. Then, we have the following diagram
$$ 
\xymatrix{
&0 \ar[d] &0 \ar[d] \\
& \op3  \ar[d] \ar@{=}[r]&\op3  \ar[d] \\
0 \ar[r]  &  E(1) \ar[r]\ar[d] & \Omega_{\mathbb{P}^3}^1(3)\ar[r] \ar[d]  &  \mathcal{I}_C(4) \ar@{=}[d]\ar[r]&  0 & \\
 0 \ar[r]  & \mathcal{I}_S(1)\ar[r] \ar[d]  & G  \ar[d]\ar[r]  &  \mathcal{I}_C(4)   \ar[r]&  0 &
 \\
  &0 &0 &&,
}
$$
where $S$ is double line of genus $-2$, so that $\omega_S(3)\simeq \mathcal{O}_S$. Dualizing the middle row we get
$$
0 \longrightarrow  G^* \longrightarrow  T\p3(-3) \longrightarrow  \mathcal{O}_{\p3}  \longrightarrow  \inext^1(G,\op3) \longrightarrow 0;
$$
cutting it into short exact sequences, we conclude that $\inext^1(G,\op3)\simeq \mathcal{O}_W$ and obtain the following codimension one distribution of degree 1
$$
\mathscr{D}: 0 \longrightarrow  G^*(3) \longrightarrow  T\p3 \longrightarrow  \mathcal{I}_{W}(3)  \longrightarrow 0,
$$
where $W=\sing(\mathscr{D})$. 

Dualizing the bottom line of the above  diagram  we obtain the epimorphism
$$ \inext^1(G,\op3)\simeq \mathcal{O}_W \twoheadrightarrow
\inext^1(\mathcal{I}_S(1),\op3) \simeq \mathcal{O}_S $$
from which it follows that $S\subset W$.

Let $W_1$ denote the component of pure  dimension 1 of $W$, so that $\deg(W_1)\geq 2$.
According to the classification of codimension one distributions of degree 1 (see \cite[Section 8]{CCJ}),   we have the following possibilities:
\begin{itemize}
\item $W_1$ is a  (possibly degenerate) conic; in this case, we must have that $S=W_1$, but this leads into a contradiction, since $S$ is a double line of genus $-2$. 

\item  $W=W_1$  is a curve of  degree $3$   and  $S\subset W$  is  a double line of genus $p_a(S)=-2<-1$. But this contradicts Lemma \ref{sub-genusdeg1}. 
\end{itemize}
 
\end{proof}

\begin{Cor}
Every foliation by curves with locally free conormal sheaf and degree 1 or 2 is contained in a pencil of planes or is Legendrian, and is given by the global  complete intersection of two codimension one distributions.

 In addition, the moduli space of 
locally complete intersection foliations 
of degree $d\in\{1,2\}$ is an irreducible quasi-projective variety of dimension 8 if $d=1$, and dimension $20$  if $d=2$.
\end{Cor}
\begin{proof}
It follows from Theorems \ref{thm-deg1} and \ref{thm-deg2} that 
the  foliation $\sF$ is such that   $N_\sF^*$ is split and $\op3(-2) \subset N_\sF^*\subset \Omega_{\mathbb{P}^3}^1$ is a degree zero  codimension one distribution which  contains $\sF$. Now, the result follows from the classification codimension one distribution  of degree zero \cite[Proposition 7.1]{CCJ}.

Since a generic point of the moduli space of 
locally complete intersection foliations 
of degree $d\in\{1,2\}$ is a Legendrian foliation, then  the second part of the Corollary follows from  Theorem \ref{thm-leg}. See also remark \ref{Obs-moduli-split}. 

\end{proof}

 Item (3) of Main Theorem  \ref{classification-degree12}
 follows from  Theorem   \ref{determi}.

\begin{Ex}\label{Ex-deg1}\rm 
Let $\sF_1$ and $\sF_2$ be one-codimensional distributions on
$\mathbb{P}^3$ induced, respectively, by    the $1$-forms
$\omega_1=x_{0}dx_1-x_{1}dx_{0}$ and
$\omega_2=x_{0}dx_1-x_{1}dx_{0}+ x_{2}dx_3-x_{3}dx_{2}$. We have that the complete intersection $\sF=\sF_1\cap\sF_2$, of degree one,  is given by
$$
\omega=\omega_1\wedge\omega_2=x_0x_2dx_1 \wedge dx_3- x_0x_3dx_1
\wedge dx_2 -x_1x_2dx_0\wedge  dx_3+x_1x_3dx_0\wedge dx_2
$$
with $\mathrm{Sing}(\sF)=\{x_0=x_1=0\}\cup\{ x_2=x_3=0\}$.
Since $\omega_2$ induces a  contact distribution and $\omega_1$ induces a pencil of planes on $\mathbb{P}^3$, we have that $\sF$ is a Legendrian foliation whose leaves are  contained in a pencil of planes. 
\end{Ex}

\begin{Ex}\label{Ex-deg2}\rm 
Consider the five lines $L_1=\{x_{2}=x_{1}=0\}$, $L_2=\{x_{2}-x_{3}=x_{0}-x_{1}=0\}$,
$L_3=\{x_{1}=x_{0}=0\}$, $L_4=\{x_{1}-x_{3}=x_{0}-x_{2}=0\}$ and $L_5=\{x_{1}-x_{2}=x_{0}-x_{3}=0\}$; and set $C= \cup_{i=1}^5L_i$.
Then, $C$ is a curve of degree $5$ and  arithmetic genus $1$.  
The foliation  by curves $\sF$  of degree $2$  induced  by the quadratic vector field 
$$
v=(3\,x_{0}^{2}-5\,x_{1}^{2}-7\,x_{0}x_{2}+5\,x_{1}x_{2}-x_{0}x_{3}+5\,x_{1}x_{3})\frac{\partial}{\partial x_0}-(2\,x_{0}x_{1}+2\,x_{1}x_{2}-4\,x_{1}x_{3})\frac{\partial}{\partial x_1}+
$$
$$
+(5\,x_{0}x_{1}-5\,x_{1}^{2}-7\,x_{0}x_{2}+3\,x_{2}^{2}+5\,x_{1}x_{3}-x_{2}x_{3})\frac{\partial}{\partial x_2}-
(5\,x_{0}x_{1}-5\,x_{1}^{2}+5\,x_{1}z_{2}-3\,x_{0}x_{3}-3\,x_{2}x_{3}+x_{3}^{2})\frac{\partial}{\partial x_3}
$$
is such that $\mathrm{Sing}(\sF)=C$. It follows from Theorem \ref{thm-deg2} that $\sF$ is the only foliation, of degree $2$, singular on $C$ and it   is a global complete intersection of
two codimension one distributions. 
\end{Ex}


\section{Foliations with locally free conormal sheaf of degree 3} \label{sec:deg3}

In this section, we will prove the classification of the topological and algebraic invariants of foliations by curves of degree 3 with locally free conormal sheaf stated in Main Theorem \ref{classification-degree3}. Note that a foliation $\sF$ of degree $3$ is given by the short exact sequence
$$ 0 \to N_\sF^* \to \Omega_{\mathbb{P}^3}^1 \to \IC(2) \to  0. $$

We begin by considering global  complete intersection foliations, that is, the case when the conormal sheaf $N_\sF^*$ splits as a sum of line bundles. These correspond to cases (1) and (2) of Main Theorem \ref{classification-degree3}.

Since $c_1(N_\sF^*)=-6$, it is easy to see that $\op3(-2)\oplus\op3(-4)$ and $\op3(-3)^{\oplus2}$ are the only possibilities in degree 3. The connectedness of the singular scheme is a straightforward consequence of Proposition \ref{con}, while the   calculation of its degree and genus uses the formulas in display \eqref{eq-deggenus}.

To start addressing the third item of Main Theorem \ref{classification-degree3}, we establish the following result.

\begin{Prop}\label{prop-stable}
Let $\sF$ be a foliation by curves of degree $3$ on $\p3$ of local complete intersection type. If $N_\sF^*$ does not split as a sum of line bundles, then it is stable and
$10\le c_2(N_\sF^*)\le 16$.
\end{Prop}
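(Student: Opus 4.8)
The plan is to establish stability first, then the numerical bounds on $c_2(N_\sF^*)$. Set $E := N_\sF^*(3)$, so that $c_1(E)=0$ and $c_2(E)=c_2(N_\sF^*)-6$, since twisting a rank 2 sheaf with $c_1(N_\sF^*)=-6$ by $\op3(3)$ normalizes the first Chern class. Because $E$ is a rank 2 bundle with $c_1(E)=0$, $\mu$-stability is equivalent to $h^0(E)=0$, i.e. $h^0(N_\sF^*(3))=0$; and $\mu$-semistability with $h^0(E)=0$ already forces stability for even $c_1$. So the first task is to rule out nontrivial sections of $E$.

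First I would suppose, for contradiction, that $h^0(E)\neq 0$ and take a nonzero section $\sigma\in H^0(E)$. Its vanishing locus $Y:=(\sigma)_0$ has codimension at least 2 (otherwise $\sigma$ would factor through a line bundle summand and $N_\sF^*$ would split, contrary to hypothesis), and a standard Serre-type argument gives an exact sequence $0\to\op3\to E\to\IY\to 0$, whence $\deg(Y)=c_2(E)=c_2(N_\sF^*)-6$. Combining the inclusion $\op3(-3)\hookrightarrow N_\sF^*\hookrightarrow\Omega^1_{\p3}$ with the defining sequence $0\to N_\sF^*\to\Omega^1_{\p3}\to\IC(2)\to 0$, I would reproduce the diagram-chase used in the proof of Theorem \ref{thm-deg2}: the subline-bundle $\op3(-3)$ of $N_\sF^*$ produces a codimension-one distribution containing $\sF$ and forces a factorization of $\phi^\vee$ that is incompatible with $\sing(\sF)$ being purely one-dimensional (which holds since $N_\sF^*$ is locally free, by Lemma \ref{n-loc free}) unless $N_\sF^*$ splits. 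The key point is that a section of $E$ yields a degree-zero divisor or a splitting, both of which contradict our standing assumptions; hence $h^0(E)=0$ and $E$ is stable.

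For the numerical bounds, I would feed stability back into the Chern-class formulas. The general bound of Lemma \ref{bound c2} for $d=3$ with $N_\sF^*$ locally free gives $c_2(N_\sF^*)\le d^2+2d+1=16$, which is the stated upper bound. For the lower bound, stability of $E$ (equivalently $h^0(E)=0$) must be turned into a restriction on $c_2(E)=c_2(N_\sF^*)-6$. The cleanest route is via the Bogomolov-type inequality for stable rank 2 bundles, $c_1(E)^2-4c_2(E)\le 0$, which here reads $c_2(E)\ge 0$, i.e. $c_2(N_\sF^*)\ge 6$; but this is weaker than the claimed $c_2(N_\sF^*)\ge 10$. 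To reach $10$ I would instead argue that a stable bundle $E$ with $c_1(E)=0$ and small $c_2(E)$ cannot sit as a subsheaf of $\Omega^1_{\p3}(3)$ with torsion-free, one-dimensional cokernel: specifically, the cases $c_2(E)=0,1,2,3$ (i.e. $c_2(N_\sF^*)=6,7,8,9$) must each be excluded by examining $h^0(\IC(2))$, the degree $\deg(C)=9-c_2(E)$ and genus $p_a(C)=8-3c_2(E)$ from display \eqref{eq-deggenus}, and ruling out the resulting curves — for instance small $c_2(E)$ forces $\deg(C)$ large with $p_a(C)$ negative and incompatible with the section-counting $h^0(N_\sF^*(1))=h^0(\Omega^1_{\p3}(1))=0$ from Lemma \ref{L:CH}, together with the stability just proved.

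The main obstacle I anticipate is the lower bound $c_2(N_\sF^*)\ge 10$: stability alone only yields $c_2(E)\ge 0$, so the genuine work is eliminating the intermediate values $c_2(E)\in\{0,1,2,3\}$. I expect this to require, for each value, producing the forbidden geometric configuration (a curve $C$ of the predicted degree and genus that cannot be cut out by $\phi^\vee$ from $\Omega^1_{\p3}$), most likely via the same monad/syzygy analysis used at the end of the proof of Theorem \ref{thm-deg2}, where one writes $E$ as the cohomology of a monad and checks that the induced map has a degeneracy locus of the wrong dimension. The stability step, by contrast, should follow the template already laid down for degree 2.
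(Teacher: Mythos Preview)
Your proposal has two genuine gaps and one arithmetic slip that, once fixed, dissolves most of the difficulty you anticipate.

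\textbf{The Chern class computation.} For a rank $2$ sheaf one has $c_2(F(k))=c_2(F)+k\,c_1(F)+k^2$, so with $E=N_\sF^*(3)$ and $c_1(N_\sF^*)=-6$ you get $c_2(E)=c_2(N_\sF^*)-18+9=c_2(N_\sF^*)-9$, not $c_2(N_\sF^*)-6$. This matters because, once stability is in hand, the lower bound is \emph{immediate}: a stable rank $2$ bundle on $\p3$ with $c_1=0$ satisfies $c_2\ge1$ (Hartshorne, \cite[Lemma~3.2]{H1}), hence $c_2(N_\sF^*)\ge10$. There is no need to eliminate the values $c_2(E)\in\{0,1,2,3\}$ by monad or syzygy analysis; that entire ``main obstacle'' you flag disappears.

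\textbf{The stability argument.} Your sketch claims that a section of $E$ forces either a splitting or a contradiction with $\sing(\sF)$ having pure dimension~$1$. Neither implication holds as stated: a nontrivial $\sigma\in H^0(N_\sF^*(3))$ simply exhibits $\sF$ as a sub-foliation of a codimension-one distribution of degree~$1$, which is perfectly compatible with $N_\sF^*$ being locally free and non-split. The paper proceeds in two steps. First it shows $h^0(N_\sF^*(2))=0$: a section would give a degree-$0$ distribution containing $\sF$, which by \cite[Proposition~7.1]{CCJ} is either contact (nowhere-vanishing section, forcing $N_\sF^*$ to split) or a pencil of planes (section vanishing on a line $L$; then $N_\sF^*$ is an extension of $\mathcal{I}_L(2)$ by $\op3(-2)$, hence splits by \cite[Corollary~1.2]{H1}). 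Second, with $h^0(N_\sF^*(2))=0$ in hand, a hypothetical $\sigma\in H^0(N_\sF^*(3))$ vanishes along a curve $S$ and produces a degree-$1$ distribution $\G$ containing $\sF$; one then invokes the classification of degree-$1$ distributions from \cite[Section~8]{CCJ} to constrain $\deg(S)\le3$ and compares with $p_a(S)=1-2\deg(S)$ (coming from the Serre construction on $E$), obtaining a contradiction case by case. Your outline misses both the preliminary vanishing $h^0(N_\sF^*(2))=0$ and the substantive geometric input from the classification of low-degree codimension-one distributions; the ``template from degree~$2$'' you invoke does not transfer, because in degree~$2$ non-stability was ruled out by a direct Chern-class count, not by the mechanism you describe.
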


\begin{proof}
We start by showing that $h^0(N_\sF^*(2))=0$. Suppose that $h^0(N_\sF^*(2))\neq 0$; a nontrivial section $\sigma\in H^0(N_\sF^*(2))$ induces a codimension one distribution of degree 0 on $\p3$
$$ \G: 0 \to \O(-2) \to \Omega^1_{\p3} \to \Omega_{\G} \to 0 $$
which contains $\sF$.  Indeed, having $H^0(N_\sF^*(1))=0$ by Lemma \ref{L:CH}-(i), $\sigma$ cannot vanish in codimension 1. Then, by the classification of such distributions (see \cite[Proposition 7.1]{CCJ}), either $\G$ is the non-singular contact distribution or $\G$ is a pencil of planes. In the first case, the section $\sigma:\O(-2)\to N_\sF^*$ cannot vanish, which implies that $N_\sF^*$ must split as a sum of line bundles, contradicting our hypothesis. In the second case, the zero locus of $\sigma$ is a line since this is the singular set of the pencil of planes $\G$. We, therefore, have an exact sequence of the form
$$ 0\to \op3(-2) \to N_\sF^* \to \mathcal{I}_L( -4) \to 0 $$
where $L$ is a line in $\p3$. However, $L$ is a complete intersection curve, so \cite[Corollary 1.2]{H1} implies that $N_\sF^*$ would again  split as a sum of line bundles.


Notice that the stability of $N_\sF^*$ is equivalent to $h^0(N_\sF^*(3))=0$, since $N_\sF^*(3)$ is the normalization of $N_\sF^*$.

Let us suppose that, on the contrary, $N_\sF^*(3)$ admits a non-trivial global section, which we also denote by $\sigma$. Since $h^0(N_\sF^*(2))=0$, its cokernel must be a torsion free sheaf of rank 1, so it must be the twisted ideal sheaf of a curve $S$ in $\p3$, that is $\coker\sigma\simeq \mathcal{I}_S( -3)$.

In addition, $\sigma$ induces the commutative diagram in display \eqref{diag-xx} below, with the middle column being a codimension one distribution $\G$ of degree 1.

\begin{equation}\label{diag-xx}
\begin{split} \xymatrix{
& 0 \ar[d]& 0 \ar[d]\\
0 \ar[r] & \O(-3) \ar[r]^{\simeq} \ar[d]^\sigma & \O(-3) \ar[d] & & &  \\
0 \ar[r] & N_\sF^* \ar[r] \ar[d] & \Omega^1_{\p3} \ar[r] \ar[d] & \IC(2) \ar[r] \ar[d]^{\simeq} & 0\\
0\ar[r] & \mathcal{I}_S( -3 ) \ar[d] \ar[r] & \Omega_{\G} \ar[r] \ar[d] & \IC(2) \ar[r] & 0 \\
& 0 & 0 & 
} \end{split} \end{equation}


It follows from the bottom row of diagram \eqref{diag-xx} that $S \subset \sing(\G)$.  
 Indeed, being $N_\sF^*$ locally free implies, by Lemma \ref{n-loc free}, that the curve $C$ in the diagram is of pure dimension 1. Therefore, applying the functor $\mathcal{H}om(\cdot,\O)$ to the bottom row of this diagram, we have that $\mathcal{E}xt^2(\IC(2),\O) =0$ and an epimorphism
$$
\mathcal{E}xt^1(\Omega_{\G},\O) \simeq \mathcal{O}_W(3) \twoheadrightarrow \mathcal{E}xt^1(\mathcal{I}_S(-3),\O)  \simeq \mathcal{O}_S(3)
$$
implies the desired inclusion $S \subset W:=\sing(\G)$.  Let $S'$ denote the maximal 1-dimensional subscheme of $\sing(\G)$. According to the classification of codimension one distributions of degree 1  \cite[Section 8]{CCJ}, $S'$ is a curve of degree at most 3 with $p_a(S')=0$. It follows that $1\le\deg(S)\le3$, and $p_a(S)=1-2 \deg(S)$ \cite[Proposition 2.1]{H1}. Let us now look at each of the three possible cases.

\begin{itemize}
\item If $\deg(S)=1$, then $S$ is a line, contradicting $p_a(S)=-1$.

\item 
 If $\deg(S)=2$, then $p_a(S)=-3$, so $S$ is a double line. 
Since $S \subset S'$, then either $\deg(S')=2$ or $\deg(S')=3$. In the first case, we would have $S=S'$, but $p_a(S)=-3$ while $p_a(S')=0$, a contradiction. The second case contradicts Lemma \ref{sub-genusdeg1}.



\item If $\deg(S) =3$, then $S=S'$ but $p_a(S)=-5$ while $p_a(S')=0$.
\end{itemize}

The lower bound on $c_2(N_\sF^*)$ is a direct consequence of the fact that $N_\sF^*$ is stable via the Bogomolov inequality, see \cite[Lemma 3.2]{H1}. The upper bound is the one given in Lemma \ref{bound c2}.

\end{proof}

The next step is to rule out the possibilities $c_2(N_\sF^*)=16$ and $c_2(N_\sF^*)=15$, which requires a detailed analysis of the possible conormal sheaves and singular schemes. For this purpose, recall that every rank 2 locally free sheaf $F$ on $\p3$ with $c_1(F)=0$ is isomorphic to  the cohomology of a monad of the form
\begin{equation}\label{gen monad}
\bigoplus_{i=1}^s \op3(-c_i) \to
\bigoplus_{j=1}^{s+1} \op3(-b_j)\oplus\op3(b_j) \to
\bigoplus_{i=1}^s \op3(c_i),
\end{equation}
with $1\le c_1\le\dots\le c_s$ and $0\le b_1 \le\dots\le b_{s+1}$. Furthermore, recall also that a sheaf $F$ on $\mathbb{P}^n$ is $k$-regular (in the sense of Castelnuovo--Mumford) if $h^p(F(k-p))=0$ for every $p>0$; moreover, every $k$-regular sheaf is also $k'$-regular for every $k'\ge k$.

With these facts in mind, we now state a useful technical result.

\begin{Lemma}\label{l:reg}
Let $F$ be a stable rank 2 locally free sheaf on $\p3$ with $c_1(F)=0$.
\begin{enumerate}
\item If $c_2(F)=7$, then $F$ is 13-regular.
\item If $c_2(F)=6$, then $F$ is 10-regular. Furthermore, if $F$ is not isomorphic to the cohomology of one of the following two monads
\begin{equation}\label{bad monad 1}
 \op3(-2)^{ \oplus 2}\oplus\op3(-1) \to 
 \op3(-1)^{ \oplus 3}\oplus \op3^{ \oplus 4}\oplus  \op3(1)^{ \oplus 3} \to
\op3(1)\oplus \op3(2)^{ \oplus 2}
\end{equation}
\begin{equation}\label{bad monad 2}
\op3(-3)\oplus\op3(-1) \to 
\op3(-2)\oplus \op3^{ \oplus 4}\oplus\op3(2) \to
\op3(1)\oplus\op3(3)
\end{equation}
then $F$ is 8-regular.
\end{enumerate}
\end{Lemma}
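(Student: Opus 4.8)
The plan is to prove both regularity statements by establishing the vanishing of the relevant cohomology groups $h^p(F(k-p))$ for $p=1,2,3$ directly, using the fact that $F$ is a stable rank $2$ bundle with $c_1(F)=0$, together with the explicit monad structure in display \eqref{gen monad}. Since $F$ is self-dual up to twist (indeed $F^*\simeq F$ because $c_1(F)=0$), Serre duality gives $h^3(F(j))=h^0(F(-4-j))$ and $h^2(F(j))=h^1(F(-4-j))$, so the top cohomologies are controlled by stability: $h^0(F(j))=0$ for $j\le 0$ forces $h^3(F(j))=0$ for $j\ge -4$, which is far below the regularity thresholds in play. Thus the whole problem reduces to bounding $h^1(F(j))$ and $h^2(F(j))$, and by the self-duality these are interchanged, so it suffices to control the intermediate cohomology module $H^1_*(F):=\bigoplus_j H^1(F(j))$.

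First I would treat item (1). For a stable bundle with $(c_1,c_2)=(0,7)$, I would extract the numerical constraints on the monad \eqref{gen monad} from the Chern class data and from stability (which forces $c_i\ge 1$ and bounds the $b_j$), thereby bounding the twists appearing in the display terms of the monad. Reading off the cohomology of the monad then gives that $H^1(F(j))$ is a subquotient of cohomology of the sums of line bundles in \eqref{gen monad}, and $H^1(\op3(m))=0$ always while $H^2(\op3(m))$ and $H^3(\op3(m))$ vanish once $m$ is large enough. Tracking the worst-case twists produces the vanishing of $h^1(F(12))$ and $h^2(F(11))$ (and the already-known top vanishing), which is precisely $13$-regularity. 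The sharper threshold $13$ rather than something larger comes from the fact that $c_2=7$ forces the negative twists $c_i$ to be small.

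For item (2), with $(c_1,c_2)=(0,6)$, the same mechanism yields $10$-regularity unconditionally. The refinement to $8$-regularity is the delicate part: here the generic stable bundle of charge $6$ is $8$-regular, but there exist special bundles whose first cohomology module $H^1_*(F)$ is slightly larger or shifted, preventing $h^1(F(7))$ or $h^2(F(6))$ from vanishing. To identify exactly these exceptional bundles I would classify the minimal monads \eqref{gen monad} compatible with $(0,6)$ and isolate those whose term structure forces a surviving cohomology class in the offending degree; a Betti-number / $\chi$ count pins down that the only obstructions arise from the two monads displayed in \eqref{bad monad 1} and \eqref{bad monad 2}. Conversely, for every other admissible monad the relevant graded pieces of $H^1_*(F)$ vanish in degrees $\ge 7$, giving $8$-regularity.

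The main obstacle will be the refinement in item (2): establishing that the failure of $8$-regularity is equivalent to $F$ arising from \emph{exactly} those two monads, rather than merely producing a list that contains them. This requires a careful enumeration of the numerically admissible monads for charge $6$ together with the constraint that $F$ be a genuine stable bundle (not just a formal cohomology sheaf), and then a case-by-case cohomology computation showing that all but the two listed monads yield $h^1(F(7))=h^2(F(6))=0$. The bookkeeping of the self-dual monad \eqref{gen monad}, combined with the Castelnuovo--Mumford vanishing for the line-bundle summands, is routine but must be done uniformly so that no admissible configuration is overlooked.
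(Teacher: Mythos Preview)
Your approach is correct and is essentially what the paper does: bound regularity via the monad display and enumerate the admissible monads for the given $c_2$. The paper compresses this to a two-line argument by citing the ready-made regularity bound $k=2c_s+b_3+\cdots+b_{s+1}+c_1+\cdots+c_s-2$ from \cite[Theorem 3.2]{CMR} and the complete list of possible monads for stable rank~2 bundles with $c_1=0$, $c_2\le 8$ from \cite[Section 5.3]{HR}, then simply applying the formula to each monad in that list; your ``careful enumeration'' and ``tracking the worst-case twists'' are exactly those two cited results.
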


\begin{proof}
According to \cite[Theorem 3.2]{CMR}, the cohomology of a monad of the form \eqref{gen monad} is at least $k$-regular with
\begin{equation}\label{form reg}
k=2c_s+b_3+\cdots+b_{s+1}+c_1+\cdots+c_s-2.
\end{equation}
On the other hand, Hartshorne and Rao classify in \cite[Section 5.3]{HR} all possible monads for stable rank 2 bundles $F$ on $\p3$ with $c_1(F)=0$ and $c_2(F)\le 8$. The claims in the statement of the lemma are obtained simply by applying the formula in display \eqref{form reg} to all monads listed in \cite[Section 5.3]{HR}.
\end{proof}

In order to use the previous result, we introduce the notation $E_\sF:=N_\sF^*(3)$ for any foliation $\sF$ by curves of degree 3 of local complete intersection type. Note that $c_1(E_\sF)=0$, so $E_\sF$ is the normalization of the conormal sheaf $N_\sF^*$; in addition, we have $c_2(E_\sF)=c_2(N_\sF^*)-9$ and 
\begin{equation}\label{serre}
h^1(E_\sF(k)) = h^2(E_\sF(-4-k))=h^2(N_\sF^*(-k-1))=h^1(\IC(1-k))
\end{equation}
by Serre duality, where $C=\sing(\sF)$.

\begin{Prop}
There are no foliations by curves of local complete intersection type and degree 3 on $\p3$ with $c_2(N_\sF^*)=16$.
\end{Prop}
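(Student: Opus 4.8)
The plan is to assume that such a foliation $\sF$ exists and to reach a contradiction with the regularity of its normalized conormal sheaf. First I would note that $c_2(N_\sF^*)=16$ is not one of the two values ($8$ and $9$) realized by split conormal sheaves in degree $3$, so by Proposition \ref{prop-stable} the sheaf $N_\sF^*$ is stable; consequently $E_\sF=N_\sF^*(3)$ is a stable rank $2$ bundle with $c_1(E_\sF)=0$ and $c_2(E_\sF)=c_2(N_\sF^*)-9=7$. Lemma \ref{l:reg}(1) then shows that $E_\sF$ is $13$-regular, so $h^1(E_\sF(k))=0$ for every $k\ge 12$; in particular $h^1(E_\sF(13))=0$. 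This vanishing is the target we will eventually violate.

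Next I would pin down the singular curve $C=\sing(\sF)$. Formulas \eqref{eq:degZ} and \eqref{eq:gen} give $\deg(C)=2$ and $p_a(C)=-13$. Since every reduced curve of degree $2$ in $\p3$ has arithmetic genus at least $-1$, the curve $C$ cannot be reduced; and since $N_\sF^*$ is locally free, $C$ is a Cohen--Macaulay curve of pure dimension one. A nonreduced pure one-dimensional Cohen--Macaulay scheme of degree $2$ is necessarily a double (ribbon) structure on a line $L\subset\p3$. Writing $\mathcal{M}:=\mathcal{I}_L/\mathcal{I}_C$, which is a line bundle on $L\cong\mathbb{P}^1$, the exact sequence $0\to\mathcal{M}\to\mathcal{O}_C\to\mathcal{O}_L\to 0$ yields $\chi(\mathcal{O}_C)=\deg\mathcal{M}+2$; combined with $p_a(C)=-13$, i.e. $\chi(\mathcal{O}_C)=14$, this forces $\deg\mathcal{M}=12$, so $\mathcal{M}\cong\mathcal{O}_L(12)$.

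Finally I would produce the contradiction. Twisting the ribbon sequence by $-12$ and taking cohomology gives $h^0(\mathcal{O}_C(-12))=h^0(\mathcal{M}(-12))=h^0(\mathcal{O}_L)=1$, since $\mathcal{O}_L(-12)$ has no sections. Because $h^0(\op3(-12))=h^1(\op3(-12))=0$, the ideal sequence $0\to\IC(-12)\to\op3(-12)\to\mathcal{O}_C(-12)\to 0$ gives $h^1(\IC(-12))=h^0(\mathcal{O}_C(-12))=1$. Now the Serre-duality identity \eqref{serre}, evaluated at $k=13$, reads $h^1(E_\sF(13))=h^1(\IC(-12))=1\ne 0$, contradicting the $13$-regularity established in the first step. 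Hence no foliation of local complete intersection type and degree $3$ on $\p3$ with $c_2(N_\sF^*)=16$ exists.

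The delicate point is the middle step: one must argue that a degree $2$, pure one-dimensional, Cohen--Macaulay scheme of this very negative genus is forced to be a ribbon on a line and then compute the exact twist $\mathcal{M}\cong\mathcal{O}_L(12)$. Everything is tight, as the first nonvanishing of $h^0(\mathcal{O}_C(\,\cdot\,))$ occurs precisely in degree $-12$, which is exactly what is needed to violate $13$-regularity; consequently the bookkeeping of twists must be carried out carefully, and this is also the reason Lemma \ref{l:reg}(1) is stated with the sharp bound $13$.
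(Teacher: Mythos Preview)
Your proof is correct and follows essentially the same route as the paper's: both compute $\deg(C)=2$, $p_a(C)=-13$, recognize $C$ as a double structure on a line with $\mathcal{I}_L/\mathcal{I}_C\cong\mathcal{O}_L(12)$, and then use \eqref{serre} to obtain $h^1(E_\sF(13))=1$, contradicting the $13$-regularity from Lemma~\ref{l:reg}(1). You supply a bit more detail than the paper does---explicitly invoking Proposition~\ref{prop-stable} for stability, and justifying both the Cohen--Macaulay property of $C$ and the impossibility of $C$ being reduced---but the argument is the same.
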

\begin{proof}
Let $\sF$ be a foliation as in the statement of the lemma. According to the formulas in displays \eqref{eq:degZ} and \eqref{eq:gen}, the singular locus of $\sF$ is a curve $C$ of degree 2 and genus $-13$; let $L:=C_{\rm red}$, so that $C$ is a multiplicity 2 structure on the line $L$. Following \cite[Proposition 1.4 and Remark 1.5]{N}, we must have the exact sequence
$$ 0\to \mathcal{O}_L(12) \to \mathcal{O}_C \to \mathcal{O}_L \to 0, $$
thus, by the equalities in display \eqref{serre}, $h^1(E_\sF(13))=h^1(\IC(-12))=1$, meaning that $E_\sF$ is not 12-regular, thus contradicting the first part of Lemma \ref{l:reg} since $c_2(E_\sF)=7$.
\end{proof}

Let us now shift our attention to the case $c_2(N_\sF^*)=15$.

\begin{Lemma}\label{triple line}
Let $\sF$ be a foliation by curves of local complete intersection type and degree 3 on $\p3$. If $c_2(N_\sF^*)=15$, then $C$ is a multiplicity 3 structure on a line $L$ satisfying the exact sequence
$$ 0\to \mathcal{O}_L(a)\oplus\mathcal{O}_L(c) \to \mathcal{O}_C \to \mathcal{O}_L \to 0, $$
with $(a,c)$ equal either to $(1,7)$ or to $(2,6)$. 
\end{Lemma}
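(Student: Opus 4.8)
The plan is to extract the numerical invariants of $C$, prove that its support is a single line, invoke the structure theory for multiplicity structures on a line, and finally use the regularity of the normalized conormal bundle to isolate the two admissible splitting types.

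I would begin with the invariants. As $\sF$ is of local complete intersection type, $N_\sF^*$ is locally free (Lemma \ref{n-loc free}); since in the defining sequence \eqref{pfaff2} the quotient $\mathcal{I}_C(2)=\Omega^1_{\p3}/N_\sF^*$ has projective dimension $1$, the sheaf $\mathcal{O}_C$ has projective dimension $2$, so by Auslander--Buchsbaum, together with the purity from Lemma \ref{n-loc free}, $C$ is Cohen--Macaulay of pure dimension $1$. Substituting $d=3$ and $c_2(N_\sF^*)=15$ into \eqref{eq:degZ} and \eqref{eq:gen} yields $\deg(C)=3$ and $p_a(C)=-10$. Writing $E_\sF:=N_\sF^*(3)$, Proposition \ref{prop-stable} shows it is stable with $c_1=0$ and $c_2=6$, so Lemma \ref{l:reg} gives that $E_\sF$ is $10$-regular, and the identity \eqref{serre}, $h^1(E_\sF(k))=h^1(\mathcal{I}_C(1-k))$, is available throughout.

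Next I would show that $C$ is supported on a line. By the classification of locally Cohen--Macaulay space curves of degree $3$ in \cite{N}, the only ones whose arithmetic genus is as negative as $-10$ are multiplicity-$3$ structures on a line and disjoint unions of a thick double line with a skew line; all reduced configurations, and all unions involving a conic, have $p_a\ge -2$. The disjoint union is excluded cohomologically: $10$-regularity forces $h^1(E_\sF(9))=0$, which by \eqref{serre} reads $h^0(\mathcal{O}_C(-8))=0$, whereas a double line thick enough to push the total genus down to $-10$ satisfies $h^0(\mathcal{O}_C(-8))\ne 0$. Hence $C$ is a multiplicity-$3$ structure on a line $L$, and \cite[Proposition 1.4 and Remark 1.5]{N} provides the exact sequence $0\to\mathcal{O}_L(a)\oplus\mathcal{O}_L(c)\to\mathcal{O}_C\to\mathcal{O}_L\to0$. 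Computing Euler characteristics on $L\cong\p1$ gives $p_a(C)=-a-c-2$, so the genus $-10$ forces $a+c=8$; I normalize $a\le c$.

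Finally I would narrow the splitting type, and I expect this to be the main obstacle. From the sequence above one gets, for every $k\ge2$, $h^0(\mathcal{O}_C(1-k))=\max(a+2-k,0)+\max(c+2-k,0)$, so by \eqref{serre} the whole tail of the Rao function $k\mapsto h^1(E_\sF(k))$ is determined by $(a,c)$. The vanishing $h^0(\mathcal{O}_C(-8))=0$ already rules out $c\ge8$, leaving $(a,c)\in\{(1,7),(2,6),(3,5),(4,4)\}$. All four pairs force $h^1(E_\sF(2))=h^0(\mathcal{O}_C(-1))=8$, but $(3,5)$ and $(4,4)$ make $E_\sF$ moreover $8$-regular, whereas $(1,7)$ and $(2,6)$ are precisely the two non-$8$-regular cases, matching the two exceptional monads \eqref{bad monad 1} and \eqref{bad monad 2}. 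Thus discarding $(3,5)$ and $(4,4)$ amounts to checking that no stable bundle with $c_1=0$, $c_2=6$ combines $8$-regularity with $h^1(E_\sF(2))=8$. This cannot be read off from the genus count or from the $10$-regularity bound alone, since both pairs survive those; it seems to require the explicit Hartshorne--Rao list of monads for $c_2=6$ bundles (the same input used in Lemma \ref{l:reg}) to verify the incompatibility, which is why I regard this as the delicate step of the argument.
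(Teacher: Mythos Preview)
Your overall strategy matches the paper's, but the final step contains a genuine gap, and the fix is much simpler than the route you sketch.

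You correctly arrive at $a+c=8$, but you then treat $(a,c)$ as an unconstrained pair and propose to eliminate $(3,5)$ and $(4,4)$ by confronting $8$-regularity with $h^1(E_\sF(2))=8$ via the Hartshorne--Rao monad tables. This is unnecessary, and you do not actually carry it out. The point you are missing is that Nollet's classification of locally Cohen--Macaulay \emph{triple} lines (this is \cite[Section~2]{N}; the reference \cite[Proposition~1.4 and Remark~1.5]{N} you cite is for \emph{double} lines) gives the filtration in the specific form
\[
0\to \mathcal{O}_L(a)\oplus\mathcal{O}_L(2a+b)\to\mathcal{O}_C\to\mathcal{O}_L\to 0,\qquad a\ge -1,\quad b\ge 0,
\]
so that $c=2a+b\ge 2a$. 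With $3a+b=8$ this leaves exactly the four pairs $(a,c)\in\{(-1,9),(0,8),(1,7),(2,6)\}$; the pairs $(3,5)$ and $(4,4)$ simply do not occur as invariants of a triple line, since they would require $b<0$. Your $10$-regularity argument ($h^0(\mathcal{O}_C(-8))=0$, hence $c\le 7$) then disposes of $(-1,9)$ and $(0,8)$, and the lemma follows. The ``delicate step'' you flag is therefore a phantom.

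A secondary gap: your case analysis of $C_{\rm red}$ omits the possibility that the support is a pair of \emph{intersecting} lines $L_1\cup L_2$ with a double structure on $L_1$. The paper treats this case separately: one gets $0\to\mathcal{O}_{L_1}(9)\to\mathcal{O}_C\to\mathcal{O}_{L_1\cup L_2}\to 0$, whence $h^1(E_\sF(10))=h^0(\mathcal{O}_C(-9))=1$, again contradicting $10$-regularity.
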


\begin{proof}
Let $\sF$ be a foliation by curves of local complete intersection type and degree 3 on $\p3$ with $c_2(N_\sF^*)=15$, so that $c_2(E_\sF)=6$. It follows from the formulas in displays \eqref{eq:degZ} and \eqref{eq:gen} that the singular locus of $\sF$ is a curve $C$ of degree 3 and genus $-10$. Letting $L:=C_{\rm red}$, three possibilities may occur:
\begin{itemize}
\item $L$ is the disjoint union of skew lines $L_1\sqcup L_2$, so that $C$ is the disjoint union of a multiplicity 2 structure on the line $L_1$, say, of genus $-9$ with a line. We, therefore, have an exact sequence
$$ 0 \to \mathcal{O}_{L_1}(8) \to \mathcal{O}_C \to \mathcal{O}_{L} \to 0, $$
so that $h^1(\IC(-8))=1$. It follows from the equality in display \eqref{serre} that $h^1(E_\sF(9))=1$, so $E_\sF$ is not 10-regular, contradicting the second part of Lemma \ref{l:reg}. 
\item $L$ is the union of intersecting lines $L_1\cup L_2$, with $L_1$ carrying a multiplicity 2 structure in $C$. This time we have the exact sequence
$$ 0 \to \mathcal{O}_{L_1}(9) \to \mathcal{O}_C \to \mathcal{O}_{L} \to 0, $$
so that $h^1(E_\sF(10))=h^1(\IC(-9))=1$, again contradicting the second part of Lemma \ref{l:reg}.
\item $L$ is a line. According to \cite[Section 2]{N}, multiplicity 3 structures of genus $-10$ on a line satisfy the exact sequence
$$ 0 \to \mathcal{O}_{L}(a)\oplus\mathcal{O}_{L}(2a+b) \to \mathcal{O}_C \to \mathcal{O}_{L} \to 0, $$
where $a\ge-1$, $b\ge0$ and $3a+b+2=10$. We, therefore, have four possibilities:
$$ (a,b)=(-1,11),~(0,8),~(1,5),~{\rm and}~(2,2). $$
In the first two cases we would have $h^1(E_\sF(9))=h^1(\IC(-8))=h^0(\mathcal{O}_{L}(2a+b-8)\ne0$, so $E_\sF$ is not 10-regular, again contradicting the second part of Lemma \ref{l:reg}. Setting $c:=2a+b$, we obtain the desired result.
\end{itemize}
\end{proof}

In particular, note that $h^1(E_\sF(7))\ne0$, so if $\sF$ is a foliation by curves of local complete intersection type and degree 3 with $c_2(N_\sF^*)=15$, then $h^1(E_\sF(7))=h^1(\IC(-6))\ne0$, so $E_\sF$ is not 8-regular. Therefore, $E_\sF$ must be the cohomology of a monad either as in display \eqref{bad monad 1} or as in display \eqref{bad monad 2}. This is the starting point of the proof of our last result in this section.

\begin{Prop}
There are no foliations by curves of local complete intersection type and degree 3 on $\p3$ with $c_2(N_\sF^*)=15$.
\end{Prop}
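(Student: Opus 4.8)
The plan is to rule out the two possibilities for $E_\sF:=N_\sF^*(3)$ separately, by computing the graded piece $h^1(E_\sF(k))$ of the cohomology module in two independent ways and comparing. On the monad side, since $E_\sF$ is stable we have $h^0(E_\sF(k))=0$ for $k\le0$, so for such $k$ the cohomology of the two short exact sequences realizing $E_\sF$ as the cohomology of a monad $A\to B\to C$ gives
$$ h^1(E_\sF(k)) = \dim\coker\big(H^0(B(k))\to H^0(C(k))\big) = h^0(A(k))-h^0(B(k))+h^0(C(k)), $$
which depends only on the twists appearing in the monad. Evaluating at $k=-1$ and $k=-2$, monad \eqref{bad monad 1} yields $h^1(E_\sF(-1))=6$ and $h^1(E_\sF(-2))=2$, whereas monad \eqref{bad monad 2} yields $h^1(E_\sF(-1))=7$ and $h^1(E_\sF(-2))=3$.

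On the singular-scheme side, the identity \eqref{serre} gives $h^1(E_\sF(-1))=h^1(\IC(2))$ and $h^1(E_\sF(-2))=h^1(\IC(3))$, and these I would bound below using the triple line $C$. Writing $L:=C_{\rm red}$ and combining the sequence from Lemma \ref{triple line} with the ideal sequence $0\to\IC\to\mathcal{I}_L\to\mathcal{I}_{L,C}\to0$, where $\mathcal{I}_{L,C}\simeq\mathcal{O}_L(a)\oplus\mathcal{O}_L(c)$ with $a+c=8$, I take cohomology after twisting. Since $h^0(\mathcal{I}_L(2))=7$, $h^1(\mathcal{I}_L(2))=0$ and $h^0(\mathcal{I}_{L,C}(2))=(a+3)+(c+3)=14$, the map $H^0(\mathcal{I}_L(2))\to H^0(\mathcal{I}_{L,C}(2))$ cannot be surjective, so $h^1(\IC(2))\ge14-7=7$. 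This already contradicts monad \eqref{bad monad 1}, for which $h^1(\IC(2))$ would have to equal $6$.

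To eliminate monad \eqref{bad monad 2} I would push the computation one degree higher. Here $h^0(\mathcal{I}_L(3))=16=h^0(\mathcal{I}_{L,C}(3))$, so a naive dimension count is inconclusive and one must instead exhibit enough cubics in $\IC$. The key geometric input is that a multiplicity-$3$ structure on $L$ lies in the second infinitesimal neighbourhood, i.e. $\mathcal{I}_L^3\subseteq\IC$; then the four cubics spanning $(\mathcal{I}_L^3)_3$ all lie in $H^0(\IC(3))$, giving $h^0(\IC(3))\ge4$. As $h^0(\mathcal{O}_C(3))=20=h^0(\op3(3))$ in both cases $(a,c)\in\{(1,7),(2,6)\}$, one has $h^1(\IC(3))=h^0(\IC(3))\ge4$, contradicting the value $3$ forced by monad \eqref{bad monad 2}. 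Since both monads are excluded and, by the remark following Lemma \ref{triple line}, $E_\sF$ must be one of them, no foliation with $c_2(N_\sF^*)=15$ can exist.

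The hard part will be precisely this last step. The clean corank argument that disposes of monad \eqref{bad monad 1} is unavailable in degree $3$, so genuine structural information about the non-reduced curve is required: one must control the low-degree part of $\IC$ and in particular justify the containment $\mathcal{I}_L^3\subseteq\IC$ (equivalently, that $C\subseteq 3L$ and that no unexpected degree-$3$ section of $\mathcal{I}_{L,C}$ is realized by a cubic through $C$). This is exactly where the explicit resolutions of triple lines from \cite{N} are indispensable, and verifying the bound $h^1(\IC(3))\ge4$ rigorously for both types $(a,c)$ is the delicate point of the argument.
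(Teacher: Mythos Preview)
Your argument is correct, and the ``hard part'' you flag is in fact easy to close: from the filtration $L\subset Y\subset C$ (with $Y$ the intermediate double line), one has $\mathcal{I}_L^2\subseteq\mathcal{I}_Y$ because the nilpotent ideal $\mathcal{I}_L/\mathcal{I}_Y$ squares to zero in $\mathcal{O}_Y$, and $\mathcal{I}_L\cdot\mathcal{I}_Y\subseteq\IC$ because $\mathcal{I}_Y/\IC\simeq\mathcal{O}_L(c)$ is an $\mathcal{O}_L$-module (this is exactly the content of Lemma \ref{triple line} via \cite{N}); combining, $\mathcal{I}_L^3\subseteq\mathcal{I}_L\cdot\mathcal{I}_Y\subseteq\IC$, so $h^0(\IC(3))\ge h^0(\mathcal{I}_L^3(3))=4$ and the contradiction with monad \eqref{bad monad 2} follows. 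Thus your proposal is a complete proof.

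The paper takes a different and somewhat slicker route: it works at the single twist $k=1$ and kills both monads at once. From $\chi(E_\sF(1))=-10$ and $h^3(E_\sF(1))=0$ one gets $h^1(E_\sF(1))=10+h^0(E_\sF(1))+h^2(E_\sF(1))$; the Hartshorne--Rao tables show $h^0(E_\sF(1))\ne0$ for \emph{both} bad monads, hence $h^1(\IC)=h^1(E_\sF(1))>10$. On the curve side, the sequence $0\to\IC\to\mathcal{I}_L\to\mathcal{O}_L(a)\oplus\mathcal{O}_L(c)\to0$ immediately gives $h^1(\IC)=a+c+2=10$, and one is done. Compared with your approach, the paper avoids any explicit monad cohomology computation (it only needs the qualitative fact $h^0(E_\sF(1))\ne0$) and does not need the containment $\mathcal{I}_L^3\subseteq\IC$; your approach, on the other hand, separates the two monads and uses finer information about the triple line, which is more hands-on but equally valid.
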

\begin{proof}
Let $\sF$ be a foliation as in the statement of the lemma, and set $E_\sF=N_\sF^*(3)$. We have that $h^3(E_\sF(1))=h^0(E_\sF(-3))=0$, thus, using Grothendieck--Riemann--Roch Theorem to compute $\chi(E_\sF(1))$, we also have
$$ h^1(E_\sF(1)) = h^0(E_\sF(1)) + h^2(E_\sF(1)) - \chi(E_\sF(1)) = 10 + h^0(E_\sF(1)) + h^2(E_\sF(1)). $$
As observed above, $E_\sF$ must be the cohomology of monad either as in display \eqref{bad monad 1} or as in display \eqref{bad monad 2}; in both case, $h^0(E_\sF(1))\ne0$, see \cite[Section 5.3]{HR}. It follows, using the equality in display \eqref{serre}, that $h^1(\IC)=h^1(E_\sF(1))>10$. 

On the other hand, let us examine the exact sequence
$$ 0 \to \IC \to \mathcal{I}_L \to \mathcal{O}_L(a)\oplus\mathcal{O}_L(c) \to 0 ,$$
which is equivalent to the sequence in the statement of Lemma \ref{triple line}, with $(a,c)=(1,7),(2,6)$. Since $h^1(\mathcal{I}_L)=0$, we have that
$$ h^1(\IC) = h^0(\mathcal{O}_L(a)) + h^0(\mathcal{O}_L(c)) = a+c+2 = 10, $$
providing the desired contradiction.
\end{proof}

We have so far proved the first part of Main Theorem \ref{classification-degree3}, concerning items (1), (2), and (3). 


\subsection{Foliations with reduced singular scheme}

We now move to the second part of Main Theorem \ref{classification-degree3}, making the further assumption that $\sing(\sF)$ is reduced.

 Recall that, by Proposition \ref{prop-stable}, if $N_\sF^*$ does not split as a sum of line bundles, then it is $\mu$-stable. Since a complete description of the split case has already been given in the beginning of this section, we will now assume that $N_\sF^*$ is $\mu$-stable. 

Recall furthermore that a locally free sheaf is said to have \emph{natural cohomology} if for each $p\in\Z$ there can be at most one $i=0,1,2,3$ such that $h^i(E(p))\ne0$ (see for example \cite{HH}); note that if $E$ is stable and has rank 2, then $\chi(E(-2))=h^2(E(-2))-h^1(E(-2))=0$, so stable rank 2 locally free sheaves with natural cohomology are necessarily instanton bundles. However, not every instanton bundle has natural cohomology ('t Hooft instanton bundle of charge at least 3 are the most well-known exceptions). In any case, instanton bundles with natural cohomology form an open subset in the moduli space of instanton bundles.

To be precise, we establish the following result. 
 
\begin{Prop}\label{exclusions}
Let $\sF$ be a foliation of local complete intersection type and degree 3 such that $\sing(\sF)$ is reduced and $N_\sF^*$ is $\mu$-stable. Then
\begin{itemize}
\item we cannot have that $c_2(N_\sF^*)=14$;
\item if $c_2(N_\sF^*)=13$ then $E_\sF=N_\sF^*(3)$ is an instanton bundle of charge 4 with natural cohomology;
\item if $c_2(N_\sF^*)=12$ then $E_\sF=N_\sF^*(3)$ is an instanton bundle of charge 3 with $h^0(E(1))\le1$.
\end{itemize}
\end{Prop}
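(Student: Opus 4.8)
The plan is to rule out each value of $c_2(N_\sF^*)$ in turn, translating the hypothesis into cohomological constraints on $E_\sF := N_\sF^*(3)$ and then deriving contradictions with the regularity bounds of Lemma \ref{l:reg} together with the connectedness/genus data coming from Theorem \ref{P:SLocus} and the displays \eqref{eq:degZ}, \eqref{eq:gen}, \eqref{serre}. Throughout I would exploit the fact that $\sing(\sF)=C$ is now \emph{reduced}, so that (by the final statement of Lemma \ref{L:CH} and the argument in Proposition \ref{con}) the graded pieces $h^1(\IC(k))$ are controlled by $h^0(\OC(k))$, which vanish in negative degrees for a reduced curve. This gives sharp vanishing of $h^1(E_\sF(k))$ for small $k$ via \eqref{serre}, which is the mechanism that forces higher regularity than allowed by Lemma \ref{l:reg}.

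For $c_2(N_\sF^*)=14$ (so $c_2(E_\sF)=5$), displays \eqref{eq:degZ} and \eqref{eq:gen} give $\deg(C)=4$ and $p_a(C)=-7$. Since $C$ is reduced of degree $4$, its arithmetic genus is bounded below by a constant far larger than $-7$ (a reduced curve of degree $d$ in $\p3$ satisfies $p_a \ge$ a bound incompatible with such a negative value; concretely the union of skew lines realizes the minimum genus and four skew lines give $p_a=-3$). I would use the reducedness together with the classification of low-degree reduced space curves to show no reduced curve of degree $4$ has $p_a=-7$, giving the contradiction directly; this case needs no regularity input.

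For $c_2(N_\sF^*)=13$ (so $c_2(E_\sF)=4$) I would first observe that stability (Proposition \ref{prop-stable}) plus $\chi(E_\sF(-2))=0$ and the natural-cohomology discussion preceding the proposition show that $E_\sF$ is an instanton bundle of charge $4$ as soon as $h^2(E_\sF(-2))=h^1(E_\sF(-2))=0$, i.e.\ $h^1(\IC(-1))=0$, which holds because $C$ is reduced. The content is then the \emph{natural cohomology} refinement: I would compute $h^1(E_\sF(k))=h^1(\IC(1-k))$ from \eqref{serre} and use reducedness to pin down the degrees in which cohomology can be nonzero, showing that failure of natural cohomology would force $E_\sF$ to violate the $8$-regularity alternative of Lemma \ref{l:reg}(2), i.e.\ force $E_\sF$ into one of the exceptional monads \eqref{bad monad 1}–\eqref{bad monad 2}, whose charge is $6$ rather than $4$ — a contradiction. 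The $c_2(N_\sF^*)=12$ case ($c_2(E_\sF)=3$) is analogous: stability and reducedness again give an instanton bundle of charge $3$, and the constraint $h^0(E(1))\le 1$ is extracted by using \eqref{serre} to translate $h^0(E_\sF(1))$ into a genus/degree count for the reduced curve $C$ of degree $6$, $p_a(C)=-1$, ruling out the special 't Hooft case $h^0(E(1))=2$.

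The main obstacle I expect is the natural-cohomology step for $c_2(E_\sF)=4$: merely knowing $E_\sF$ is an instanton does not force natural cohomology, and I must show that the \emph{specific} foliation geometry — i.e.\ that $E_\sF$ embeds in $\Omega^1_{\p3}(3)$ with reduced one-dimensional degeneracy — together with the reducedness of $C$ excludes exactly the 't Hooft instantons of charge $4$ and the special 't Hooft instantons of charge $3$. Concretely, the delicate point is to rule out the two exceptional monads of Lemma \ref{l:reg}(2): these have $h^0(E(1))\neq 0$, so I would leverage $h^0(E_\sF(1))=h^1(\IC)$ from \eqref{serre} and show that a nonzero section of $E_\sF(1)$ would produce, via the inclusion $E_\sF\hookrightarrow\Omega^1_{\p3}(3)$, a codimension-one subdistribution of degree $\le 1$ whose degeneracy structure is incompatible with $C$ being reduced of the prescribed degree and genus, paralleling the diagram-chase argument used in Proposition \ref{prop-stable}.
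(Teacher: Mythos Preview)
Your $c_2(N_\sF^*)=14$ argument is correct and equivalent to the paper's (the paper phrases it via $h^0(\OC)\ge8$ rather than $p_a=-7$, but both contradict reducedness of a degree-$4$ curve).

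The $c_2(N_\sF^*)=13$ and $12$ cases, however, have genuine gaps. First, you misread the identity in display \eqref{serre}: it gives $h^1(E_\sF(k))=h^1(\IC(1-k))$, so the instanton condition $h^1(E_\sF(-2))=0$ is equivalent to $h^1(\IC(3))=0$, \emph{not} $h^1(\IC(-1))=0$. Reducedness of $C$ gives the latter for free, but says nothing about the former; in fact $h^1(\IC(3))=0$ for five skew lines is exactly the ``no 5-secant'' condition, which is not automatic. Second, Lemma \ref{l:reg} concerns bundles with $c_2=6$ or $7$; the exceptional monads \eqref{bad monad 1}--\eqref{bad monad 2} have $c_2=6$, so none of this is available when $c_2(E_\sF)=4$ or $3$. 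Your proposed contradiction mechanism for the natural-cohomology step therefore does not apply.

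The paper's route is different and more direct. The key is the identity in display \eqref{h0(OZ)},
\[
h^0(\OC)=1+h^1(E_\sF(1))=3c_2(E_\sF)-7+h^0(E_\sF(1)),
\]
obtained from $\chi(E_\sF(1))=8-3c_2(E_\sF)$ together with $h^2(E_\sF(1))=h^3(E_\sF(1))=0$ and Proposition \ref{con}. Since $C$ is reduced, $h^0(\OC)$ is the number of connected components, bounded above by $\deg(C)$. For $c_2(E_\sF)=4$ this forces $h^0(\OC)\ge5=\deg(C)$, so $C$ is five skew lines; then Lemma \ref{lemma-disjoint} (which you do not invoke) shows directly that $E_\sF$ is an instanton of charge $4$ with natural cohomology. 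For $c_2(E_\sF)=3$ one gets $h^0(\OC)=2+h^0(E_\sF(1))$; since $\deg(C)=6$ and $p_a(C)=-1$, one checks that no reduced degree-$6$ curve with $4$ connected components has $p_a=-1$, hence $h^0(E_\sF(1))\le1$. The Hartshorne--Rao classification then excludes the generalized null-correlation bundle, leaving only instantons. Your proposed diagram-chase via a degree-$\le1$ subdistribution is unnecessary here.
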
 

 As stated in the Introduction, a locally complete intersection foliation by curves that is not a globally complete intersection may not be uniquely determined by its singular scheme. This phenomenon is illustrated by the example below, and it also shows that our classification of locally complete intersection foliation by curves of degree 3, provided in Main Theorem 2, is only a \textit{topological one}.
 
\begin{Ex} \label{example5lines}\rm
Let $C=L_1 \sqcup L_2 \sqcup L_3 \sqcup  L_4 \sqcup  L_5 $ be a disjoint union of five lines in $\p3$ which have no 5-secant line, that is, no line cuts all five lines in $C$.  Denote by $v_{ij}:\O \to T\p3 $ the foliation by curves  such that 
$\sing(v_{ij})= L_i \sqcup L_j$. 
For $1\leq i<j \leq \ell \leq 5$, let be
$Q_{ij\ell}=\{q_{ij\ell}=0\}$
the quadric surface containing $L_i \sqcup L_j \sqcup L_\ell$. 
In  \cite[Lemma 2]{ACM} the authors
show that every member of the following 2-dimensional family of degree 3 foliations by curves
$$
a_0q_{345}\cdot v_{12} + a_1q_{145}\cdot v_{23} + a_2q_{125}\cdot v_{34}: \O(-2) \to T\p3,
$$
where $(a_0:a_1:a_2)$ is a generic point in $\p2$, are singular on $C$. 
\end{Ex} 


Letting $E_\sF:=N_\sF^*(3)$, a foliation by curves of local complete intersection type and degree 3 can be described by the following  short exact sequence
$$ 0 \longrightarrow E_\sF(-3) \longrightarrow \Omega_{\p3}^1 \longrightarrow \mathcal{I}_C(2) \longrightarrow 0,
$$
with $C$ being a curve. Observe that $h^3(E_\sF(1))=h^0(E_\sF(-5))=0$ since $E_\sF$ is $\mu$-stable and $c_1(E_\sF)=0$, while $h^2(E_\sF(1))=h^1(E_\sF(-5))=0$ since $c_2(E_\sF)\le5$. It follows that
$$ \chi(E_\sF(1)) = h^0(E_\sF(1))-h^1(E_\sF(1)) = 8 - 3c_2(E_\sF). $$
Since $h^2(N_\sF^*(-2))=h^1(E_\sF(1))$, the argument in the proof of Proposition \ref{con}, yields
\begin{equation}\label{h0(OZ)}
h^0(\OZ) = 1 + h^1(E_\sF(1)) = 3c_2(E_\sF) - 7 + h^0(E_\sF(1)).
\end{equation}
This is the key fact to be explored in the proof of Proposition \ref{exclusions}.

We will also require the following additional fact.

\begin{Lemma}\label{lemma-disjoint}
Let $C$ be the disjoint union of five lines in $\p3$. Then, there exists an epimorphism
\begin{equation}\label{eq-surj-5disj}
\Omega_{\p3}^1 \stackrel{\varpi}{\longrightarrow} \mathcal{I}_C(2)
\end{equation}
if and only if $C$ has no 5-secant line. Furthermore, $\ker\varpi(3)$ is  an instanton bundle of charge 4 with natural cohomology.
\end{Lemma}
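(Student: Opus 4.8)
The plan is to prove the biconditional in two directions and then establish the structural claim about the kernel. Throughout, recall that the numerics are forced: the case at hand is $c_2(N_\sF^*)=13$, so $c_2(E_\sF)=4$, and by \eqref{eq:degZ}--\eqref{eq:gen} the singular curve has $\deg C=5$ and $p_a(C)=-4$, exactly the Hilbert polynomial of a disjoint union of five lines. So the hypotheses are compatible with Theorem \ref{P:SLocus}, and writing $E:=\ker\varpi(3)$ a Chern class computation on $0\to E(-3)\to\Omega^1_{\p3}\to\IC(2)\to0$ already gives $c_1(E)=0$ and $c_2(E)=4$.

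\emph{Forward direction (existence of $\varpi$ forces no $5$-secant).} Suppose $\ell$ is a line meeting all five components $L_1,\dots,L_5$. Since the $L_i$ are pairwise disjoint, $C\cap\ell$ is a reduced length-$5$ subscheme of $\ell$, so the natural surjection $\IC(2)\otimes\mathcal{O}_\ell\twoheadrightarrow\mathcal{I}_{C\cap\ell/\ell}(2)=\mathcal{O}_\ell(-3)$ is nonzero. Restricting the hypothetical epimorphism $\varpi$ to $\ell$ by right-exactness of $-\otimes\mathcal{O}_\ell$ and composing yields a surjection $\Omega^1_{\p3}|_\ell(2)\twoheadrightarrow\mathcal{O}_\ell(-3)$. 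But from the normal-bundle and Euler sequences $\Omega^1_{\p3}|_\ell(2)\simeq\mathcal{O}_\ell\oplus\mathcal{O}_\ell(1)^{\oplus2}$, whose summands all have nonnegative degree, so $\Hom(\Omega^1_{\p3}|_\ell(2),\mathcal{O}_\ell(-3))=0$ and no nonzero, let alone surjective, map exists. Hence no $5$-secant can occur; this is the clean half.

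\emph{Reverse direction (no $5$-secant gives an epimorphism).} First I would produce a candidate twisted vector field. Tensoring $0\to\IC\to\op3\to\OC\to0$ with $T\p3(2)$ and using $T\p3(2)|_{L_i}\simeq\mathcal{O}_{L_i}(4)\oplus\mathcal{O}_{L_i}(3)^{\oplus2}$, one computes $h^0(T\p3(2))=70$ and $h^0(T\p3(2)|_C)=65$, so $H^0(T\p3(2)\otimes\IC)\neq0$: there is a nonzero $\phi\in H^0(T\p3(2))$ vanishing on $C$, i.e.\ a morphism $\phi^\vee:\Omega^1_{\p3}\to\IC(2)$ whose image is $\mathcal{I}_{Z(\phi)}(2)$ with $C\subseteq Z(\phi)$. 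The task is to show that for a general such $\phi$ one has $Z(\phi)=C$ exactly, so that $\phi^\vee$ is surjective with locally free kernel. Since a locally free kernel forces $\deg Z(\phi)=18-c_2(N_\sF^*)=5=\deg C$, the only possible failure is that $\phi$ acquires an excess one-dimensional component $\ell'\not\subset C$. I would argue, by the same restriction-to-line mechanism as in the forward direction, that any forced excess line must be a common transversal of the $L_i$, i.e.\ a $5$-secant, so that the no-$5$-secant hypothesis guarantees the general member of $|T\p3(2)\otimes\IC|$ is base-point free off $C$ and vanishes precisely along $C$. \textbf{This base-point-freeness off $C$, equivalently the absence of forced excess components, is the main obstacle}, and is where the $5$-secant condition enters essentially; I expect to handle it by a semicontinuity/incidence-variety argument comparing the dimension of the family of fields vanishing on $C$ with the dimension of those vanishing additionally along a variable extra line.

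\emph{Structural claim (charge-$4$ instanton with natural cohomology).} Stability of $E$ follows from $c_1(E)=0$ together with $h^0(E)=h^0(N_\sF^*(3))=0$, the latter being part of the stability argument for non-split conormal sheaves, while $h^0(E(-2))=0$ used along the way is Lemma \ref{L:CH}(i). Because $C$ is five disjoint reduced lines, $h^0(\OC)=5$, so the identity \eqref{h0(OZ)}, namely $h^0(\OZ)=3c_2(E_\sF)-7+h^0(E_\sF(1))=5+h^0(E(1))$, forces $h^0(E(1))=0$, whence $h^1(E(1))=4$ from $h^0(\OZ)=1+h^1(E(1))$. The instanton condition $h^1(E(-2))=0$ and full natural cohomology then follow by chasing $0\to E(-3)\to\Omega^1_{\p3}\to\IC(2)\to0$ twist by twist: the cohomology of $\Omega^1_{\p3}(p)$ is given by Bott's formula and that of $\IC(p)$ is explicit for five disjoint lines, so for each $i$ exactly one twist contributes, with $h^1(E(1))=4$ the only intermediate cohomology and $h^3$ Serre-dual to $h^0$. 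I would close by remarking that $h^0(E(1))=0$ is precisely what distinguishes this bundle from a $4$-charge 't Hooft instanton, consistent with Main Theorem \ref{classification-degree3}.
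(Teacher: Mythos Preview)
Your forward direction is correct and is exactly the paper's argument: restrict to a putative $5$-secant $\ell$, use $\Omega^1_{\p3}|_\ell\simeq\mathcal O_\ell(-2)\oplus\mathcal O_\ell(-1)^{\oplus2}$, and observe that no quotient $\mathcal O_\ell(-3)$ can exist.

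The reverse direction is where the real gap lies. The paper does not prove this implication either; it simply cites \cite[Lemma~2]{ACM}. Your sketch, however, does not close. First, nothing forces an excess component of $Z(\phi)$ to be a line: a priori it could be a conic, a twisted cubic, or an embedded structure on one of the $L_i$. Second, even granting an excess line $\ell'$, the ``same restriction-to-line mechanism'' does not run in this direction: in the forward argument you start from a $5$-secant and obstruct the epimorphism, whereas here you would need the contrapositive statement that any line on which every $\phi\in H^0(T\p3(2)\otimes\IC)$ vanishes must meet all five $L_i$, which is a different (and harder) assertion. Third, you have not excluded isolated or embedded base points, which also obstruct $Z(\phi)=C$. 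You are right that this is the main obstacle, but the incidence/semicontinuity outline you give is not yet an argument.

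For the structural claim, your deduction of $h^0(E(1))=0$ via the identity $h^0(\mathcal O_C)=5+h^0(E(1))$ is clean and in fact more explicit than the paper. But your assertion that ``the cohomology of $\IC(p)$ is explicit for five disjoint lines'' hides exactly the same difficulty as above. Twisting the defining sequence by $\op3(1)$ gives an isomorphism
\[
H^0(\IC(3))\;\stackrel{\sim}{\longrightarrow}\;H^1(E(-2)),
\]
so the instanton condition $h^1(E(-2))=0$ is \emph{equivalent} to $h^0(\IC(3))=0$. This vanishing is false for arbitrary five skew lines (e.g.\ five lines in one ruling of a smooth quadric, which automatically admit a $5$-secant) and requires the no-$5$-secant hypothesis; the paper obtains it from \cite[Lemma~5]{ACM}. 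By contrast, $h^1(E(2))=h^1(\IC(-3))=0$ really is automatic, and together with $4$-regularity and your $h^0(E(1))=0$ this gives natural cohomology once the instanton condition is in place.
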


\begin{proof}
If $C$ has no 5-secant line, then the result is proved in \cite[Lemma 2]{ACM}.

Let us suppose the existence of a 5-secant line $L$. Directly from the canonical exact sequence of the canonical sheaf of $C$, we have, restricting on $L$, a surjective map
$$
(\mathcal{I}_C)_{|L} \longrightarrow \mathcal{O}_L(-5).
$$
If a surjective map as in (\ref{eq-surj-5disj}) exists, then, restricting to the line $L$, we would obtain the following composition, which is again surjective
$$
(\Omega_{\P^3}^1)_{|L} \longrightarrow (\mathcal{I}_C(2))_{|L} \longrightarrow \mathcal{O}_L(-3).
$$
Being $(\Omega_{\P^3}^1)_{|L} \simeq \O(-2) \oplus \O(-1)^2$, we get a contradiction. 

Now let $F:=\ker\varpi(3)$, and note that $c_1(F)=0$, $c_2(F)=4$ and $c_3(F)=0$; in other words,
$$ 0 \to F(-3) \to \Omega_{\p3}^1 \stackrel{\varpi}{\to} \mathcal{I}_C(2) \to 0, $$
is a foliation by curves of local complete intersection type and degree 3. Since $Z$ is not an ACM curve, $F$ does not split as a sum of line bundles, and therefore, by Proposition \ref{prop-stable}, $F$ must be stable. As it was observed in \cite[Lemma 5]{ACM}, the fact that $C$ does not have a 5-secant line implies that $h^i(\mathcal{I}_C(3))=0$ for $i=0,1$. It follows that $h^1(F(-2))=0$, forcing $F$ to be an instanton bundle; in addition, we have that 
$$ h^1(F(2))=h^2(F(-6))=h^1(\IZ(-3))=0, $$
thus $F$ has natural cohomology, since $h^1(F(t))=0$ for $t\ge3$ as every instanton bundle of charge 4 is 4-regular.
\end{proof}

We are finally in the position to complete the proof of Proposition \ref{exclusions}. We go over each item separately.

First, if $c_2(N_\sF^*)=14$, we
have from the second item in Theorem \ref{P:SLocus} that $\deg(C)=4$ while $h^0(\mathcal{O}_C)\geq8$ by the formula in display \eqref{h0(OZ)}; but this is impossible for a reduced curve.

Similarly, if $c_2(N_\sF^*)=13$, then we have that $\deg(C) = 5$ and $p_a(C)=-4$, while $h^0(\mathcal{O}_C)=5+h^0(E_\sF(1))\ge 5$. If $C$ is reduced, we must have that $Z$ consists of 5 skew lines; Lemma \ref{lemma-disjoint} then implies that $E_\sF$ must be an instanton bundle of charge 4 with natural cohomology.

Finally, if $c_2(N_\sF^*)=12$, then we have that $\deg(C) = 6$ and $p_a(C)=-1$, while $h^0(\mathcal{O}_C)=2+h^0(E_\sF(1))$. Note that a curve of degree 6 with 4 connected components, must either be the disjoint union of two conics and two lines, or the disjoint union of one cubic and three lines. Since none has arithmetic genus equal to $-1$, we conclude that $h^0(E_\sF(1))\le1$. This restriction not only rules out $E_\sF$ being a special 't Hooft instantons of charge 3, but also the generalized null-correlation bundle given as the cohomology of a monad of the form
$$ \op3(-2) \to \op3(-1)\oplus  \op3^{\oplus2}\oplus\op3(1) \to \op3(2). $$
Since, according to the classification by Hartshorne and Rao \cite[Table 5.3]{HR}, a stable rank 2 bundle $E$ with $c_1(E)=0$ and $c_1(E)=3$ is either an instanton bundle or a generalized null-correlation bundle as above, this completes the proof of Proposition \ref{exclusions}.

We complete this section by observing that it is also possible to discard some of the possible foliations by curves of local complete intersection type and degree 3 with $c_2(N_\sF^*)=13$ or 14 without the hypothesis of $\sing(\sF)$ being reduced. Here are two cases.

\begin{Lemma}
Let $E$ be an instanton bundle of charge 5 with natural cohomology. Then there are no foliations by curves $\sF$ such that $N_\sF^*=E(-3)$.
\end{Lemma}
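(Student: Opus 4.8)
The plan is to derive a contradiction from the numerical and cohomological constraints forced by the hypothesis $N_\sF^*=E(-3)$ with $E$ an instanton bundle of charge $5$ with natural cohomology. First I would record the basic invariants: since $c_2(E)=5$, we have $c_2(N_\sF^*)=c_2(E)+9=14$, and hence by the formulas in displays \eqref{eq:degZ} and \eqref{eq:gen} the singular scheme $C=\sing(\sF)$ is a curve of degree $\deg(C)=9-5=4$ and arithmetic genus $p_a(C)=8-3\cdot5=-7$. The foliation fits into the short exact sequence
$$ 0 \longrightarrow E(-3) \longrightarrow \Omega^1_{\p3} \longrightarrow \mathcal{I}_C(2) \longrightarrow 0, $$
so that $E_\sF=E$ and the cohomological dictionary in display \eqref{serre} applies, giving $h^1(\IC(1-k))=h^1(E(k))$.

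The heart of the argument is the natural cohomology hypothesis. For a charge $5$ instanton bundle $E$ with natural cohomology, there is exactly one twist in which $h^1$ is nonzero, and by stability together with $\chi(E(-2))=0$ one has $h^1(E(-2))=h^2(E(-2))=0$; the unique twist with $h^1\neq0$ is $k=0$, where $\chi(E)=-2c_2(E)+2=-8$ forces $h^1(E)=8$ (since $h^0(E)=h^2(E)=h^3(E)=0$ for a stable instanton). In particular $h^1(E(1))=0$ by naturality. I would then invoke the key formula in display \eqref{h0(OZ)}, namely $h^0(\OZ)=1+h^1(E_\sF(1))$, to conclude $h^0(\OZ)=1$, so that $Z$ (equivalently $C$, since we are in the locally free case and $R=0$) is connected.

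Now I would confront connectedness against the genus constraint. A connected curve $C$ of degree $4$ cannot have arithmetic genus as low as $-7$: for a connected, possibly non-reduced, dimension-one subscheme of $\p3$ of degree $d$, the arithmetic genus is bounded below well above $-7$ when $d=4$ (a connected degree $4$ curve has $p_a\geq -2$, attained by suitable multiplicity structures, and in the reduced connected case $p_a\geq0$). Thus the pair $(\deg(C),p_a(C))=(4,-7)$ is incompatible with $C$ being connected, yielding the desired contradiction. The main obstacle I anticipate is making the lower genus bound for a connected degree $4$ curve fully rigorous in the non-reduced case; I would handle this by the same device used in Lemma \ref{triple line} and the surrounding propositions, analyzing $C_{\rm red}$ and the possible multiplicity structures via \cite{N}, and checking in each case that a connected degree $4$ structure cannot reach genus $-7$ (equivalently, that $h^1(\mathcal{I}_C)$ stays too small), so that $h^1(E(1))=h^1(\mathcal{I}_C(-1))$ cannot vanish if $p_a(C)=-7$ with $C$ connected.

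Alternatively, and perhaps more cleanly, I would run the contradiction purely cohomologically: naturality forces $h^1(E(1))=0$, hence by \eqref{serre} $h^1(\IC(-1))=0$; but from the defining sequence twisted by $-3$ one computes $h^1(\IC(-1))$ directly and shows it must be positive when $p_a(C)=-7$ and $\deg(C)=4$ (for instance via $\chi(\mathcal{O}_C(-1))=\deg(C)\cdot(-1)+1-p_a(C)=-4+1+7=4$ together with $h^0(\mathcal{O}_C(-1))$ being small), contradicting the vanishing. This cohomological route sidesteps the genus-bound case analysis and is likely the shortest path to the contradiction; the step requiring the most care is pinning down $h^0(\mathcal{O}_C(-1))$ and $h^2(\mathcal{O}_C(-1))$ well enough to force $h^1(\IC(-1))>0$.
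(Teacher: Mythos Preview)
Your argument rests on a misreading of \emph{natural cohomology}. The definition used in the paper (cf.~\cite{HH}) is that for each twist $p$ at most one of the numbers $h^i(E(p))$ is nonzero; it does \emph{not} say that $h^1$ is nonzero in only one twist. For a charge $5$ instanton with natural cohomology one has
\[
\chi(E(t))=\tfrac{1}{3}(t+1)(t+2)(t+3)-5(t+2),
\]
so $\chi(E(-1))=-5$, $\chi(E)=-8$, $\chi(E(1))=-7$, $\chi(E(2))=0$, $\chi(E(3))=15$. Natural cohomology thus gives $h^1(E(-1))=5$, $h^1(E)=8$, $h^1(E(1))=7$, and $h^0(E(2))=h^1(E(2))=0$. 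In particular $h^1(E(1))=7\neq 0$, so the formula in display \eqref{h0(OZ)} yields $h^0(\OZ)=8$, not $1$; the curve is \emph{not} connected, and your genus/connectedness contradiction collapses. Your alternative cohomological route has the same flaw: $h^1(\IC(-1))=h^1(E(1))=7$, so there is nothing to contradict.

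The paper's proof is both shorter and aimed at a different point: since $\chi(E(2))=0$ and $E$ has natural cohomology, $h^0(E(2))=0$; the Euler sequence gives $E\otimes\Omega^1_{\p3}(3)\hookrightarrow E(2)^{\oplus4}$, whence $\Hom(E(-3),\Omega^1_{\p3})=H^0(E\otimes\Omega^1_{\p3}(3))=0$, so no such foliation can exist at all. Note that this kills the existence of any morphism $E(-3)\to\Omega^1_{\p3}$, which is stronger than showing a putative singular curve would be inconsistent. The twist $t=2$ is precisely where natural cohomology bites, not $t=1$.
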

\begin{proof}
If $E$ is a stable rank 2 locally free sheaf with $c_2(E)=5$, then $$\chi(E(2))=h^0(E(2))-h^1(E(2))=0.$$ If $E$ has natural cohomology, then $h^0(E(2))=h^1(E(2))=0$. However, $E\otimes\Omega^1_{\p3}(3)$ is a subsheaf of $E(2)^{\oplus 4}$, hence $\Hom(E(-3),\Omega^1_{\p3})=0$, so there can be no foliation by curves with $N_\sF^*=E(-3)$.
\end{proof}

\begin{Lemma}
Let $E$ be a 't Hooft instanton bundle of charge  4 or 5. Then there are no foliations by curves $\sF$ such that $N_\sF^*=E(-3)$.
\end{Lemma}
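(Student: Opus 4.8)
The plan is to argue by contradiction: suppose there is a foliation by curves $\sF$ of degree $3$ with $N_\sF^*=E(-3)$, $E$ a 't Hooft instanton bundle of charge $c_2(E)\in\{4,5\}$, and extract a contradiction from the distinguished section that a 't Hooft bundle carries. A first observation is that, in contrast with the preceding lemma, one cannot expect $\Hom(E(-3),\Omega^1_{\p3})=0$. Indeed, writing $E^\vee\cong E$ and using the 't Hooft presentation $0\to\O(-1)\to E\to\mathcal{I}_Y(1)\to0$, where $Y$ is the zero scheme of the defining section $\sigma\in H^0(E(1))$, one tensors by $\Omega^1_{\p3}(3)$ to get an inclusion $H^0(\Omega^1_{\p3}(2))\hookrightarrow\Hom(E(-3),\Omega^1_{\p3})$, and the left-hand space is $6$-dimensional. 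However, these extra homomorphisms factor through a line-bundle quotient of $E(-3)$, hence are not injective; so the real content of the lemma is that no \emph{injective} homomorphism with torsion-free cokernel exists.

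The next step is to exploit $\sigma$ itself. Since $E$ is 't Hooft we have $h^0(N_\sF^*(4))=h^0(E(1))=1$, so $\sigma$ is unique up to scale and $Y=(\sigma)_0$ is a disjoint union of $c_2(E)+1$ lines ($5$ lines when the charge is $4$, $6$ when it is $5$). Composing the inclusions $\O(-4)\hookrightarrow E(-3)=N_\sF^*\hookrightarrow\Omega^1_{\p3}$ yields a nonzero $\tau\in H^0(\Omega^1_{\p3}(4))$ whose saturation defines a codimension-one distribution $\G$ of degree $2$ containing $\sF$. The associated commutative diagram, built exactly as in display \eqref{diag-xx}, has left column $0\to\O(-4)\to N_\sF^*\to\mathcal{I}_Y(-2)\to0$ and presents $\Omega^1_{\p3}/\O(-4)$ as an extension of $\IC(2)$ by $\mathcal{I}_Y(-2)$. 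In particular the $c_2(E)+1$ pairwise disjoint lines of $Y$ lie in $\sing(\G)$, and $T_\sF=\O(-2)$ is a sub-line-bundle of the rank-$2$ sheaf $T_\G$.

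I would then close the argument by confronting this with the classification of degree-$2$ codimension-one distributions on $\p3$ from \cite{CCJ}, in the same spirit as the degree-$1$ step in the proof of Proposition \ref{prop-stable}: the one-dimensional part of the singular scheme of such a distribution is too constrained, in degree, arithmetic genus and number of connected components, to contain five or six pairwise disjoint lines. Concretely, one compares the degree of $\sing_1(\G)$ predicted by the Chern classes of $T_\G$ (which contains $T_\sF=\O(-2)$) against $\deg Y=c_2(E)+1$, and uses the genus and connectedness constraints of the classification to eliminate the configuration.

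The main obstacle is precisely this last confrontation, since the hypothesis does \emph{not} include that $\sing(\sF)$ is reduced, so one must control $\sing(\G)$ and understand how the singular curve $C=\sing_1(\sF)$ sits relative to $Y$. A decisive cross-check is the count \eqref{h0(OZ)}, which for a 't Hooft bundle gives $h^0(\OC)=3c_2(E)-6$, namely $6$ for charge $4$ and $9$ for charge $5$; together with the values $\deg(C)=9-c_2(E)$ and $\chi(\OC)=1-p_a(C)$ from Theorem \ref{P:SLocus}, this already shows $h^0(\OC)>\deg(C)$, so $C$ cannot be the reduced union $Y$ and must carry a non-reduced structure. This is exactly the sort of singular scheme that the degree-$2$ distribution classification must forbid. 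An alternative, cleaner endgame is available whenever $C$ does reduce to a disjoint union of lines: by Lemma \ref{lemma-disjoint} the kernel of $\Omega^1_{\p3}\to\IC(2)$ would be an instanton bundle with natural cohomology, contradicting the fact that a 't Hooft instanton bundle of charge $4$ (or $5$) does not have natural cohomology.
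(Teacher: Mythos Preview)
Your strategy is exactly the paper's: take the 't Hooft section $\sigma\in H^0(E(1))$, compose $\O(-4)\to E(-3)\to\Omega^1_{\p3}$ to produce a codimension-one distribution $\G$ of degree $2$ whose singular scheme $W$ contains the zero locus $C_0=(\sigma)_0$ (i.e.\ the $n+1$ skew lines, $n=4$ or $5$), and then invoke the classification of degree-$2$ distributions from \cite[Section 9]{CCJ}.

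The gap is that you stop at the sketch level precisely where the actual work lies. The paper carries out the confrontation you only announce: from \cite[Section 9]{CCJ} one has $\deg(W)\le 7$, and for each admissible value $\deg(W)\in\{5,6,7\}$ (the only ones compatible with $\deg(C_0)=5$ in charge $4$) the classification pins down $p_a(W)$. The paper then checks that no curve containing five skew lines, even after adding an extra line or a double or triple structure on one of them, can achieve that genus. For instance, when $\deg(W)=6$ the classification gives $p_a(W)=3$, whereas $W=C_0\cup L$ with $L$ a line yields $p_a(W)=k-5\le 0$ ($k$ the number of intersection points), and a double structure on one of the five lines gives $p_a(W)\le -4$; the cases $\deg(W)=5$ and $7$ are handled by analogous genus counts. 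The charge-$5$ case is then easier since $\deg(C_0)=6$ forces $\deg(W)\in\{6,7\}$. Your formulation via ``Chern classes of $T_\G$, which contains $T_\sF=\O(-2)$'' is not the right handle here: what is needed is the explicit list of pairs $(\deg(W),p_a(W))$ from \cite[Theorem 9.5]{CCJ}, not a Chern-class inequality.

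Your side computation of $h^0(\OC)$ for $C=\sing(\sF)$ and the observation that $C$ must be non-reduced are correct but lie off the critical path: the paper never analyses $C$; the contradiction is extracted entirely from $W=\sing(\G)$ and the skew lines $C_0\subset W$. Likewise the appeal to Lemma~\ref{lemma-disjoint} covers only the reduced case and cannot close the argument in general.
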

\begin{proof}
If $E$ be a 't Hooft instanton bundle of charge $n$, then $E(1)$ has a global section that vanishes along $n+1$ skew lines. We will discuss the case $n=4$ in detail; the case $n=5$ can be dealt with similarly (the argument is even simpler).

Assume that 
$$ 0 \to E(-3) \stackrel{\varphi}{\to} \Omega^1_{\p3} \to \IC(2) \to 0 $$
defines a foliation by curves, and let  $\sigma\in H^0(E(1))$ be a nontrivial global section. The composition of monomorphisms $$ \op3(-4) \stackrel{\sigma}{\to} E(-3) \stackrel{\varphi}{\to} \Omega^1_{\p3} $$
induces a codimension 1  distribution  of degree 2
$$ 0 \to G^* \to T\p3 \to \mathcal{I}_W(4) \to 0, $$
where $G:=\coker(\varphi\circ\sigma)$, and $W$ is its singular scheme. Setting $C_0:=(\sigma)_0$ (in the case at hand, $C_0$ consists of 5 skew lines), observe that $C_0\subseteq W$. According to the classification of codimension one distributions of degree 2 studied in \cite[Section 9]{CCJ}, we know that $\deg(W)\le7$, so there are 3 possibilities to be considered.

First, if $\deg(W)=5$, then actually $W=C_0$; however, either $p_a(W)=1$ or $p_a(W)=2$, contradicting that $p_a(C_0)=-4$.

If $\deg(W)=6$, then $p_a(W)=3$ (cf. \cite[Theorem 9.5]{CCJ}), and one must consider two possibilities. First, assume that $W$ is reduced, so that $W=C_0\cup L$, where $L$ is a line, implying that $p_a(W)=k-5\le0$, where $k$ is the number of points in $C_0\cap L$ (note that $0\ge k\ge5$). If $W$ is not reduced, then $W=C'\cup\tilde{L}$, where $C'\subset C_0$ consists of 4 skew lines, and $\tilde{L}$ is a double structure on the remaining line $C_0\setminus C'$, which leads to $p_a(W)=p_a(\tilde{L})-4\le-4$. We end up with contradictions in both cases.

If $\deg(W)=7$, then $p_a(W)=5$ (cf. \cite[Theorem
9.5]{CCJ}), and one must again consider two possibilities. 
Either $W=C_0\cup Q$ where $Q$ is a degree 2 scheme (possibly non-reduced, so $p_a(Q)\le0$), or $W=C'\cup \tilde{L}$, where $C'\subset C_0$ consists of 4 skew lines, and $\tilde{L}$ is a triple structure on the remaining line $C_0\setminus C'$ (so $p_a(\tilde{L})\le1$). In both situations, $p_a(W)\le1$, providing a contradiction as in the previous paragraph.
\end{proof}


\subsection{Existence of foliations}\label{sec-examples}

 The goal of this section is to show that for each $n\in\{1,2,3,4,5\}$, there is a foliation by curves $\sF$ of degree 3 on $\p3$ such that $N_\sF^*(3)$ is an instanton bundle of charge $n$. 
This will allow us to complete the proof of Main Theorem \ref{classification-degree3} and fill out all the information in Table \ref{table deg 3}.

Let  $E$ be an instanton bundle of charge $n$.  
Note that twisting the Euler sequence for the cotangent bundle by $E(3)$ we obtain
$$ 0 \to E\otimes\Omega^1_{\p3}(3) \to  E(2)^{\oplus4} \to E(3) \to 0 $$
from which we conclude that
\begin{equation}\label{dim-hom}
\hom(E(-3),\Omega_{\p3}^1) \geq 4\cdot h^0(E(2))-h^0(E(3)) = 40-11c_2(E),  \end{equation}
with the last equality following from the fact that $h^0(E(p)=\chi(E(p))$ for $p\ge2$ since instanton bundles of charge $n$ are $n$-regular. This last fact also allows us to easily compute $h^1(E(p))$ for every $p\in\Z$; we have (we only write the dimensions of the nonzero cohomologies):
\begin{itemize}
\item for $n=1$, $h^1(E(-1))=1$;
\item for $n=2$, $h^1(E(-1))=h^1(E)=2$;
\item for $n=3$, $h^1(E(-1))=3$, $h^1(E)=4$ and $h^1(E(1))=1+h^0(E(1))$;
\end{itemize}
In addition, if $E$ is an instanton bundle of charge 4 with natural cohomology, then $h^1(E(-1))=4$, $h^1(E)=6$ and $h^1(E(1))=4$.

Moreover, the dimension of the space of global sections of  $E(1)$ can be bounded, as the next result shows.

\begin{Prop}
Let $E$ be a stable rank two vector bundle with $c_1(E)=0$ on $\P^3$. Then either $E$ is a null-correlation bundle, or $h^0(E(1))\leq 2$.
\end{Prop}
\begin{proof}
Let us suppose that $E$ is not a null-correlation bundle. This implies that, restricted to the general hyperplane $H \subset \P^3$, the vector bundle $F := E_{|H}$ is stable as well, see \cite[Theorem 3]{Barth}. Consider the short exact sequence
$$
0 \rightarrow E \rightarrow E(1) \rightarrow F(1) \rightarrow 0.
$$
Being $F$ stable, we have that  $H^0(F)=0$ and, moreover, the following short exact sequence
$$
0 \rightarrow F \rightarrow F(1) \rightarrow \mathcal{O}_L(1)^{\oplus2}\rightarrow 0,
$$
where the surjective map is given by the splitting type of $F$ on a generic line $L$ of $H \simeq \P^2$. Indeed, being $F$ stable, we have that $F_{|L} \simeq \mathcal{O}_L^2$ by the Grauert--M\"ulich theorem (see \cite[Corollary 2 of Theorem II.2.1.4]{OSS}).
Taking the cohomology of the latter short exact sequence, we get that $h^0(F(1))\leq 4$.\\
Recall that $F$ must have at least one jumping line. Indeed, if not, $F$ would be uniform and therefore homogeneous. In this case, $F$ (see \cite{VdeVen}) would either be a direct sum of line bundles, impossible being $F$ stable, or a twist of the tangent bundle, impossible because $c_1(F)=0$.

Hence, having at least one jumping line $\ell$ for $F$, we can consider the following elementary transformation
$$
0 \rightarrow G_\ell \rightarrow F \rightarrow \mathcal{O}_{\ell}(-\alpha) \rightarrow 0,
$$
with $\alpha>0$.\\

In particular, $c_1(G_\ell)=-1$ and $h^0(G_\ell)=0$, which means that $G_\ell$ is stable as well.\\
From here, two possibilities arise:
\begin{description}
\item[Case 1] the bundle $G_\ell$ admits a jumping line. i.e. a line $\ell_1$ such that $(G_\ell)_{|\ell_1} \simeq \mathcal{O}_{\ell_1}(-\beta) \oplus \mathcal{O}_{\ell_1}(\beta-1)$, with $\beta \geq 2$.\\
Applying once again an elementary transformation, we get
$$
0 \rightarrow K_1(-1) \rightarrow G_\ell \rightarrow \mathcal{O}_{\ell_1}(-\beta) \rightarrow 0.
$$
If $h^0(K_1) = 0$, then $h^0(G_\ell(1)) = 0$ as well and $h^0(E(1))\leq h^0(F(1))\leq 1$.\\
Analogously, if $h^0(K_1) = 1$, then $h^0(G_\ell(1)) = 1$ as well and $h^0(E(1))\leq h^0(F(1))\leq 2$.\\
To conclude, let us show that $h^0(K_1) > 1$ is impossible. Indeed, by \cite[Lemma 2']{Barth}, in this case $K_1 \simeq \mathcal{O}_{\mathbb{P}^2} ^{\oplus 2}$ and hence $h^0(G_\ell(1))=2$.\\
Consider a line $\ell_2$, different from $\ell_1$, that gives the splitting  type $(G_\ell)_{|\ell_2} \simeq \mathcal{O}_{\ell_2}(-\gamma) \oplus \mathcal{O}_{\ell_2}(\gamma-1)$, with $\gamma \geq 1$. Considering the elementary transformation
$$
0 \rightarrow K_2(-1) \rightarrow G_\ell \rightarrow \mathcal{O}_{\ell_2}(-\gamma) \rightarrow 0
$$
and, having $h^0(G_\ell(1))=2$, we get that $K_2 \simeq \mathcal{O}_{\mathbb{P}^2} ^{\oplus 2}$ as well. 
Having $\Hom(\mathcal{O}_{\mathbb{P}^2}(-1),\mathcal{O}_{\ell_1}(-\beta)) = 0$, we get the following commutative diagram
$$
\xymatrix{
0 \ar[r] & \mathcal{O}_{\mathbb{P}^2}(-1)^{\oplus 2} \ar[r] \ar[d]_{=} & G_\ell \ar[r] \ar[d]_{=} & \mathcal{O}_{\ell_2}(-\gamma) \ar[r] \ar@{-->}[d] & 0\\
0 \ar[r] & \mathcal{O}_{\mathbb{P}^2}(-1)^{\oplus 2} \ar[r]& G_\ell \ar[r]  & \mathcal{O}_{\ell_1}(-\beta) \ar[r] & 0
}
$$
The third vertical map is an isomorphism as well, leading to a contradiction.

\item[Case 2] the bundle $G_\ell$, obtained for any choice of a jumping line $\ell$, is uniform. Indeed, if there is a line $\ell$ such that $G_\ell$ is not uniform, we apply again the previous case.\\
In this case $G_\ell$ is homogeneous and, by stability, we have $G_\ell \simeq \Omega_{\mathbb{P}^2}(1)$. Recalling that, being $c_1(F)=0$, we have a divisor of jumping lines, and, by assumption, every elementary transformation constructed considering any of these lines leads to a twist of the cotangent bundle.\\

If $\alpha=2$, we consider as before a jumping line $\tilde\ell$, different from $\ell$, such that $G_{\tilde{\ell}} \simeq \Omega_{\mathbb{P}^2}(1)$ as well. 
Having $\Hom(\Omega_{\mathbb{P}^2}(1),\mathcal{O}_{\ell}(-\alpha)) = 0$, we have again a commutative diagram
$$
\xymatrix{
0 \ar[r] & \Omega_{\mathbb{P}^2}(1) \ar[r] \ar[d]_{=} & F \ar[r] \ar[d]_{=} & \mathcal{O}_{\tilde\ell}(-\tilde\alpha) \ar[r] \ar@{-->}[d] & 0\\
0 \ar[r] & \Omega_{\mathbb{P}^2}(1) \ar[r]& F \ar[r]  & \mathcal{O}_{\ell}(-\alpha) \ar[r] & 0
}
$$
which leads to a contradiction.\\

If $\alpha=1$, necessarily  $F$ is a Steiner bundle defined by a short exact sequence
$$
0 \rightarrow \mathcal{O}_{\mathbb{P}^2}(-2)^{\oplus 2} \rightarrow \mathcal{O}_{\mathbb{P}^2}(-1)^{\oplus 4} \rightarrow F \rightarrow 0.
$$
 Indeed, in this case, we have that $F$ fits in the short exact sequence 
$$
0 \rightarrow \Omega_{\mathbb{P}^2}(1) \rightarrow F \rightarrow \mathcal{O}_{\ell}(-1) \rightarrow 0
$$
and, applying the Horseshoe Lemma (see \cite[Lemma 2.2.8]{weibel}), it is possible to find such resolution of $F$ given the ones of $\Omega_{\mathbb{P}^2}(1)$ and $\mathcal{O}_{\ell}(-1)$.

Consider the following exact sequence in cohomology
\begin{equation}\label{eq-restriction}
0 \rightarrow H^0(E(1)) \rightarrow H^0(F(1)) \rightarrow H^1(E) \rightarrow H^1(E(1)) \rightarrow 0 .
\end{equation}
Using Riemann--Roch Theorem and denoting $c=c_2(E)$, we compute $\chi(E)=2-2c$ and $\chi(E(1))=8-3c$. Diagram (\ref{eq-restriction}) implies that $c=2$. From \cite[Lemma 9.3]{H1}, we have that $h^0(E(1))=2$.
\end{description}
Having found that $h^0(E(1))\leq 2$ in all the possible cases, the result is proved.
\end{proof}

\begin{Prop}\label{c_2=1}
For each instanton bundle $E$ of charge 1 there is a foliation by curves $\sF$ of degree 3 such that $N_\sF^*(3)=E$. Furthermore, $\sing(\sF)$ is a curve of degree 8 and arithmetic genus 5 that is connected whenever it is reduced, and $\dim_\C M_\sF=1$.
\end{Prop}

The second statement in the previous proposition follows from the formulas of Theorem \ref{P:SLocus} and in display \eqref{h0(OZ)}, as well as the considerations in the paragraph just above it. 

\begin{proof}
Instanton bundles of charge 1 are 1-regular, so $E(1)$ is globally generated. It follows that $E\otimes\Omega^1_{\p3}(3)$ is globally generated as well; Ottaviani's Bertini type Theorem \cite[Teorema 2.8]{O} implies that there is a monomorphism $E(-3)\to \Omega^1_{\p3}$ whose cokernel is a torsion free sheaf (see also the Appendix in \cite{CCJ}).
\end{proof}

In the remaining part of this section, we will produce  explicit examples  that are obtained by applying the recent techniques introduced by Muniz in \cite{Muniz}, using Macaulay2
 \cite{M2-hyper}. These examples will ensure the existence of the required foliations and therefore close the proof of Main Theorem \ref{classification-degree3}.

\begin{Ex}\label{examample-d7-g2}\rm 
Let us start by considering the following seven lines: 
$$
L_1 = V(x_3,x_1), \: \: L_2 = V(x_3,x_1-x_2), \: \:L_3 = V(x_2,x_0), \: \:L_4 = V(x_2,x_0-x_3)
$$
$$
L_5 = V(x_1,x_0), \:\: L_6 = V(x_1-x_3,x_0-x_2), \:\: L_7 = V(x_1+x_3,x_0+x_2).
$$ 
Then $C = L_1 \cup \dots \cup L_7$ is a degree $7$ curve and one may check that its genus is two. Now let $v = a_0\frac{\partial }{\partial x_0} + \dots + a_3\frac{\partial }{\partial x_3}$ be the vector field given by 

\begin{align*}
   a_0 &= -3\,x_{0}^{2}x_{1}-3\,x_{0}x_{1}x_{2}-6\,x_{1}^{2}x_{2}+x_{0}x_{2}^{2}+6\,x_{1}x_{2}^{2}\\ & \qquad+3\,x_{0}^{2}x_{3}+3\,x_{0}x_{1}x_{3}+3\,x_{0}x_{2}x_{3}+6\,x_{1}x_{2}x_{3}-5\,x_{0}x_{3}^{2} \\
   a_1 & = 3\,x_{0}x_{1}^{2}-6\,x_{0}x_{1}x_{2}+3\,x_{1}^{2}x_{2}-5\,x_{1}x_{2}^{2}\\ & \qquad+6\,x_{0}^{2}x_{3}-3\,x_{0}x_{1}x_{3}-3\,x_{1}^{2}x_{3}+6\,x_{0}x_{2}x_{3}+9\,x_{1}x_{2}x_{3}-6\,x_{0}x_{3}^{2}+x_{1}x_{3}^{2} \\
   a_2 &= -3\,x_{0}x_{1}x_{2}-3\,x_{1}x_{2}^{2}+x_{2}^{3}+12\,x_{0}^{2}x_{3}+3\,x_{0}x_{2}x_{3}\\ & \qquad+9\,x_{1}x_{2}x_{3}-3\,x_{2}^{2}x_{3}-12\,x_{0}x_{3}^{2}+x_{2}x_{3}^{2} \\
   a_3 &= 6\,x_{1}^{2}x_{2}-6\,x_{1}x_{2}^{2}+3\,x_{0}x_{1}x_{3}+6\,x_{0}x_{2}x_{3}+3\,x_{1}x_{2}x_{3} \\ & \qquad +x_{2}^{2}x_{3}-3\,x_{0}x_{3}^{2}-3\,x_{1}x_{3}^{2}-3\,x_{2}x_{3}^{2}+x_{3}^{3} \\
\end{align*}
We claim that $v$ is singular precisely along $C$, and one may check this with Macaulay2 \cite{M2-hyper}. In the script below we define $C$ as above and $Z$ as the singular scheme of $v$ and then compare them.

\begin{center} 
\begin{BVerbatim}
R = QQ[x_0..x_3];
C = intersect(ideal(x_3,x_1), ideal(x_3,x_1-x_2), ideal(x_2,x_0), 
ideal(x_2,x_0-x_3), ideal(x_1,x_0), ideal(x_1-x_3,x_0-x_2), 
ideal(x_1+x_3,x_0+x_2));
a0 = -3*x_0^2*x_1-3*x_0*x_1*x_2-6*x_1^2*x_2+x_0*x_2^2+6*x_1*x_2^2
+3*x_0^2*x_3+3*x_0*x_1*x_3 +3*x_0*x_2*x_3+6*x_1*x_2*x_3-5*x_0*x_3^2;
a1 = 3*x_0*x_1^2-6*x_0*x_1*x_2+3*x_1^2*x_2-5*x_1*x_2^2+6*x_0^2*x_3
-3*x_0*x_1*x_3-3*x_1^2*x_3+6*x_0*x_2*x_3+9*x_1*x_2*x_3-6*x_0*x_3^2
+x_1*x_3^2;
a2 = -3*x_0*x_1*x_2-3*x_1*x_2^2+x_2^3+12*x_0^2*x_3+3*x_0*x_2*x_3
+9*x_1*x_2*x_3-3*x_2^2*x_3-12*x_0*x_3^2+x_2*x_3^2;
a3 = 6*x_1^2*x_2-6*x_1*x_2^2+3*x_0*x_1*x_3+6*x_0*x_2*x_3
+3*x_1*x_2*x_3+x_2^2*x_3-3*x_0*x_3^2-3*x_1*x_3^2-3*x_2*x_3^2+x_3^3;
Z = saturate minors(2, matrix{{x_0,x_1,x_2,x_3},{a0,a1,a2,a3}})
C == Z 
\end{BVerbatim}
\end{center}

\end{Ex}



\begin{Ex} \label{examample-2cubics}\rm 
Now consider the curve $C = C_1 \sqcup C_2$ the disjoint union of two twisted cubics given by the maximal minors of the matrices
\[
 \begin{pmatrix}
x_0 & x_1 & x_2 \\ x_1 & x_2 & x_3
\end{pmatrix} 
\quad \text{and} \quad 
\begin{pmatrix}
x_2 & x_3 & x_0 \\ x_3 & x_0 & -x_1
\end{pmatrix}.
\]
For this choice of $C$ we can produce the vector field $v = a_0\frac{\partial }{\partial x_0} + \dots + a_3\frac{\partial }{\partial x_3}$ such that
\begin{align*}
    a_0 & = -x_{0}^{3}-9\,x_{0}^{2}x_{1}+x_{0}x_{1}^{2}-6\,x_{0}^{2}x_{2}+6\,x_{1}^{2}x_{2}-x_{0}x_{2}^{2}+12\,x_{1}x_{2}^{2}-18\,x_{1}^{2}x_{3}\\ & \qquad+9\,x_{0}x_{2}x_{3}-6\,x_{1}x_{2}x_{3}+5\,x_{0}x_{3}^{2} \\
    a_1 & = -x_{0}^{2}x_{1}+9\,x_{0}x_{1}^{2}+x_{1}^{3}-18\,x_{0}^{2}x_{2}+5\,x_{1}x_{2}^{2}-6\,x_{0}^{2}x_{3}-6\,x_{1}^{2}x_{3}+6\,x_{0}x_{2}x_{3}\\ & \qquad-9\,x_{1}x_{2}x_{3}+12\,x_{0}x_{3}^{2}-x_{1}x_{3}^{2} \\
    a_2 & = -x_{0}^{2}x_{2}-3\,x_{0}x_{1}x_{2}-5\,x_{1}^{2}x_{2}+6\,x_{0}x_{2}^{2}-x_{2}^{3}-6\,x_{0}^{2}x_{3}-6\,x_{0}x_{1}x_{3}-3\,x_{2}^{2}x_{3}\\ & \qquad+6\,x_{0}x_{3}^{2}+6\,x_{1}x_{3}^{2}-x_{2}x_{3}^{2} \\
    a_3 & =  -6\,x_{0}x_{1}x_{2}-6\,x_{1}^{2}x_{2}-6\,x_{0}x_{2}^{2}-6\,x_{1}x_{2}^{2}+5\,x_{0}^{2}x_{3}+3\,x_{0}x_{1}x_{3}+x_{1}^{2}x_{3}-x_{2}^{2}x_{3}\\ & \qquad+6\,x_{1}x_{3}^{2}+3\,x_{2}x_{3}^{2}-x_{3}^{3}\\
\end{align*}

To see that $C$ is the singular scheme of $v$ one may use the following script.

\begin{center} 
\begin{BVerbatim}
R = QQ[x_0..x_3];
C1 = minors(2, matrix{{x_0,x_1,x_2},{x_1,x_2,x_3}});
C2 = minors(2, matrix{{x_2,x_3,x_0},{x_3,x_0,-x_1}});
saturate(C1+C2)  -- check if they are disjoint
C = intersect(C1,C2);
a0 = -x_0^3-9*x_0^2*x_1+x_0*x_1^2-6*x_0^2*x_2+6*x_1^2*x_2-x_0*x_2^2
+12*x_1*x_2^2-18*x_1^2*x_3+9*x_0*x_2*x_3-6*x_1*x_2*x_3+5*x_0*x_3^2;
a1 = -x_0^2*x_1+9*x_0*x_1^2+x_1^3-18*x_0^2*x_2+5*x_1*x_2^2-6*x_0^2*x_3
-6*x_1^2*x_3+6*x_0*x_2*x_3-9*x_1*x_2*x_3+12*x_0*x_3^2-x_1*x_3^2;
a2 = -x_0^2*x_2-3*x_0*x_1*x_2-5*x_1^2*x_2+6*x_0*x_2^2-x_2^3-6*x_0^2*x_3
-6*x_0*x_1*x_3-3*x_2^2*x_3+6*x_0*x_3^2+6*x_1*x_3^2-x_2*x_3^2;
a3 = -6*x_0*x_1*x_2-6*x_1^2*x_2-6*x_0*x_2^2-6*x_1*x_2^2+5*x_0^2*x_3
+3*x_0*x_1*x_3+x_1^2*x_3-x_2^2*x_3+6*x_1*x_3^2+3*x_2*x_3^2-x_3^3;
Z = saturate minors(2, matrix{{x_0,x_1,x_2,x_3},{a0,a1,a2,a3}});
C == Z
\end{BVerbatim}
\end{center}
 \end{Ex} 
 

\begin{Ex}\label{examample-quartic-lines}\rm 
 Consider the quartic elliptic curve   $$C_1=V(x_0x_3-x_1x_2, x_2x_3-x_0x_1)=V(x_{3},\,x_{1})\cup V\left(x_{2},\,x_{0}\right)\cup V\left(x_{1}-x_{3},\,x_{0}-x_{2}\right) \cup  V\left(x_{1}+x_{3},\,x_{0}+x_{2}\right)  $$
and the skew lines 
$$
L_1=V\left(x_{2}+x_{3},\,x_{0}-x_{1}+x_{3}\right), \: L_2=V\left(x_{1}+x_{2}+x_{3},\,x_{0}-x_{3}\right). 
 $$
The curve $C=C_1\sqcup L_1 \sqcup L_2$ has $3$ connected components, i.e., $h^0(\mathcal{O}_C)=3$, degree $6$ and arithmetic genus $-1$.
 We can produce the vector field $v = a_0\frac{\partial }{\partial x_0} + \dots + a_3\frac{\partial }{\partial x_3}$ such that
\\
\\
\begin{align*}
a_0= &  -6\,x_{0}^{3}-13\,x_{0}^{2}x_{1}+49\,x_{0}x_{1}^{2}-13\,x_{0}^{2}x_{2}+38\,x_{0}x_{1}x_{2}+42\,x_{1}^{2}x_{2}-11\,x_{0}x_{2}^{2}+42\,x_{1}x_{2}^{2}\\ 
& \qquad-5\,x_{0}^{2}x_{3}-11\,x_{0}x_{1}x_{3}-59\,x_{0}x_{2}x_{3}-24\,x_{1}x_{2}x_{3}-48\,x_{2}^{2}x_{3}-32\,x_{0}x_{3}^{2}-24\,x_{2}x_{3}^{2},
\\
a_1= & 
6\,x_{0}^{2}x_{1}+35\,x_{0}x_{1}^{2}-11\,x_{1}^{3}+47\,x_{0}x_{1}x_{2}+20\,x_{1}^{2}x_{2}+49\,x_{1}x_{2}^{2}+24\,x_{0}^{2}x_{3}+25\,x_{0}x_{1}x_{3}\\
& \qquad +7\,x_{1}^{2}x_{3}-72\,x_{0}x_{2}x_{3}+7\,x_{1}x_{2}x_{3}-84\,x_{2}^{2}x_{3}-72\,x_{0}x_{3}^{2}+4\,x_{1}x_{3}^{2}-60\,x_{2}x_{3}^{2},
\\
a_2= & 12\,x_{0}^{2}x_{1}-12\,x_{0}x_{1}^{2}-6\,x_{0}^{2}x_{2}-19\,x_{0}x_{1}x_{2}-5\,x_{1}^{2}x_{2}-13\,x_{0}x_{2}^{2}-34\,x_{1}x_{2}^{2}-11\,x_{2}^{3}-30\,x_{0}^{2}x_{3}\\
& \qquad -18\,x_{0}x_{1}x_{3}+7\,x_{0}x_{2}x_{3}-17\,x_{1}x_{2}x_{3}+19\,x_{2}^{2}x_{3}+36\,x_{0}x_{3}^{2}+16\,x_{2}x_{3}^{2},
\\
a_3= & -6\,x_{0}^{2}x_{1}+6\,x_{0}x_{1}^{2}-30\,x_{0}x_{1}x_{2}-12\,x_{1}^{2}x_{2}-12\,x_{1}x_{2}^{2}+18\,x_{0}^{2}x_{3}-25\,x_{0}x_{1}x_{3}-11\,x_{1}^{2}x_{3}\\
& \qquad +5\,x_{0}x_{2}x_{3}-10\,x_{1}x_{2}x_{3}-5\,x_{2}^{2}x_{3}-11\,x_{0}x_{3}^{2}+7\,x_{1}x_{3}^{2}+7\,x_{2}x_{3}^{2}+4\,x_{3}^{3}
\end{align*}
 To see that $C$ is the singular scheme of $v$ one may use the following script.

\begin{center} 
\begin{BVerbatim}
R = QQ[x_0..x_3];
C1 = ideal(-x_1x_2+x_0x_3,-x_0x_1+x_2x_3);
L1 = ideal(x_2+x_3,x_0-x_1+x_3);
L2 = ideal(x_1+x_2+x_3,x_0-x_3);
saturate(C1+L1+L2)  -- check if they are disjoint
C = intersect(C1,L1,L2);
a0 = -6*x_0^3-13*x_0^2*x_1+49*x_0*x_1^2-13*x_0^2*x_2+38*x_0*x_1*x_2+
42*x_1^2*x_2-11*x_0*x_2^2+42*x_1*x_2^2-5*x_0^2*x_3-11*x_0*x_1*x_3-
59*x_0*x_2*x_3-24*x_1*x_2*x_3-48*x_2^2*x_3-32*x_0*x_3^2-24*x_2*x_3^2;
a1 = 6*x_0^2*x_1+35*x_0*x_1^2-11*x_1^3+47*x_0*x_1*x_2+20*x_1^2*x_2+
49*x_1*x_2^2+24*x_0^2*x_3+25*x_0*x_1*x_3+7*x_1^2*x_3-72*x_0*x_2*x_3+
7*x_1*x_2*x_3-84*x_2^2*x_3-72*x_0*x_3^2+4*x_1*x_3^2-60*x_2*x_3^2; 
a2 = 12*x_0^2*x_1-12*x_0*x_1^2-6*x_0^2*x_2-19*x_0*x_1*x_2-5*x_1^2*x_2-
13*x_0*x_2^2-34*x_1*x_2^2-11*x_2^3-30*x_0^2*x_3-18*x_0*x_1*x_3+
7*x_0*x_2*x_3-17*x_1*x_2*x_3+19*x_2^2*x_3+36*x_0*x_3^2+16*x_2*x_3^2; 
a3 = -6*x_0^2*x_1+6*x_0*x_1^2-30*x_0*x_1*x_2-12*x_1^2*x_2-12*x_1*x_2^2+
18*x_0^2*x_3-25*x_0*x_1*x_3-11*x_1^2*x_3+5*x_0*x_2*x_3-10*x_1*x_2*x_3-
5*x_2^2*x_3-11*x_0*x_3^2+7*x_1*x_3^2+7*x_2*x_3^2+4*x_3^3;  
Z = saturate minors(2, matrix{{x_0,x_1,x_2,x_3},{a0,a1,a2,a3}});
C == Z
\end{BVerbatim}
\end{center}

\end{Ex}



\begin{Ex} \label{examample-2double-lines}\rm 
Let $C = C_1 \sqcup C_2$ where $C_1$ and $C_2$ are double lines of genus $-3$ given by
\begin{align*}
    I_{C_1} & = \left(x_{0}^{2},\,x_{0}x_{1},\,x_{1}^{2},\,x_{0}x_{2}^{3}-x_{1}x_{3}^{3}\right),\\
    I_{C_2} & = \left(x_{2}^{2},\,x_{2}x_{3},\,x_{3}^{2},\,x_{0}^{3}x_{2}-x_{1}^{3}x_{3}\right).
\end{align*}
Then we can define the vetor field $v = a_0\frac{\partial }{\partial x_0} + \dots + a_3\frac{\partial }{\partial x_3}$ such that 
\begin{align*}
    a_0 & = -x_{0}^{2}x_{2}-x_{0}x_{1}x_{2}-x_{0}^{2}x_{3}+x_{0}x_{1}x_{3}-2\,x_{1}^{2}x_{3}  \\
    a_1 & = -2\,x_{0}^{2}x_{2}-x_{0}x_{1}x_{2}-x_{1}^{2}x_{2}-x_{0}x_{1}x_{3}+x_{1}^{2}x_{3}  \\
    a_2 & =x_{0}x_{2}^{2}+x_{1}x_{2}^{2}+x_{0}x_{2}x_{3}-x_{1}x_{2}x_{3}-2\,x_{1}x_{3}^{2} \\
    a_3 & =  -2\,x_{0}x_{2}^{2}+x_{0}x_{2}x_{3}+x_{1}x_{2}x_{3}+x_{0}x_{3}^{2}-x_{1}x_{3}^{2}\\
\end{align*}

To see that $C$ is the singular scheme of $v$ one may use the following script.

\begin{center} 
\begin{BVerbatim}
R = QQ[x_0..x_3];
C1 = ideal(x_0^2, x_0*x_1, x_1^2, x_0*x_2^3 - x_1*x_3^3);
C2 = ideal(x_2^2, x_2*x_3, x_3^2, x_2*x_0^3 - x_3*x_1^3); 
saturate(C1+C2) --check that they are disjoint
C = intersect(C1,C2);
a0 = -x_0^2*x_2-x_0*x_1*x_2-x_0^2*x_3+x_0*x_1*x_3
-2*x_1^2*x_3;
a1 = -2*x_0^2*x_2-x_0*x_1*x_2-x_1^2*x_2-x_0*x_1*x_3
+x_1^2*x_3;
a2 = x_0*x_2^2+x_1*x_2^2+x_0*x_2*x_3-x_1*x_2*x_3
-2*x_1*x_3^2;
a3 = -2*x_0*x_2^2+x_0*x_2*x_3+x_1*x_2*x_3+x_0*x_3^2
-x_1*x_3^2;
Z = minors(2, matrix{{x_0,x_1,x_2,x_3},{a0,a1,a2,a3}});
C == saturate Z
\end{BVerbatim}
\end{center}
\end{Ex} 

The previous examples give us the following result.

\begin{Prop}
${}$
\begin{enumerate}
    \item \label{c_2=2}
 There exists a foliation by curves $\sF$ of degree 3 such that $N_\sF^*(3)$ is an instanton bundle of charge 2.  Furthermore, $\sing(\sF)$  consists of the union  of   7 lines with  arithmetic genus  2, and $\dim_\C M_\sF=4$. 
 \item \label{c_2=3}There exists a foliation by curves $\sF$ of degree 3 such that $N_\sF^*(3)=E$ is an instanton bundle of charge 3 such that $h^0(E(1))=0$.  Furthermore, $\sing(\sF)$  consists of the disjoint union of two twisted cubics and $\dim_\C M_\sF=8$.
 \item There exists a foliation by curves $\sF$ of degree 3 such that $N_\sF^*(3)=E$ is an instanton bundle of charge 3 such that $h^0(E(1))=1$.  Furthermore, $\sing(\sF)$  consists of the  disjoint  union  of a quartic elliptic curve and two skew  lines, and $\dim_\C M_\sF=9$. 
 \item There exists a foliation by curves $\sF$ of degree 3 such that $N_\sF^*(3)$ is an instanton bundle of charge 4 with natural cohomology. Furthermore, $\sing(\sF)$ consists of  5 disjoint lines and $\dim_\C M_\sF=14$.
 \item \label{c_2=5} There exists a foliation by curves $\sF$ of degree 3 such that $N_\sF^*(3)$ is an instanton bundle of charge 5.  Furthermore, $\sing(\sF)$  consists of the union   2  double  lines with  arithmetic genus  $-3$. 
 \end{enumerate}
\end{Prop}

\begin{proof}
Items (1), (2), (3), and (5) are given by 
examples \ref{examample-d7-g2}, \ref{examample-2cubics},\ref{examample-quartic-lines} and \ref{examample-2double-lines}.  On the other hand, regarding item (4), Lemma \ref{lemma-disjoint} already guarantees the existence of a foliation by curves of degree 3 satisfying the conditions claimed.
\end{proof}

\begin{Obs}\rm 
It follows from our results that a foliation by curves $\sF$ whose singular locus has pure dimension 1 has   its conormal bundle $N_\sF^*$  either split or stable. If $N_\sF^*$ is stable, then by Kobayashi--Hitchin correspondence $N_\sF^*$ admits a K\"ahler--Einstein metric. Thus, it would be interesting to know if such a metric is invariant under holonomy, that is, if the foliation is transversely  K\"ahler--Einstein, see \cite[Chapter 5]{Tondeur}. 
\end{Obs}


\section{Legendrian foliations}\label{sec:legendrian}

A curve  $C\subset \P^3$ is called \emph{Legendrian} if it is tangent to a contact structure given by 
$$\sD: 0\to T_\sD \to T\P^3\to \O(2) \to 0 $$
We recall that $T_\sD=N(1)$, where $N$ is a null-correlation bundle. 

\begin{Def}
A foliation by curves $\sF$ on $\P^3$ is called by Legendrian if $T_{\sF}\subset T_\sD $. That is, the leaves of $\sF$ are Legendrian curves outside the singular set of $\sF$.  
\end{Def}
See \cite{Bry} for details about Legendrian curves on $\P^3$. 

\begin{Theorem}\label{thm-leg}
Every Legendrian foliation $\sF$ by curves of degree $d$ is of the form $\omega_0\wedge\omega$, where $\omega_0$ is a  contact form and $\omega\in H^0(\Omega^1_{\p3}(d+1))$. In addition, the moduli space of the Legendrian foliations of degree $d$ is an irreducible quasi-projective variety of dimension
$$d\cdot\binom{d+3}{2} - \binom{d+2}{3}+4\:\:\mbox{ if } \:\: d \geq 2$$
and of dimension 8 if $d=1$.
\end{Theorem}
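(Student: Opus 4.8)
The plan is to establish the two assertions in turn: the structural description $\sF=\omega_0\wedge\omega$, and then the dimension and irreducibility of the moduli space. For the structural part I would start from the definition $T_\sF\subset T_\sD$. The foliation is given by a field $\phi\in H^0(T\p3\otimes\mathscr{L}^*)$ which is everywhere tangent to $\sD$, so contracting the contact form $\omega_0\in H^0(\Omega^1_{\p3}(2))$ (the defining section of the surjection $T\p3\to\O(2)$) gives $\phi^\vee(\omega_0)=\omega_0(\phi)=0$. Comparing with the sequence \eqref{pfaff2}, this says exactly that $\omega_0$ factors through the conormal sheaf, i.e. $\omega_0\in H^0(N_\sF^*(2))$, equivalently there is an inclusion $\O(-2)\hookrightarrow N_\sF^*$ realizing the contact form. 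Let $\eta\in H^0(\Omega^2_{\p3}(d+3))$ be the twisted $2$-form defining $\sF$, obtained from $\wedge^2N_\sF^*=\O(-3-d)\hookrightarrow\Omega^2_{\p3}$ via $\Omega^2_{\p3}\cong T\p3(-4)$, so that $\eta$ corresponds to $\phi$. Since $N_\sF^*$ has rank $2$, $\eta$ is locally decomposable, and as $\omega_0$ is a local section of $N_\sF^*$ it lies in the plane spanned by the two local generators of $\eta$; hence $\omega_0\wedge\eta=0$ on the dense open set where $N_\sF^*$ is locally free. As $\omega_0\wedge\eta$ is a section of the line bundle $\Omega^3_{\p3}(d+5)\cong\O(d+1)$, vanishing on a dense open set forces $\omega_0\wedge\eta=0$ identically.

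The key point is then a global division statement. Because $\sD$ is a nonsingular contact structure, $\omega_0$ is a nowhere-vanishing section of $\Omega^1_{\p3}(2)$, so the Koszul complex of $\omega_0$,
\[ \O\xrightarrow{\,\omega_0\,}\Omega^1_{\p3}(2)\xrightarrow{\,\wedge\omega_0\,}\Omega^2_{\p3}(4)\xrightarrow{\,\wedge\omega_0\,}\Omega^3_{\p3}(6)\to 0, \]
is exact. Twisting by $\O(d-1)$ and using $\omega_0\wedge\eta=0$, the kernel of $\wedge\omega_0\colon\Omega^2_{\p3}(d+3)\to\O(d+1)$ equals the image $B$ of $\wedge\omega_0\colon\Omega^1_{\p3}(d+1)\to\Omega^2_{\p3}(d+3)$; since $B$ fits in $0\to\O(d-1)\to\Omega^1_{\p3}(d+1)\to B\to 0$ and $H^1(\O(d-1))=0$, the lift is global and produces $\omega\in H^0(\Omega^1_{\p3}(d+1))$ with $\eta=\omega_0\wedge\omega$, as claimed. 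Moreover, $\omega_0$ being nowhere zero makes $\O(-2)\hookrightarrow N_\sF^*$ a subbundle, so $N_\sF^*$ is a genuine complete intersection of the two codimension one distributions defined by $\omega_0$ and $\omega$, giving $N_\sF^*\cong\O(-2)\oplus\O(-d-1)$ (the extension splits because $\ext^1(\O(-d-1),\O(-2))=H^1(\O(d-1))=0$).

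For the moduli space I would work with the explicit parametrization
\[ \Phi\colon U\times H^0(\Omega^1_{\p3}(d+1))\longrightarrow H^0(\Omega^2_{\p3}(d+3)),\qquad (\omega_0,\omega)\mapsto\omega_0\wedge\omega, \]
where $U\subset H^0(\Omega^1_{\p3}(2))$ is the irreducible open locus of contact forms, and identify the moduli space with the projectivization of the image inside the quot-scheme of Section \ref{sec:moduli}; it is then quasi-projective and, being the image of an irreducible variety, irreducible. The dimension reduces to analyzing the generic fiber of $\Phi$. The splitting $N_\sF^*\cong\O(-2)\oplus\O(-d-1)$ gives $h^0(N_\sF^*(2))=1$ for $d\ge2$, so the contact factor $\omega_0$ is unique up to scalar; a fiber over $\eta=\omega_0\wedge\omega$ then consists of the pairs $(\lambda\omega_0,\lambda^{-1}(\omega+\omega_0 f))$ with $\lambda\in\C^*$ and $f\in H^0(\O(d-1))$, of dimension $1+\binom{d+2}{3}$. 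Using $h^0(\Omega^1_{\p3}(2))=6$ and $h^0(\Omega^1_{\p3}(d+1))=4\binom{d+3}{3}-\binom{d+4}{3}=d\binom{d+3}{2}$, the image has dimension $5+d\binom{d+3}{2}-\binom{d+2}{3}$, and after projectivizing one obtains exactly $d\binom{d+3}{2}-\binom{d+2}{3}+4$.

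Finally, the case $d=1$ is genuinely different and accounts for the separate dimension $8$: here $N_\sF^*\cong\O(-2)^{\oplus2}$ by Main Theorem \ref{classification-degree12}, so $h^0(N_\sF^*(2))=2$ and a fixed degree-$1$ legendrian foliation lies in a whole $\p1$ of contact structures (cf. Example \ref{Ex-deg1}); the generic fiber of $\Phi$ thus gains one extra dimension, lowering the moduli dimension by one from the value $9$ predicted by the general formula to $8$. I expect the main obstacle to be precisely this fiber analysis, namely proving the rigidity of the contact factor for $d\ge2$ (equivalently $h^0(N_\sF^*(2))=1$) together with the global complete intersection splitting of $N_\sF^*$, since this is what pins down the fiber dimension and isolates the exceptional behaviour at $d=1$; the global division lemma of the second paragraph, while essential, is comparatively routine once the nonsingularity of $\sD$ is used.
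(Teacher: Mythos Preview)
Your argument is correct, and it takes a genuinely different route from the paper's on both halves of the theorem.

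For the structural claim, the paper argues purely on the sheaf level: from the inclusion $T_\sF\hookrightarrow T_\sD=N(1)$ it builds a $3\times3$ diagram whose central column is the foliation and whose bottom row is $0\to\IC(d+1)\to G\to\O(2)\to0$; dualizing gives $0\to\O(-2)\to G^*\to\O(-d-1)\to 0$, which splits because $H^1(\O(d-1))=0$, and so $N_\sF^*=G^*\cong\O(-2)\oplus\O(-d-1)$, from which the wedge decomposition is immediate. Your route instead works directly with forms: you observe $\omega_0\wedge\eta=0$ and use the exact Koszul complex of the nowhere-vanishing section $\omega_0\in H^0(\Omega^1_{\p3}(2))$ to divide globally, getting $\eta=\omega_0\wedge\omega$ first and the conormal splitting only as a corollary. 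Both are valid; the paper's version is cleaner if one wants the splitting of $N_\sF^*$ as the primary output, while yours is more elementary in that it does not pass through dualization of non-locally-free sheaves.

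For the moduli dimension the difference is sharper. The paper invokes Theorem~\ref{quot-thm}: it checks $\ext^1(N_\sF^*,\IZ(d-1))=h^1(\IZ(d+1))+h^1(\IZ(2d))=0$ to get smoothness of the quot-scheme at each Legendrian point, and then reads off the dimension as $\dim\Hom(N_\sF^*,\IZ(d-1))=h^0(\IZ(d+1))+h^0(\IZ(2d))$, evaluating these via the foliation sequence. Your approach bypasses the quot-scheme machinery entirely, parametrizing by $(\omega_0,\omega)\mapsto\omega_0\wedge\omega$ and computing the fibre dimension from $h^0(N_\sF^*(2))$ together with the Koszul kernel $\{f\omega_0:f\in H^0(\O(d-1))\}$. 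This is more direct and makes the jump at $d=1$ transparent (the extra fibre dimension comes precisely from $h^0(N_\sF^*(2))=2$). In fact the paper's remark immediately after its proof sketches exactly your fibration picture as an interpretation of the formula, so the two viewpoints are reconciled there. Your identification of the moduli space with an open subset of the projectivized image of $\Phi$ is a little informal on the scheme-theoretic side, but the dimension and irreducibility follow as you say.
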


In particular, it follows from \cite[Theorem 2]{CJV} that the singular scheme of a Legendrian foliation is a Buchsbaum curve which, by Proposition \ref{con}, is connected for $d\ge2$.

\begin{proof}
In fact,  we have an induced section $\sigma: T_{\sF}=\O(1-d) \to  T_\sD=N(1)$ where $N$ is a null-correlation bundle. That is, the Legendrian foliation $\sF$ induces a global section of $\sigma\in H^0(\P^3, N(d))$, which fits into the following commutative diagram
$$
\xymatrix{
& 0 \ar[d] & 0\ar[d]\\
 & \O(1-d) \ar[d]^{\sigma} \ar@{=}[r] & \O(1-d) \ar[d] \\
0 \ar[r]& N(1) \ar[r] \ar[d] & T\P^3 \ar[r] \ar[d] & \O(2) \ar[r] \ar@{=}[d] & 0 \\
0 \ar[r] & \IC(d+1) \ar[d] \ar[r] & G \ar[d] \ar[r] & \O(2) \ar[r] & 0\\
& 0 & 0
}
$$
where  the curve $C$ is the zero locus of $\sigma$, and the central column is the one defining the Legendrian foliation. Considering the dual exact sequence of the bottom row, we have
$$
0 \rightarrow \O(-2) \rightarrow G^* \rightarrow \O(-1-d) \rightarrow 0
$$
which directly implies that $G^* \simeq \O(-2) \oplus \O(-1-d)$. Therefore we can conclude that the Legendrian foliation is of the required form  $\omega_0\wedge\omega$, where $\omega_0$ is the contact form such that $\ker(\omega_0)=T_\sD=N(1)$ and $\omega\in H^0(\Omega^1_{\p3}(d+1))$.

In order to prove the last statements of the result, we want to apply Theorem \ref{quot-thm}. First of all, we need the vanishings required in the mentioned result, which means we must compute
$$
\dim \left(\ext^1(G^*,\mathcal{I}_Z(d-1))\right) = h^1(\mathcal{I}_Z(d+1)) + h^1(\mathcal{I}_Z(2d))  =0,
$$
where $Z$ denotes the singular locus of the Legendrian foliation. Hence, we can apply the theorem and prove the statement, recalling that the dimension of the moduli space is given by
\begin{equation}\label{modLeg}
\dim \left(\Hom(G^*,\mathcal{I}_Z(d-1))\right) =h^0(\mathcal{I}_Z(d+1)) + h^0(\mathcal{I}_Z(2d)) ,
\end{equation}
which is equal to
$$
d\cdot\binom{d+3}{2} - \binom{d+2}{3}+4\:\:\mbox{ if } \:\: d \geq 2$$
and to 8 if $d=1$, as required. 

\end{proof}

\begin{Obs}\rm
As proved in the previous result, every Legendrian foliation can be expressed  as the wedge product of a contact form with a 1-form $\omega\in  H^0(\Omega^1_{\p3}(d+1)))$. This means that the space Legendrian foliations of degree $d$ is an open set in the image of the linear map (given by the wedge product)
$$
 H^0(\Omega^1_{\p3}(2)) \otimes H^0(\Omega^1_{\p3}(d+1)) \to H^0(\Omega^2_{\p3}(d+3)).
 $$
 We have a natural surjective map 
$$ \begin{array}{c}
       \left\{ \mbox{Legendrian foliations}\right\}  \subset \mathbb{P}H^0(\Omega^2_{\p3}(d+3)))\\
       \downarrow \ \ \ \ \ \ \\
      \left\{ \mbox{Contact forms}\right \} \subset \mathbb{P}H^0(\Omega^1_{\p3}(2))) \simeq  \mathbb{P}^4  
\end{array} $$
defined by $\omega_0\wedge \omega \to \omega_0$.
The fiber over a contact  structure $w_0$ is an open set of  $$
\frac{H^0(\Omega^1_{\p3}(d+1)))}{\{h\omega_0;\ f\in H^0(\O(d-1)) \}}.$$
 Observe that 
$$
\dim \frac{H^0(\Omega^1_{\p3}(d+1)))}{\{h\omega_0;\ f\in H^0(\O(d-1) )\}}=
4\binom{d+3}{3}-\binom{d+4}{3}-\binom{d+2}{3}=d\cdot\binom{d+3}{2}
-\binom{d+2}{3},
$$
since $$4\binom{d+3}{3}-\binom{d+4}{3}=d\cdot\binom{d+3}{2}.$$ 
Such a description fits perfectly with the computation of the dimension of the moduli space.
\end{Obs}

\begin{Obs}\rm
For the degree 1 case we have the following geometric description: consider the vector space $V:=H^0(\Omega^1_{\p3}(2))$. Since a degree 1 Legendrian foliation is induced by a polynomial 2-form $\omega_0\wedge w$, where $\omega_0,  w \in V$ are generic 1-forms of degree 1, then the space of Legendrian foliations of degree 1 is a Zariski open subset of the Grassmannian $Gr(2, V)$, whose dimension is $8$.      
\end{Obs}

\begin{Obs}\rm
In \cite{CV} the first named author and I. Vainsencher  gave a different description of the space of Legendrian foliations. The authors provide   formulas for the  degrees of the varieties of Legendrian foliations, and also  of the varieties of foliations tangent to a pencil of planes.
\end{Obs}
 
\medskip

\begin{Obs}\label{Obs-moduli-split}\rm 
Another consequence of the previous result is that the conormal sheaf of a Legendrian foliation of degree $d$ always splits as a sum of line bundles $\op3(-2)\oplus\op3(-1-d)$; in other words, Legendrian foliation are of global complete intersection type.

More generally, one can also consider foliations by curves of global complete intersection type given by exact sequences of the form
\begin{equation}\label{gl inter}
0 \longrightarrow \op3(-2-d_1)\oplus\op3(-2-d_2) \stackrel{\phi}{\longrightarrow} \Omega^1_{\p3} \longrightarrow \IC(d_1+d_2) \longrightarrow 0,
\end{equation}
with $d_1,d_2\ge0$. Such foliations have degree $d_1+d_2+1$ and their singular schemes are, by Proposition \ref{con}, connected curves satisfying
$$ \deg(C) = (d_1+d_2)^2-d_1d_2+2(d_1+d_2+1) \:\:\mbox{ and } $$
$$ p_a(C) = (d_1+d_2+1)^3-2(d_1+d_2+1)^2-(d_1+d_2)(3d_1d_2-2)/2,$$
according to the formulas in displays \eqref{eq:degZ} and \eqref{eq:gen}.
In addition, since $\Omega^1(d_1+2)\oplus\Omega^1(d_2+2)$ is globally generated, Ottaviani's Bertini type Theorem \cite[Teorema 2.8]{O} implies that the singular scheme of a generic foliation of the form described in display \eqref{gl inter} is smooth. The moduli spaces of foliations by curves of the form \eqref{gl inter} can be described similarly to Legendrian foliations, following the arguments in the proof of Theorem \ref{thm-leg} above.
\end{Obs}


\section{The Rao module of singular schemes of foliations by curves}\label{sec:buchsbaum}

As mentioned in the Introduction, the first cohomology module of the conormal sheaf (or equivalently, the Rao module of the singular scheme) is an important piece of algebraic information attached to a foliation by curves. We will now focus on explaining the relation between them, and discuss how they can be useful in the classification of particular classes of foliations by curves.

So let $\sF$ be a foliation by curves on $\p3$, and consider the following two graded modules
$$ R_{\sF}:=H^1_*(\IZ) ~~{\rm and} ~~
M_{\sF}:=H^1_*(N_\sF^*); $$
they will be called the \emph{Rao module} and the \emph{first cohomology module} of the foliation $\sF$. Note that $R_{\sF}$ is finite dimensional (as a $\C$-vector space) if and only if $Z=C$ has pure dimension 1, or equivalently, if and only if $\sF$ is of local complete intersection type. On the other hand, $M_{\sF}$ is always finite dimensional.

\begin{Lemma}\label{l:rao}
If $\sF$ is a foliation by curves on $\p3$ of local complete intersection type, then
\begin{equation}\label{ineq-rao}
\dim_{\C} M_{\sF} \le \dim_{\C} R_{\sF} \le \dim_{\C} M_{\sF} + 1.
\end{equation}
Moreover, if $h^1(N_\sF^*)=0$, then the second inequality is an equality.
\end{Lemma}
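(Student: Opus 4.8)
## Proof strategy for Lemma \ref{l:rao}

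The plan is to extract the relationship between the two modules directly from the defining Pfaff sequence in display \eqref{pfaff2}, twisted by $\op3(p)$ and run through the long exact sequence in cohomology. For a foliation by curves of degree $d$ of local complete intersection type, the relevant sequence is
$$ 0 \longrightarrow N_\sF^*(p) \longrightarrow \Omega^1_{\p3}(p) \stackrel{\phi^\vee}{\longrightarrow} \IZ(d-1+p) \longrightarrow 0, $$
where $Z=C$ has pure dimension $1$. Taking cohomology and recalling that $H^1_*(\Omega^1_{\p3})$ is concentrated in degree $0$ (where it is one-dimensional, by the standard cohomology of the cotangent bundle on $\p3$), I would obtain for each $p$ the exact fragment
$$ H^1(\Omega^1_{\p3}(p)) \longrightarrow H^1(\IZ(d-1+p)) \longrightarrow H^2(N_\sF^*(p)) \longrightarrow H^2(\Omega^1_{\p3}(p)). $$

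First I would use that $H^2_*(\Omega^1_{\p3})=0$, so the rightmost term vanishes for all $p$; this shows that $H^1(\IZ(d-1+p))$ is a quotient of $H^2(N_\sF^*(p))$, with kernel being the image of the one-dimensional group $H^1(\Omega^1_{\p3}(p))$. Summing over all $p$ and reindexing, this immediately yields that $R_\sF=H^1_*(\IZ)$ is a quotient of $H^2_*(N_\sF^*)$ by a subspace of dimension at most $1$ (the contribution of $H^1(\Omega^1_{\p3})$ in its single nonzero degree). The key point is then to identify $\dim_\C H^2_*(N_\sF^*)$ with $\dim_\C M_\sF=\dim_\C H^1_*(N_\sF^*)$: since $N_\sF^*$ is locally free of rank $2$ with $\det N_\sF^*=\op3(-3-d)$ by Theorem \ref{P:SLocus}, Serre duality gives $H^2(N_\sF^*(p))\cong H^1(N_\sF^*(d-1-p))^\vee$, so the two graded modules have the same total dimension. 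Combining these gives precisely the two-sided bound in display \eqref{ineq-rao}, with the possible discrepancy of $1$ accounted for by the image of $H^1(\Omega^1_{\p3})$.

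The main obstacle is the sharp bookkeeping of that one-dimensional correction term: I must verify that the map $H^1(\Omega^1_{\p3}(p))\to H^1(\IZ(d-1+p))$ either vanishes or is injective in the single degree where its source is nonzero, so that the codimension of $R_\sF$ inside the quotient is exactly $0$ or $1$ rather than something larger. For the final assertion, I would observe that if $h^1(N_\sF^*)=0$, then the injection $H^0(\IZ(d-1))\to H^1(N_\sF^*)=0$ forces the connecting map to make the relevant piece of the sequence split off the full one-dimensional $H^1(\Omega^1_{\p3})$ contribution, so that term genuinely drops the dimension by $1$ and the right-hand equality $\dim_\C R_\sF=\dim_\C M_\sF+1$ holds. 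The cleanest way to phrase this is to track the exact sequence in the single critical degree and confirm the relevant map $H^1(\Omega^1_{\p3})\to R_\sF$ is injective, which is where the vanishing hypothesis $h^1(N_\sF^*)=0$ is used.
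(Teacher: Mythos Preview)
Your approach is the same as the paper's, but you have reversed the direction of the surjection in the key step. From the fragment
$$ H^1(\Omega^1_{\p3}(p)) \longrightarrow H^1(\IZ(d-1+p)) \longrightarrow H^2(N_\sF^*(p)) \longrightarrow H^2(\Omega^1_{\p3}(p))=0, $$
the conclusion is that $H^2(N_\sF^*(p))$ is a quotient of $H^1(\IZ(d-1+p))$, not the other way around; the kernel of this surjection is the image of $H^1(\Omega^1_{\p3}(p))$ inside $H^1(\IZ(d-1+p))$. Summing over $p$ therefore gives
$$ \dim_\C R_\sF \;=\; \dim_\C H^2_*(N_\sF^*) \;+\; \dim_\C\!\big(\img\,H^1(\Omega^1_{\p3})\to H^1(\IZ(d-1))\big), $$
so the one-dimensional correction is \emph{added} to $\dim_\C M_\sF$, yielding $\dim_\C M_\sF\le\dim_\C R_\sF\le\dim_\C M_\sF+1$. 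As you wrote it (``$R_\sF$ is a quotient of $H^2_*(N_\sF^*)$ by a subspace of dimension at most $1$''), the literal conclusion would be $\dim_\C R_\sF\le\dim_\C M_\sF$, which is the wrong inequality.

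Your final paragraph is correct and in fact already uses the right orientation: when $h^1(N_\sF^*)=0$, the map $H^1(N_\sF^*)\to H^1(\Omega^1_{\p3})$ is zero, hence $H^1(\Omega^1_{\p3})\to H^1(\IZ(d-1))$ is injective and the correction term equals $1$, giving $\dim_\C R_\sF=\dim_\C M_\sF+1$. Once the direction of the quotient is fixed, your argument and the paper's coincide (the paper writes out the five-term sequence at $p=0$ explicitly, but the content is identical).
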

\begin{proof}
Starting with the exact sequence
$$ 0 \to N_\sF^*(k-d+1) \to \Omega^1(k-d+1) \to \IC(k) \to 0 $$
where $d$ is the degree of $\sF$, it is easy to see that 
$$ h^1(\IC(k)) = h^2
(N_\sF^*(k-d+1)) = h^1(N_\sF^*(2d+2-k)). $$
whenever $k\ne d-1$. When $k=d-1$, we obtain the following exact sequence in cohomology:
$$ 0\to H^0(\IC(d-1)) \to H^1(N_\sF^*) \to H^1(\Omega^1_{\p3}) \to H^1(\IC(d-1)) \to H^2(N_\sF^*) \to 0, $$
thus $h^1(\IC(d-1))-h^1(N_\sF^*(d+3))$ is either 0 or 1, proving the two inequalities in the claim. If $h^1(N_\sF^*)=0$, then $h^1(\IC(d-1))-h^1(N_\sF^*(d+3))=1$ and we get that the second inequality is an equality.
\end{proof}

It is easy to see that the Rao module $R_\sF$ of a foliation of local complete intersection type is always nontrivial. Indeed, if $\dim_{\C} R_{\sF}=0$, then $\dim_{\C} M_{\sF}=0$ as well, implying, by the Horrocks' splitting criterion, that $N_\sF^*$ splits as a sum of line bundles; but then $h^1(N_\sF^*)=0$, hence the second equality in display \eqref{ineq-rao} must be an equality, which yields a contradiction. 

Note also that if $N_\sF^*$ splits as a sum of line bundles, then $\dim_{\C} M_{\sF}=0$ and $h^1(N_\sF^*)=0$, so it follows from Lemma \ref{l:rao} that $\dim_{\C} R_{\sF}=1$. This claim and its converse were already established as a particular case of \cite[Theorem 2]{CJV}; for the sake of completeness, we reproduce the result here.

\begin{Theorem}
Let $\sF$ be a foliation by curves. The conormal sheaf $N_\sF^*$ splits as a sum of line bundles if and only if $\dim_{\C} R_{\sF}=1$.
\end{Theorem}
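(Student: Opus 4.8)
The forward implication is essentially recorded in the remarks preceding the statement, and I would keep it as is: if $N_\sF^*$ splits as a sum of line bundles, then it is locally free, so $\sF$ is of local complete intersection type, $M_\sF=H^1_*(N_\sF^*)=0$, and $h^1(N_\sF^*)=0$; Lemma \ref{l:rao}, including its ``moreover'' clause, then yields $\dim_\C R_\sF=\dim_\C M_\sF+1=1$. For the converse I would first reduce to the locally free case: if $\dim_\C R_\sF=1$ is finite then $Z$ has pure dimension $1$, so $N_\sF^*$ is locally free (Lemma \ref{n-loc free}); and a split conormal sheaf is automatically locally free. Thus both sides of the equivalence force $\sF$ to be of local complete intersection type, and I may assume $N_\sF^*$ is a rank $2$ bundle throughout. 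Since $N_\sF^*$ is self-dual up to the twist $(3+d)$, Serre duality makes $H^2_*(N_\sF^*)$ the graded dual of $M_\sF$; hence the single condition $M_\sF=0$ already triggers the Horrocks splitting criterion. So the entire converse comes down to showing that $\dim_\C R_\sF=1$ forces $M_\sF=0$.

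Assume $\dim_\C R_\sF=1$. The first inequality in Lemma \ref{l:rao} gives $\dim_\C M_\sF\le1$. If $\dim_\C M_\sF=0$ we are done by Horrocks, so the whole game is to exclude $\dim_\C M_\sF=1$. The exclusion I would route through the ``moreover'' clause of Lemma \ref{l:rao}: if I can prove that $h^1(N_\sF^*)=0$ whenever $\dim_\C M_\sF=1$, then that clause forces $\dim_\C R_\sF=\dim_\C M_\sF+1=2$, contradicting $\dim_\C R_\sF=1$. Now $h^1(N_\sF^*)=H^1(N_\sF^*(0))$ is a single graded component of the one-dimensional module $M_\sF$, so the desired vanishing $h^1(N_\sF^*)=0$ is precisely the assertion that the unique nonzero graded piece of $M_\sF$ does \emph{not} sit in degree $0$.

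This degree-$0$ exclusion is the crux. The plan is to invoke the classification of rank $2$ locally free sheaves on $\p3$ with one-dimensional intermediate cohomology: such a sheaf is, up to a twist, the null correlation bundle $N$, for which $H^1_*(N)\cong\C$ sits in degree $-1$. Consequently a twist $N(t)$ carries its one-dimensional $H^1_*$ in degree $-1-t$, while the requirement that $N_\sF^*\cong N(t)$ be a conormal sheaf fixes $c_1(N_\sF^*)=2t=-3-d$ by Theorem \ref{P:SLocus}(i). Hence $d$ must be odd and the module lives in degree $(d+1)/2\ge1$, so $h^1(N_\sF^*)=0$ as required; when $d$ is even no such bundle exists at all, since its first Chern class would be odd, and so $\dim_\C M_\sF=1$ cannot occur. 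The remaining low case $d=1$ is covered directly by Theorem \ref{thm-deg1}, which shows that every degree $1$ foliation of local complete intersection type already has split conormal sheaf. Assembling these, $\dim_\C R_\sF=1$ forces $M_\sF=0$, hence the splitting.

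The main obstacle is exactly this degree-$0$ exclusion. A purely numerical bookkeeping with Lemma \ref{l:rao}, the vanishing ranges of Lemma \ref{L:CH}, and Serre duality is \emph{not} enough: one checks that $\dim_\C M_\sF=1$ with generator in degree $0$ remains consistent at the level of graded dimensions (the corresponding Rao module would then be concentrated in a single high degree), so the exclusion genuinely needs structural input on rank $2$ bundles rather than a dimension count. The cleanest such input is the identification of one-dimensional intermediate cohomology with twisted null correlation bundles, whose cohomological degree is rigidly tied to $c_1(N_\sF^*)=-3-d$; in low degrees one could alternatively substitute a Riemann--Roch argument that reads off $c_2(N_\sF^*)$ from the now fully known $h^1$ and $h^2$ and tests it against the admissible range of Lemma \ref{bound c2}.
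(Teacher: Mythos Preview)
Your proof is correct. The paper does not actually supply its own argument for the converse: it records the forward implication exactly as you do and then simply refers both directions to \cite[Theorem 2]{CJV}. Your proof of the converse, by contrast, is self-contained within the paper's toolkit: you use Lemma \ref{l:rao} to bound $\dim_\C M_\sF\le 1$, dispose of the case $\dim_\C M_\sF=0$ via Horrocks, and rule out $\dim_\C M_\sF=1$ by invoking the Roggero--Valabrega classification \cite[Example 1]{RV} of rank~$2$ bundles with one-dimensional intermediate cohomology. This is precisely the external input the paper itself uses later, in the proof of Lemma \ref{dim23} and in the characterization of foliations with $\dim_\C R_\sF=2$, so your argument fits naturally into the surrounding section.

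One small redundancy: your separate invocation of Theorem \ref{thm-deg1} for $d=1$ is unnecessary. Your general computation already handles it, since for $d=1$ the unique nonzero graded piece of $M_\sF$ sits in degree $(d+1)/2=1\ne0$, so $h^1(N_\sF^*)=0$ and the ``moreover'' clause of Lemma \ref{l:rao} still applies. (The only genuinely excluded case is $d=0$, and there the conormal sheaf is never locally free, as the paper notes at the start of Section \ref{sec:deg12}.)
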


The goal of this section is to consider the case $\dim_{\C} R_{\sF}=2$, and establish the proof of Main Theorem \ref{mthm4}.

First, recall that a curve $C\subset\p3$ is said to be \emph{arithmetically Buchsbaum} if its Rao module $H^1_*(\IC)$ is trivial as graded $\C[x_0,x_1,x_2,x_3]$-module, that is the multiplication map
$$ H^1(\IC(k))\stackrel{f}{\rightarrow}H^1(\IC(k+1)) $$
is zero for every $f\in H^0(\op3(1))$. A particular class of arithmetically Buchsbaum curves are those with Rao module is concentrated in a single degree, that is, there is $\delta\in\Z$ such that $h^1(\IC(k))=0$ for every $k\ne\delta$. In particular, of $\dim_\C H^1_*(\IC)=1$, then $C$ must be arithmetically Buchsbaum, so the singular scheme of a foliation by curves of global complete intersection type is always arithmetically Buchsbaum, as it was observed in \cite{CJV}.

Another class of foliations by curves with arithmetically Buchsbaum singular schemes arises as follows. Recall that the \emph{null-correlation bundle} $N$ on $\p3$ is defined by the exact sequence
\begin{equation}\label{nc-defn}
0\to \op3(-1) \to \Omega^1_{\p3} \to N \to 0 .
\end{equation}
Note that $N(k)$ is globally generated for every $k\ge1$, so that $N\otimes\Omega^1_{\p3}(k+2)$ is also globally generated for every $k\ge1$. It then follows from Ottaviani's Bertini type Theorem \cite[Teorema 2.8]{O}, that there is a monomorphism $N(-2-k)\to \Omega^1_{\p3}$ for each $k\ge1$ whose cokernel is a torsion free sheaf.

This observation guarantees the existence of foliations by curves
\begin{equation}\label{nc-normal}
0 \to N(-k-2) \to \Omega^1_{\p3} \to \IC(2k) \to 0
\end{equation}
of degree $2k+1$, $k\ge1$, whose conormal sheaves are a twisted null-correlation bundle. The equalities in Theorem \ref{P:SLocus} yield
$$ \deg(C) = (3k+1)(k+1) ~~~~{\rm and} $$
$$ p_a(C) = 5k^3 + 4k^2 -3k - 1. $$
Furthermore, since null-correlation bundles coincide with instantons bundles of charge 1, we remark that the case $k=1$ coincides with the foliations obtained in item (3) of Main Theorem \ref{classification-degree3} for $c_2(E)=1$; see also Proposition \ref{c_2=1}. 

On the other hand, note that there are no morphisms $N(-2-k)\to \Omega^1_{\p3}$ when $k\le-1$; just twist the Euler sequence
$$ 0\to\Omega^1\to\op3(-1)^{\oplus4}\to\op3\to0 $$
by $N(k+2)$ and recall that $H^0(N(k+1))=0$ when $k\le-1$.

For $k=0$, even though $\dim\Hom(N(-2),\Omega^1_{\p3})\ne0$, the curve $C$ in display \eqref{nc-normal} would have degree 1 and genus $-1$, which is impossible; so there can be no monomorphisms $N(-2)\to\Omega^1_{\p3}$ with torsion free cokernel.

\begin{Prop}\label{nc=>dim2}
Let $\sF$ be a foliation by curves on $\p3$. If $N_\sF^*$ is a twisted null-correlation bundle, then $\sing(\sF)$ is a connected, arithmetically Buchsbaum curve and $\dim_\C R_\sF=2$. In addition, for a generic foliation of this type, $\sing(\sF)$ is smooth.
\end{Prop}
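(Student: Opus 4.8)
The plan is to reduce everything to the explicit description of these foliations and to the cohomology of the null correlation bundle. Comparing first Chern classes through Theorem \ref{P:SLocus}(i), the hypothesis that $N_\sF^*$ is a twisted null correlation bundle forces $N_\sF^*\simeq N(-k-2)$ with $d=2k+1$ and $k\ge1$, so that $\sF$ is precisely one of the foliations in display \eqref{nc-normal}; write $C:=\sing(\sF)$. I would then dispose of the four assertions in turn, the only delicate one being the Buchsbaum property.

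For the dimension count and connectedness the sole input is that $N$ is an instanton of charge $1$ with natural cohomology: $H^1_*(N)$ is one-dimensional, concentrated in degree $-1$, and by Serre duality (using $N^\vee\simeq N$) the module $H^2_*(N)$ is one-dimensional as well. Twisting, $M_\sF=H^1_*(N_\sF^*)$ is one-dimensional, concentrated in degree $k+1$; in particular $h^1(N_\sF^*)=0$ since $k+1\ne0$, so the final clause of Lemma \ref{l:rao} gives $\dim_\C R_\sF=\dim_\C M_\sF+1=2$. For connectedness I compute $N_\sF^*(1-d)=N(-3k-2)$ and, by Serre duality, $h^2(N(-3k-2))=h^1(N(3k-2))=0$ because $3k-2\ne-1$; as $d=2k+1\ge2$, Proposition \ref{con} then gives that $C$ is connected.

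The core of the argument is the Buchsbaum property, which I would extract from the graded module structure rather than from a degree-by-degree inspection. Twisting \eqref{pfaff2} and passing to cohomology --- using $h^2(\Omega^1_{\p3}(s))=0$ for all $s$ by Bott's formula --- produces an exact sequence of graded $\C[x_0,x_1,x_2,x_3]$-modules
\[ H^1_*(\Omega^1_{\p3}) \xrightarrow{\ \alpha\ } R_\sF(d-1) \xrightarrow{\ \beta\ } H^2_*(N_\sF^*) \to 0. \]
Here $H^2_*(N_\sF^*)$ is one-dimensional, hence killed by every linear form, while $H^1_*(\Omega^1_{\p3})$ lives in degree $0$ only, so its image $K=\img\alpha=\ker\beta$ is concentrated in degree $0$ of $R_\sF(d-1)$. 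For a homogeneous $a$ and a linear form $f$ one has $\beta(fa)=f\beta(a)=0$, whence $fa\in K$; since $K$ occupies a single degree whereas $fa$ has degree $\deg(a)+1$, either $fa=0$ or $\deg(a)=-1$. The latter is impossible because $R_\sF(d-1)_{-1}=H^1(\IC(d-2))=h^2(N_\sF^*(-1))=h^1(N(k-1))=0$ for $k\ge1$. Thus every linear form acts as zero on $R_\sF$, i.e. $C$ is arithmetically Buchsbaum. This is the step I expect to be the main obstacle: $R_\sF$ is supported in degrees $2k$ and $3k-1$, which become adjacent precisely when $k=2$ (degrees $4$ and $5$), so the vanishing of the one potentially nontrivial multiplication map is not formal from the support alone --- it is exactly the module-homomorphism identity $\beta(fa)=f\beta(a)$, together with $h^1(\Omega^1_{\p3}(1))=0$, that closes this gap uniformly in $k$.

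Finally, for generic smoothness I would argue as in Proposition \ref{c_2=1} and in the remark following Theorem \ref{thm-leg}. The morphisms $N(-k-2)\to\Omega^1_{\p3}$ are the global sections of $N\otimes\Omega^1_{\p3}(k+2)$, which is globally generated for $k\ge1$, and $C$ is the degeneracy locus of such a morphism. Ottaviani's Bertini-type theorem \cite[Teorema 2.8]{O} then guarantees that for a generic member of the family the degeneracy locus is smooth of the expected dimension $1$, which completes the proof.
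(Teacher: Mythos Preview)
Your argument is correct. For the dimension of $R_\sF$, connectedness, and generic smoothness you follow the same route as the paper (Lemma \ref{l:rao}, Proposition \ref{con}, and Ottaviani's Bertini), so there is nothing to add there.

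The Buchsbaum step is where you diverge. The paper argues degree by degree: it first shows that $h^1(\IC(p))=0$ for $p\notin\{2k,3k-1\}$, then disposes of $k=1$ (single degree) and $k\ge3$ (non-adjacent degrees) trivially, and finally treats $k=2$ by the commutative square
\[
\xymatrix{ H^1(\Omega^1_{\p3}) \ar[r]^{\cdot f} \ar[d]^{\simeq} & H^1(\Omega^1_{\p3}(1))=0 \ar[d] \\ H^1(\IC(4)) \ar[r]^{\cdot f} & H^1(\IC(5)) }
\]
to force the lone potentially nontrivial multiplication to vanish. Your approach packages all of this into the single graded-module exact sequence $H^1_*(\Omega^1_{\p3})\xrightarrow{\alpha}R_\sF(d-1)\xrightarrow{\beta}H^2_*(N_\sF^*)\to0$: the paper's $k=2$ diagram is exactly your identity $\beta(fa)=f\beta(a)$ read in degree $0$, and your observation that $R_\sF(d-1)_{-1}=0$ replaces the separate treatment of the easy cases. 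So the two proofs rest on the same cohomological input; yours just avoids the case split and is uniform in $k$.
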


\begin{proof}
Consider a foliation by curves like the one in display \eqref{nc-normal}, so that $N_\sF^*=N(-k-2)$ for some null-correlation bundle $N$ and some $k\ge1$. It follows that $\dim_\C M_\sF=1$, and Lemma \ref{l:rao} implies that $\dim_\C R_\sF=2$ because $h^0(N_\sF^*)=h^0(N(-k-2))=0$ for every $k\ge 1$. The connectedness of $\sing(\sF)$ is a simple consequence of Proposition \ref{con} since $h^2(N_\sF^*(1-d))=h^2(N(-3k-2))=0$ for every $k\ge1$. The smoothness of $\sing(\sF)$ for a generic foliation is an immediate consequence of Ottaviani's Bertini type Theorem \cite[Teorema 2.8]{O}.

In addition, we show that $C:=\sing(\sF)$ is arithmetically Buchsbaum. First, we check that $H^1(\IC(p))=0$ for $p\ne2k,3k-1$. Indeed, the cohomology sequence associated to the exact sequence in display \eqref{nc-normal} yields
$$ H^1(\Omega^1_{\p3}(p-2k)) \to H^1(\IC(p)) \to H^2(N(p-3k-2)). $$
The left term vanishes when $p\ne 2k$, while the right one vanishes when $p\ne 3k-1$. 

If $k=1$ (so that $2k=3k-1=2$), then we have
$$ 0 \to H^1(\Omega^1_{\p3}) \to H^1(\IC(2)) \to H^1(N(-3)) \to 0, $$
thus $h^1(\IC(p))=0$ for $p\ne 2$, and $h^1(\IC(2))=2$; in particular, $C$ must be arithmetically Buchsbaum.

If $k\ge2$, then $h^1(\IC(p))=0$ for $p\ne2k,3k-1$, and $h^1(\IC(2k))=h^1(\IC(3k-1))=1$. Note that $2k$ and $3k-1$ are not consecutive when $k\ge3$, so it easily follows that $C$ must also be arithmetically Buchsbaum.

In the case $k=2$, it is enough to show that the multiplication map $f\cdot :H^1(\IC(4))\to H^1(\IC(5))$
is zero for every $f\in H^0(\op3(1))$. To see this, consider the following commutative diagram
$$ \xymatrix{
H^1(\Omega^1_{\p3}) \ar[d]^{\simeq} \ar[r]^{\cdot f} & H^1(\Omega^1_{\p3}(1))=0 \ar[d] \\
H^1(\IC(4)) \ar[r]^{\cdot f} & H^1(\IC(5))
} $$
with the left vertical arrow being an isomorphism. It clearly follows that the lower horizontal map must vanish.
\end{proof}

The converse of Proposition \ref{nc=>dim2} requires a technical lemma about rank 2 bundles on $\p3$.

\begin{Lemma}\label{dim23}
There are no rank 2 locally free sheaves $E$ on $\p3$ such that $\dim_\C H^1_*(E)=2,3$.
\end{Lemma}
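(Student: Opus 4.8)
The plan is to normalize the bundle by a twist, reduce the statement to a bound on the second Chern class, and then settle the finitely many surviving possibilities using the Hartshorne--Rao classification of rank $2$ bundles of small $c_2$ already invoked in Lemma \ref{l:reg}. Since twisting only reindexes the graded module, $\dim_\C H^1_*(E(t))=\dim_\C H^1_*(E)$ for all $t\in\Z$, so I may assume $c_1(E)\in\{-1,0\}$. If $E$ splits then $H^1_*(E)=0$ and there is nothing to prove; if $E$ does not split, the Horrocks splitting criterion gives $H^1_*(E)\neq0$, and the value $\dim_\C H^1_*(E)=1$ is permitted (it is attained by the null correlation bundle). Thus I assume $\dim_\C H^1_*(E)\in\{2,3\}$ and aim for a contradiction.

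First I would treat the case in which $E$ is stable. Evaluating Riemann--Roch at the twist $t_0=-1$ and using stability to annihilate the outer cohomology — $h^0(E(-1))=0$, since a nonzero section of $E(-1)$ produces a destabilizing subsheaf $\op3(1)\hookrightarrow E$, while $h^3(E(-1))=h^0(E^\vee(-3))=h^0(E(-c_1-3))=0$ by Serre duality — one obtains $h^1(E(-1))=-\chi(E(-1))+h^2(E(-1))\geq -\chi(E(-1))$, and a direct computation gives $-\chi(E(-1))=c_2(E)$ when $c_1(E)=0$ and $-\chi(E(-1))=c_2(E)/2$ when $c_1(E)=-1$. In either case $\dim_\C H^1_*(E)$ is bounded below by a positive multiple of $c_2(E)$, so $\dim_\C H^1_*(E)\leq 3$ forces $c_2(E)\leq 6$. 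For each normalized pair $(c_1,c_2)$ with $c_2\leq 6$, the bundle $E$ is the cohomology of one of the monads listed in \cite[Section 5.3]{HR}; reading $\dim_\C H^1_*(E)$ off the associated display (exactly as in the proof of Lemma \ref{l:reg}) yields the values $1$ for $c_2=1$, $4$ for $c_2=2$, at least $8$ for $c_2=3$, and strictly larger values for $c_2=4,5,6$. In particular $\dim_\C H^1_*(E)$ is never $2$ or $3$, which disposes of the stable case.

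Finally I would handle the non-split, non-stable case. A maximal destabilizing line subbundle $\op3(a)\hookrightarrow E$ saturates to a short exact sequence
$$0 \to \op3(a) \to E \to \IC(c_1-a) \to 0,$$
where $C$ is the zero scheme of the induced section of $E(-a)$; since $H^1(\op3(k))=H^2(\op3(k))=0$, this gives $\dim_\C H^1_*(E)=\dim_\C H^1_*(\IC)$. By the Serre correspondence the local-freeness of $E(-a)$ forces $C$ to be subcanonical, so its Rao module is self-dual up to a shift, and a self-dual graded module of dimension $2$ or $3$ has a symmetric Hilbert function concentrated in at most two degrees; substituting the resulting cohomology back into the displayed sequence and comparing with Riemann--Roch for $E(-a)$ should again contradict the local-freeness of $E$. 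I expect this last step to be the main obstacle: it amounts to ruling out a $2$- or $3$-dimensional Rao module for a \emph{subcanonical} curve, which does not follow from the stable classification and relies on the self-duality of the Rao module of a rank $2$ bundle. That self-duality itself comes from the self-dual monad structure rather than from Serre duality alone (Serre duality only interchanges $H^1_*$ and $H^2_*$), so establishing it cleanly, and confirming that the self-dual minimal free resolution cannot produce total dimension $2$ or $3$, is where the real work lies.
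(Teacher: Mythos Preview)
Your proposal has a genuine gap in the non-stable case, which you correctly flag as the main obstacle. Self-duality of $H^1_*(E)$ (equivalently, of the Rao module of the subcanonical curve $C$) is necessary but not sufficient: a graded module that is self-dual up to shift can perfectly well have total dimension $2$ (e.g.\ $\C$ sitting in two symmetric degrees, or $\C^2$ in a single degree) or $3$, so an additional structural constraint is needed. The missing ingredient is precisely the analysis of Roggero--Valabrega \cite{RV}: they prove, for \emph{any} rank $2$ locally free sheaf on $\p3$ and without a stability hypothesis, that if $H^1_*(E)$ is concentrated in one degree then $\dim_\C H^1_*(E)=1$ \cite[Example 1]{RV}, in exactly two degrees then $\dim_\C H^1_*(E)=4$ \cite[Proposition 4]{RV}, and in three or more degrees then $\dim_\C H^1_*(E)\ge4$ \cite[Theorem 2]{RV}. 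These three citations are the paper's entire proof, and they dispose of the stable and non-stable cases simultaneously.

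Even your stable case is not complete as written. For $c_1=0$ the bound $c_2\le 3$ together with the instanton cohomology does the job, but for $c_1=-1$ your Riemann--Roch inequality only yields $c_2\le 6$, and you would still have to compute $\dim_\C H^1_*(E)$ for every monad type in \cite[Section 5.3]{HR} with $c_1=-1$ and $2\le c_2\le 6$. That is more than ``reading off the display exactly as in the proof of Lemma \ref{l:reg}'': that lemma extracts a Castelnuovo--Mumford regularity bound from the monad, not the total dimension of $H^1_*$. So the stable route is viable but longer than you suggest, and once you patch the non-stable case with \cite{RV} you might as well invoke \cite{RV} from the start and drop the case-split entirely.
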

\begin{proof}
The claim is an immediate consequence of the various classification results in \cite{RV}. If $H^1_*(E)$ is concentrated in a single degree, then \cite[Example 1]{RV} implies that $\dim_\C H^1_*(E)=1$. If $H^1_*(E)$ is concentrated in two different degrees, then \cite[Proposition 4]{RV} implies that $\dim_\C H^1_*(E)=4$. Finally, $H^1_*(E)$ is concentrated in three different degrees, then \cite[Theorem 2]{RV} implies that $\dim_\C H^1_*(E)\ge4$.
\end{proof}

Our next result completes the proof of the first part of Main Theorem \ref{mthm4}.

\begin{Prop}
Let $\sF$ be a foliation by curves on $\p3$. If
$\dim_\C R_\sF=2$, then $N_\sF^*$ is a twisted null-correlation bundle.
\end{Prop}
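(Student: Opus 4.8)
The plan is to establish the converse of Proposition \ref{nc=>dim2} by showing that the hypothesis $\dim_\C R_\sF = 2$ forces the conormal sheaf $N_\sF^*$ to be a locally free rank 2 sheaf whose first cohomology module is concentrated in a single degree with dimension 1, which by the classification in \cite{RV} (specifically \cite[Example 1]{RV}) characterizes the twisted null correlation bundles. First I would observe that since $\dim_\C R_\sF = 2$ is finite, the Rao module $R_\sF = H^1_*(\IZ)$ is finite dimensional, so $Z = \sing(\sF)$ has pure dimension 1 and $\sF$ is of local complete intersection type; by Lemma \ref{n-loc free} this already guarantees that $N_\sF^*$ is locally free. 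Thus $N_\sF^*$ is a rank 2 locally free sheaf, and the module $M_\sF = H^1_*(N_\sF^*)$ is the relevant invariant to control.

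Next I would apply Lemma \ref{l:rao}, which gives the squeeze $\dim_\C M_\sF \le \dim_\C R_\sF \le \dim_\C M_\sF + 1$. Substituting $\dim_\C R_\sF = 2$ forces $\dim_\C M_\sF \in \{1,2\}$. The key step is to rule out $\dim_\C M_\sF = 2$: this is precisely where Lemma \ref{dim23} intervenes, since it asserts that no rank 2 locally free sheaf on $\p3$ can have $\dim_\C H^1_*(E) = 2$ (or $3$). Because $N_\sF^*$ is rank 2 locally free, the value $\dim_\C M_\sF = 2$ is impossible, so we must have $\dim_\C M_\sF = 1$. I would then want to pin down that this one-dimensional cohomology module, together with the Chern data forced by the degree of the foliation, identifies $N_\sF^*$ as a twist of the null correlation bundle.

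To finish, I would invoke \cite[Example 1]{RV}: a rank 2 locally free sheaf $E$ on $\p3$ whose first cohomology module $H^1_*(E)$ is one-dimensional over $\C$ (hence concentrated in a single degree) is, up to twist and normalization, the null correlation bundle. Since $\dim_\C M_\sF = 1$ means $H^1_*(N_\sF^*)$ is concentrated in a single degree with dimension 1, this classification applies directly to $E := N_\sF^*$, yielding that $N_\sF^*$ is a twisted null correlation bundle, as desired. The main obstacle I anticipate is ensuring that $\dim_\C M_\sF = 1$ genuinely corresponds to the module being concentrated in a single degree (rather than being split across degrees, which a single dimension already precludes) and that the normalization/twist bookkeeping matches the sheaf $N_\sF^*$ to a genuine null correlation bundle up to the correct shift; both points should follow cleanly from the structure theory in \cite{RV}, so the argument is essentially a matter of correctly chaining Lemma \ref{l:rao}, Lemma \ref{dim23}, and the classification. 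One should also double-check the degenerate interplay when $h^1(N_\sF^*) = 0$, since Lemma \ref{l:rao} then forces the upper equality $\dim_\C R_\sF = \dim_\C M_\sF + 1$, giving $\dim_\C M_\sF = 1$ immediately and consistently.
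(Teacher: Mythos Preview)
Your proposal is correct and follows essentially the same route as the paper: apply Lemma \ref{l:rao} to get $\dim_\C M_\sF\in\{1,2\}$, rule out $\dim_\C M_\sF=2$ via Lemma \ref{dim23}, and then invoke \cite[Example 1]{RV} to conclude that $N_\sF^*$ is a twisted null correlation bundle. Your additional remark that finiteness of $R_\sF$ forces $N_\sF^*$ to be locally free (so that Lemma \ref{dim23} applies) is a useful clarification that the paper leaves implicit.
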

\begin{proof}
By Lemma \ref{l:rao}, either $\dim_\C M_\sF=2$ or $\dim_\C M_\sF=1$. The first possibility is ruled out by Lemma \ref{dim23}, while second possibility implies, by \cite[Example 1]{RV}, that $N_\sF^*$ must be a twisted null-correlation bundle.
\end{proof}

\begin{Obs}
Lemma \ref{l:rao} and Lemma \ref{dim23} have another interesting consequence: there are no foliations by curves $\sF$ such that $\dim_\C R_\sF=3$. 

If $E$ is a stable rank 2 locally free sheaf with $c_1(E)=0$ and $c_2(E)=2$, then $E(-3)$ is the conormal sheaf of a foliation by curves $\sF$ of degree 3 (see item (3) of Main Theorem \ref{classification-degree3}) with $\dim_\C R_\sF=5$, since $\dim_\C M_\sF=4$ and $h^1(N_\sF^*)=h^1(E(-3))=0$.

However, we have not been able to find an example of a foliation by curves $\sF$ satisfying \linebreak $\dim_\C R_\sF=4$. \qed
\end{Obs}

Regarding the proof of Main Theorem \ref{mthm4}, we are left with the task of describing the moduli space of the foliations of degree $2k+1$ whose conormal sheaf is a twisted null-correlation bundle, as defined in display (\ref{nc-normal}). The strategy is to check the vanishing conditions required in the hypotheses of Lemma \ref{lemma-moduli}.

Being $N$ a null-correlation bundle, we already know that
$$ \ext^2(N_\sF^*,N_\sF^*)=\ext^2(N,N)=0. $$
Let us now consider
$$ \ext^1(N_\sF^*,\Omega^1_{\p3}) \simeq H^1(\Omega^1_{\p3} \otimes N(k+2)); $$
Tensoring the sequence in display \eqref{nc-defn} by $\Omega^1_{\p3}(k+2)$ and taking cohomology we have that
$$ H^1(\Omega^1_{\p3} \otimes N(k+2)) \simeq H^1(\Omega^1_{\p3}\otimes \Omega^1_{\p3}(k+3)), $$
since $h^p(\Omega^1_{\p3}(k+1)) = 0$ for $p=1,2$ and every $k\ge1$. Taking now the following resolution of the cotangent bundle
$$ 0\rightarrow \O(-3) \rightarrow \O(-2)^4 \rightarrow \O(-1)^6 \rightarrow \Omega^1_{\p3}(1) \rightarrow 0, $$
and tensoring it by $\Omega^1_{\p3}(k+2)$, it is straightforward to see that 
$$ h^1(\Omega^1_{\p3}\otimes \Omega^1_{\p3}(k+3))=0, $$
as required.

The next step is to compute the two terms in the formula for the dimension. Recall that
$$ \dim \ext^1(N_\sF^*,N_\sF^*) = \dim \ext^1(N,N) = 5, $$
and note that
$$ \dim \Hom(N_\sF^*,\Omega^1_{\p3}) = h^0(\Omega^1_{\p3}\otimes N(k+2)). $$
To compute the latter, consider the Euler short exact sequence tensored by $N(k+1)$, that is
$$ 0 \rightarrow \Omega^1_{\p3} \otimes N(k+2) \rightarrow N(k+1)^{\oplus 4} \rightarrow N(k+2) \rightarrow 0. $$
Recalling that 
$$ h^0(N(t)) = 2\binom{t+3}{3} - (t+2), $$
we have
$$ h^0(\Omega^1_{\p3}\otimes N(k+2)) = 
4h^0(N(k+1))-h^0(N(k+2)) = 
8 \binom{k+4}{3} - 2\binom{k+5}{3} - 3k - 8. $$
and hence the required dimension of the moduli space. Together with the considerations made in the paragraph below the proof of Lemma \ref{lemma-moduli}, the proof of Main Thorem \ref{mthm4} is finally complete.

\bigskip

At last, as a by-product of the results in this section, we close this paper by providing a full characterization of those foliations by curves $\sF$ such that $R_\sF$ is concentrated in a single degree.

\begin{Theorem}
Let $\sF$ be a foliation by curves such that $R_\sF$ is concentrated in degree $\delta$. Then
\begin{enumerate}
\item[(i)] either $\sF$ has degree $\delta+1$, $N_\sF^*$ splits as a sum of line bundles, and $\dim_\C R_\sF=1$;
\item[(ii)] or $\sF$ has degree $3$, $\delta=4$, $\dim_\C R_\sF=2$, and $N_\sF^* \simeq N(-3)$ for some null-correlation bundle $N$.
\end{enumerate}
\end{Theorem}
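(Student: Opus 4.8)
The plan is to reduce the whole statement to the numerical bound $\dim_\C R_\sF\le 2$ and then feed the outcome into the two characterizations already established, namely the splitting criterion ($N_\sF^*$ splits as a sum of line bundles if and only if $\dim_\C R_\sF=1$) and Main Theorem~\ref{mthm4} ($N_\sF^*$ is a twisted null correlation bundle if and only if $\dim_\C R_\sF=2$). First I would note that concentration of $R_\sF$ in a single degree forces $R_\sF$ to be finite dimensional, so by the discussion preceding Lemma~\ref{l:rao} the foliation $\sF$ is of local complete intersection type and $N_\sF^*$ is locally free of rank $2$; in particular $R_\sF\neq 0$.

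The core of the argument is the claim that, under our hypothesis, the module $M_\sF=H^1_*(N_\sF^*)$ is itself concentrated in a single degree. To prove this I would run the reflection coming from the twisted sequence \eqref{pfaff2}: since $H^1(\Omega^1_{\p3}(m))=0$ for $m\neq 0$ and $H^2(\Omega^1_{\p3}(m))=0$ for all $m$, the cohomology sequence of $0\to N_\sF^*(m)\to\Omega^1_{\p3}(m)\to\IC(m+d-1)\to 0$ gives $h^2(N_\sF^*(m))=h^1(\IC(m+d-1))$ for every $m\neq 0$; as $R_\sF$ lives only in degree $\delta$, this vanishes unless $m=\delta-d+1$. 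The one degree not covered is $m=0$, which I would treat separately: the $m=0$ sequence reads $H^1(\Omega^1_{\p3})\to H^1(\IC(d-1))\to H^2(N_\sF^*)\to 0$, so $H^2(N_\sF^*)$ is a quotient of $H^1(\IC(d-1))$. If $\delta\neq d-1$ then $H^1(\IC(d-1))=0$ and hence $H^2(N_\sF^*)=0$, while if $\delta=d-1$ the degree $m=0$ already coincides with the single admissible degree $\delta-d+1$. Either way $H^2_*(N_\sF^*)$, and therefore by Serre duality $M_\sF$, is concentrated in the single degree $2d-2-\delta$. At this point \cite[Example 1]{RV} gives $\dim_\C M_\sF\le 1$, and Lemma~\ref{l:rao} yields $\dim_\C R_\sF\le\dim_\C M_\sF+1\le 2$.

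With $\dim_\C R_\sF\in\{1,2\}$ in hand I would close the dichotomy. If $\dim_\C R_\sF=1$, the splitting criterion gives that $N_\sF^*$ is a sum of line bundles; then $H^2_*(N_\sF^*)=0$, so the reflection above forces $R_\sF$ to sit in degree $d-1$ with $h^1(\IC(d-1))=1$, whence $\delta=d-1$ and $\sF$ has degree $\delta+1$, which is exactly case (i). If $\dim_\C R_\sF=2$, then by Main Theorem~\ref{mthm4} the conormal sheaf is a twisted null correlation bundle $N(-k-2)$ of degree $2k+1$, and by the explicit computation in Proposition~\ref{nc=>dim2} its Rao module is supported in the degrees $2k$ and $3k-1$, which coincide only when $k=1$. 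Single-degree concentration therefore forces $k=1$, i.e.\ $\sF$ has degree $3$ and $N_\sF^*\simeq N(-3)$, with the degree $\delta$ read off as the common value of $2k$ and $3k-1$ from Proposition~\ref{nc=>dim2}; this is case (ii).

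The hard part will be the bookkeeping at the excluded degree $m=0$: the reflection identity between $R_\sF$ and $H^2_*(N_\sF^*)$ breaks precisely there because of the one-dimensional term $H^1(\Omega^1_{\p3})$, and it is exactly the case split $\delta=d-1$ versus $\delta\neq d-1$ that has to be argued with care to guarantee that $H^2_*(N_\sF^*)$ (equivalently $M_\sF$) is genuinely supported in one degree rather than two. Once that single-degree concentration is secured, everything else is a direct appeal to Lemma~\ref{l:rao}, the splitting criterion, Main Theorem~\ref{mthm4}, and Proposition~\ref{nc=>dim2}.
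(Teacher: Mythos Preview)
Your proof is correct and follows essentially the same route as the paper: both arguments use the long exact sequence of \eqref{pfaff2} (treating the twist $m=0$ separately because of $H^1(\Omega^1_{\p3})\simeq\C$) to show that $H^2_*(N_\sF^*)$ is concentrated in the single degree $\delta-d+1$, and then invoke \cite[Example~1]{RV}. The only difference is organizational---the paper splits directly on whether $h^2(N_\sF^*(\delta-d+1))$ vanishes (Horrocks versus twisted null correlation), while you first pass through Serre duality and Lemma~\ref{l:rao} to get $\dim_\C R_\sF\le 2$ and then invoke the splitting criterion and Main Theorem~\ref{mthm4}; note incidentally that carrying out your last step with $k=1$ gives $\delta=2$, so the value $\delta=4$ printed in the statement is a typo.
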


\begin{proof}
The hypothesis implies that $C:=\sing(\sF)$ is a curve; assume that $h^1(\IC(p))=0$ for every $p\ne\delta$, and $\dim_\C R_\sF=h^1(\IZ(\delta))=t\ne0$.

Consider the exact sequence
$$ 0 \to N_\sF^* \to \Omega^1 \to \IC(d-1) \to 0; $$
by hypothesis, $N_\sF^*$ is locally free. Since the map $H^1(\IC(p)) \to H^2(N_\sF^*(p-d+1))$ is surjective, we conclude that $H^2(N_\sF^*(k))=0$ for every $k\ne\delta-d+1$, and $h^2(N_\sF^*(\delta-d+1))\le t$.

If $h^2(N_\sF^*(\delta-d+1))=0$, then $N_\sF^*$ must split as a sum of line bundles by the Horrocks splitting criterion. It then follows that $h^1(N_\sF^*(k))=0$ for every $k$, so $H^1(\Omega^1(\delta-d+1)\simeq H^1(\IC(\delta))$, thus $d=\delta+1$, and $h^1(\IC(\delta))=1$.

Recall that every rank 2 locally free sheaf $E$ such that $h^2(E(k))=0$ for every $k\ne l$ and $h^2(E(l))\ne0$ must be of the form $N(-l-3)$ for some null-correlation bundle $N$ \cite[Example 1]{RV}, in which case $h^2(E(l))=1$.

Therefore, if $h^2(N_\sF^*(\delta-d+1))\ne0$, then $N_\sF^*=N(d-\delta-4)$ for some null-correlation bundle $N$; from the exact sequence
$$ H^1(N(p-\delta-3) \to H^1(\Omega^1_{\p3}(p-d+1) \to H^1(\IC(p)) \to H^2(N(p-\delta-3)) \to 0 $$
we conclude that $d=\delta+1$, so $N_\sF^*=N(-3)$, from which it follows that $d=3$ and $t=2$.
\end{proof}

\bibliographystyle{amsalpha}

\end{document}